\definecolor{plum}  {rgb}{.6,0,.6}
\definecolor{forest}  {rgb}{0,.7,0}
\definecolor{midnight}  {rgb}{0,0,.7}
\def\eps{\varepsilon}
\def\ka{\kappa_2}
\def\kb{\kappa_1}
\def\ybar{\overline{Y}}
\def\ones{\mathbbm{1}}
\def\reals{{\mathbb R}}
\def\expect{{\mathbb E}} 
\def\eg{{\em e.g.,~}}
\def\cf{{\em cf.~}}
\DeclareMathOperator{\supp}{supp}
\def\Var{\mathrm{Var}}
\DeclareMathOperator*{\argmin}{argmin}
\newtheorem{Def}{Definition}
\newtheorem{proposition}{Proposition}
\newtheorem{remark}{Remark}
\newtheorem{lemma}{Lemma}
\renewenvironment{proof}[1]{\noindent{\bf Proof of #1.}}{
 \hfill $\square$
  }
\newtheoremstyle{assumption}{6pt}{6pt}{\rm}{}{\sffamily}{ }{ }{}
\theoremstyle{assumption}
\newtheorem*{assumption*}{\assumptionnumber}
\providecommand{\assumptionnumber}{}
\newenvironment{pHyp}[2]
 {%
  \renewcommand{\assumptionnumber}{\sc Assumption #1($#2$)}%
  \begin{assumption*}%
  \protected@edef\@currentlabel{#1}%
 }
 {%
  \end{assumption*}
 }
\newcommand{\asref}[2]{\ref{#1}($#2$)}
\def\tA{\widetilde{A}}
\def\hd{d}
\def\tG{\widetilde{G}}
\def\xls{\wh{x}^{\rm OLS}}
\def\xmle{\wh{x}^{\rm MLE}}
\def\xL{\wh{x}^{\rm LAS
SO}}
\def\xW{\wh{x}^{\rm WL}}
\def\dmin{d_{\min}}
\def\dmax{d_{\max}}
\def\nObs{n}
\def\nU{m}
\newcommand{\norm}[1]{\ensuremath{\vert\!\vert #1 \vert\!\vert}}
\newcommand{\E}{\ensuremath{\mathbb{E}}}
\renewcommand{\P}{\ensuremath{\mathbb{P}}}
\newcommand{\R}{\ensuremath{\mathbb{R}}}
\newcommand{\wh}[1]{\widehat{#1}}
\newcommand{\wt}[1]{\widetilde{#1}}
\newcommand{\I}{{I}}
\newcommand{\1}{{\bf 1}}
\definecolor{darkred}{rgb}{.8,.0,.035}
\newcommand{\revision}[1]{\textcolor{black}{#1}}
\newcommand{\deq}{:=}
\numberwithin{equation}{section}
\newcommand{\T}{\top}
\def\tA{\wt{A}}
\def\tY{\wt{Y}}
\def\Nb{\mathbb{N}}
\def\Nu{\mathbb{U}}
\newcommand{\ave}[1]{\langle #1 \rangle}
\begin{document}
\title{A data-dependent weighted LASSO under Poisson noise}
\date{} 

\author{{
\sc Xin J. Hunt},\\[2pt]
SAS Institute Inc., Cary, NC USA\\[6pt]
{\sc Patricia Reynaud-Bouret}\\[2pt]
University of C\^{o}te d'Azur, CNRS, LJAD, Nice, France\\[6pt]
{\sc Vincent Rivoirard} \\[2pt]
University of Paris-Dauphine, Paris, France\\[6pt]
{\sc Laure Sansonnet} \\[2pt]
INRA - AgroParisTech, Paris, France\\[6pt]
{\sc Rebecca Willett}$^*$ \\[2pt]
University of Wisconsin-Madison, Madison, WI, USA}

\maketitle

\begin{abstract} {Sparse linear inverse problems appear in a variety
    of settings, but often the noise contaminating observations cannot
    accurately be described as bounded by or arising from a Gaussian
    distribution. Poisson observations in particular are a
    characteristic feature of several real-world
    applications. Previous work on sparse Poisson inverse problems
    encountered several limiting technical hurdles. This paper
    describes a novel alternative analysis approach for sparse Poisson
    inverse problems that (a) sidesteps the technical challenges
    present in previous work, (b) admits estimators that can readily
    be computed using off-the-shelf LASSO algorithms, and (c) hints at
    a general framework for broad classes of
    noise in sparse linear inverse problems. At the heart of this new approach lies a weighted LASSO
    estimator for which {\em data-dependent} weights are based on Poisson
    concentration inequalities. Unlike previous analyses of the
    weighted LASSO, the proposed analysis depends on conditions which
    can be checked or shown to hold in general settings with high
    probability.}\\  {{\bf Keywords:} Weighted LASSO, Poisson Noise, Compressed Sensing,
    Genetic Motifs, Photon-Limited Imaging}\\
  2000 Math Subject Classification: 60E15, 62G05, 62G08, 94A12
\end{abstract}

\section{Introduction}
Poisson noise arises in a wide variety of applications and settings,
including PET, SPECT, and pediatric or spectral CT
\cite{willett:tmi03,fesslerPCS,schmidt2009optimal} in medical imaging,
x-ray astronomy \cite{Kazik14,Kazik13,starckPCS}, genomics
\cite{laure}, network packet analysis
\cite{ElephantsMice,CounterBraids}, crime rate analysis
\cite{PoissonCrime}, and social media analysis \cite{socioscope}. In
these and other settings, observations are characterized by discrete
counts of events (\eg photons hitting a detector or packets arriving
at a network router), and our task is to infer the underlying signal
or system even when the number of observed events is very small.
Methods for solving Poisson inverse problems have been studied using a
variety of mathematical tools, with recent efforts focused on
leveraging signal sparsity 
\cite{PCS_garvesh,pcs,expFamCS,expander_PCS,fesslerPCS,starckPCS,rohban2015minimax,IPR}.

Unfortunately, the development of risk bounds for sparse Poisson
inverse problems presents some significant technical
challenges. Methods that rely on the negative Poisson log-likelihood to
measure how well an estimate fits observed data perform well in
practice but are challenging to analyze. For example, the analysis
framework considered in \cite{PCS_garvesh,pcs,expFamCS}
builds upon a coding-theoretic bound which is difficult to adapt to
many of the computationally tractable sparsity regularizers used in
the Least Absolute Shrinkage and Selection Operator (LASSO)
\cite{LASSO} or Compressed Sensing (CS) \cite{CS:candes2,CS:donoho};
those analyses have been based on impractical $\ell_0$ sparsity
regularizers.  In contrast, the standard LASSO analysis framework
easily handles a variety of regularization methods and has been
generalized in several directions \cite{BRT, bunea, Lounici, LASSO,
  van2008high, Buhlmann:2011}. However, it does not account for
Poisson noise, which is heterogeneous and dependent on the unknown
signal to be estimated.

This paper presents an alternative approach that sidesteps these
challenges. We describe a novel weighted LASSO estimator, where the
data-dependent weights used in the regularizer are based on Poisson
concentration inequalities and control for the ill-posedness of the
inverse problem and heteroscedastic noise simultaneously.  We
establish oracle inequalities and recovery error bounds for general
settings, and then explore the nuances of our approach within two
specific sparse Poisson inverse problems arising in genomics and
imaging.

\subsection{Problem formulation \label{sec:pbform}}
We observe a potentially random matrix
$A=(a_{k,l})_{k,l} \in \reals_+^{\nObs \times p}$ and conditionally on $A$, we
observe 
\begin{equation}
Y \sim \mathcal{P}(A x^*)
\label{eq:obs}
\end{equation}
 where $Y \in \reals_+^\nObs$,
$x^* \in \reals_+^p$, and where $x^*$ is sparse or compressible. The
notation $\cal{P}$ denotes the Poisson distribution, so that,
conditioned on $A$ and $x^*$, we have the likelihood
$$p(Y_k | Ax^*) = e^{-(Ax^*)_k} [(Ax^*)_k]^{Y_k} / Y_k!, \qquad k = 1,\ldots,\nObs.$$
Conditioned on $Ax^*$, the elements of $Y$ are independent.  The aim
is to recover $x^*$, the true signal of interest. The matrix $A$ corresponds to a
sensing matrix or operator which linearly projects $x^*$ into another
space before we collect Poisson observations. Often we will have $\nObs<
p$, but this inverse problem can still be challenging if $\nObs \ge p$
depending on the signal-to-noise ratio or the condition of the
operator $A$.

Because elements of $A$ are nonnegative, we cannot rely on the
standard assumption that $A^\T A$ is ``close to'' an identity
matrix. However, in many settings there is a proxy operator, denoted
$\tA$, which is amenable to sparse inverse problems and is a simple
linear transformation of the original operator $A$. A complementary
linear transformation may then be applied to $Y$ to generate proxy
observations $\tY$, and we use $\tA$ and $\tY$ in the estimators
defined below. In general, the linear transformations are
problem-dependent and should be chosen to ensure
our main assumptions (presented
in Section~\ref{sec:bounds}) are satisfied.  We provide explicit examples in
Sections~\ref{sec:bernoulli} and~\ref{sec:conv}.  Note that other
preconditioning transformations have been proposed in the
literature. See for instance \cite{PBHT, Wauthier_acomparative, HJ}
where various procedures are suggested but very different in spirit
from ours.

\subsection{Weighted LASSO estimator for Poisson inverse problems}
\revision{The basic idea of our approach is the
following. We consider two main estimation methods in this paper:
\begin{description}
\item[{\bf (Classical) LASSO estimator:}]
\begin{equation}
\xL := \argmin_{x \in \reals^p} \left\{\|\tY-\tA x\|_{2}^2 +
  \gamma d \|x\|_1 \right\},
\label{eq:unweighted}
\end{equation}
where $\gamma > 2$ is a constant and $d > 0$ is a data-dependent
scalar to be defined later.
\item[{\bf Weighted LASSO estimator:}]
\begin{equation}
\xW := \argmin_{x \in \reals^p} \left\{\|\tY-\tA x\|_{2}^2 +
  \gamma \sum_{k=1}^{p} d_k|x_k|\right\}
\label{eq:lasso}
\end{equation}
where $\gamma > 2$ is a constant and the $d_k$'s are positive and
data-dependent; they will be defined later.
Note that the estimator in \eqref{eq:lasso} can equivalently
be written as
\begin{subequations}
\begin{align} 
  \wh{z} =& \argmin_{z \in \reals^p} \left\{\|\tY-\tA D^{-1}
    z\|_{2}^2 +
    \gamma \|z\|_1\right\} \label{eq:wlopt} \\
  \xW =& D^{-1} \wh{z}
\end{align}
\end{subequations}
where $D$ is a diagonal matrix with the $k^{\rm th}$ diagonal element
equal to $d_k$. Note that the optimization problem in \eqref{eq:wlopt} can be
solved efficiently using off-the-shelf LASSO solvers. Since $z$ and $D^{-1}z$ will always have the same
support, this formulation suggests that the weighted LASSO estimator
in \eqref{eq:lasso} is essentially a data-dependent reweighing of the
columns of $\tA $.
\end{description}}

\revision{A weighted LASSO estimator similar to~\eqref{eq:lasso} has been
proposed and analyzed in past work, notably \cite{vGBZ11,BRT,Juditsky}, where
the weights are considered fixed and arbitrary. The analysis in
\cite{vGBZ11}, however, does not extend to signal-dependent noise
(as we have in Poisson noise settings). In addition, risk bounds in
that work hinge on a certain ``{\em weighted} irrepresentable
condition'' on the sensing or design matrix $\tA$ which cannot be
verified or guaranteed for the data-dependent weights we consider,
even when $\tA$ is known to satisfy criteria such as the Restricted
Eigenvalue condition \cite{BRT} or Restricted Isometry Property
\cite{RIPCandes}. 
An  analysis of the weighted LASSO estimator using standard
  LASSO bounding techniques (\cf
  \cite{Juditsky,negahban2009unified}) yields looser
  bounds than those presented below.}  

\revision{If $x^*$ has support $S^*$ of size $s:=|S^*|$ and if we choose
weights $d_1,\ldots,d_p$ satisfying
\begin{equation}
  |(\tA^\T (\tY-\tA x^*))_k| \le d_k \qquad \mbox{for } k = 1,\ldots,
  p,
\label{eq:condition}
\end{equation}
then, over an appropriate range of
values of $s$, the following risk bounds hold for the LASSO and weighted LASSO estimates
under conditions of Proposition~\ref{th:L2}:
\begin{subequations}
\begin{align}
\|\xW-x^*\|_2^2
\le&\; \frac{\rho_\gamma^2}{\eta}\sum_{k \in S^*} d_k^2
\label{eq:boundWL}\\
\|\xL-x^*\|_2^2
\le&\; \frac{\rho_\gamma^2}{\eta}sd^2,
\label{eq:boundL}
\end{align}
\label{eq:twobounds}
\end{subequations}
\noindent where $\rho_\gamma$ only depends on $\gamma$ and $\eta$ is a parameter
associated with the restricted eigenvalue condition of the sensing
matrix $\tilde{A}$ (see Proposition \ref{th:L2}). Note that the
condition in \eqref{eq:condition} is similar to the constraint in the
Dantzig selector \cite{candes2007dantzig}.  The bounds in
\eqref{eq:twobounds} highlight that if we did not have
practical constraints such as the fact that the $d_k$'s should only
depend on the data, one could take $d_k= |(\tA^\T (\tY-\tA x^*))_k|$
for the weighted LASSO, whereas for the LASSO, one could only take
$d=\max_k{|(\tA^\T (\tY-\tA x^*))_k|}$, which only leads to worse
bounds.} 

Furthermore, we consider an oracle estimator that consists of least
squares estimation on the true support $S^*$,
$$\xls \deq  I_{S^*} (\tA _{S^*})^\# \tY,$$
and show that 
$$  \sum_{k \in S^*} (\tA^\T (\tY-\tA x^*))_k^2 \lesssim \| \xls-x^*\|_2^2\lesssim \sum_{k \in S^*} (\tA^\T (\tY-\tA x^*))_k^2.$$
If each $d_k$ in the weighted LASSO estimator is close to
$|(\tA^\T (\tY-\tA x^*))_k|$, then the bounds on the weighted LASSO
are close to those of the oracle least squares estimator.

In practice the weights can only depend on the observed data. We show
two examples, a Bernoulli sensing matrix and random convolution (see Sections \ref{sec:bernoulli} and \ref{sec:conv}), in which we can
compute weights from the data (by using Poisson concentration
inequalities) such that \eqref{eq:condition} holds with high
probability {\em and} those weights are small enough to ensure risk
bounds that have a better convergence rate than LASSO estimates.

\subsection{The role of the weights}
Our approach, where the weights in our regularizer are random
variables, is similar to\,\cite{BLPR11,HRBR12,HMZ08,Zou06}.  In some
sense, the weights play the same role as the thresholds in the
estimation procedure proposed
in\,\cite{DJ94,JLL04,RBR10,RBRTM11,laure}. The role of the weights are
twofold:
\begin{itemize}
\item control of the random fluctuations of $\tA^\T\tY$
  around its mean, and 
\item compensate for the ill-posedness due to $\tA$. Note that
  ill-posedness is strengthened by the heteroscedasticity of the
  Poisson noise.
\end{itemize}

To better understand the role of the weights, let us look at a
toy example where $A$ is a diagonal matrix with decreasing
eigenvalues $\lambda_1 > \cdots > \lambda_p>0$ to which we add heteroscedastic noise. 
Here one could rephrase the direct problem as
\begin{equation}
y_k= \lambda_k x^*_k + \epsilon_k,
\label{eq:diag}
\end{equation}
where $\epsilon_k$ has zero mean and standard deviation $\sigma_k$.
This toy example is often derived via a diagonalization of some
inverse problem via the singular value decomposition of the matrix
$A$\footnote{Specifically, if the SVD of $A$ is $U\Lambda V^\top$,
  then observations of the form $y = Ax^* + \epsilon$ can be
  equivalently expressed as
  $U^\top y = \Lambda (V^\top x^*) + U^\top \epsilon$, yielding
  measurements of the form \eqref{eq:diag}.}.  The assumptions of
Poisson noise and that $x^*$ is sparse do not generally hold if we
diagonalize the problem in \eqref{eq:obs}, so the model in
\eqref{eq:diag}
should not be seen as a sketch of what can be done in general but more
as an illustration to understand the main tools that are used in the
sequel.

\revision{First of all, note that if we know the support of $x^*$, the
  least-square estimator, $\hat{x}^{LS}$, is trivial here and amounts
  to $(\hat{x}^{LS})_k= y_k/\lambda_k$ if $k$ is in $S^*$ and $0$
  anywhere else. It is then easy to see that
$$\E(\|\hat{x}^{LS}-x^*\|^2) = \sum_{k\in S^*} \frac{\sigma^2_k}{\lambda_k^2}.$$
This is our benchmark\footnote{In addition, note that in a Gaussian
  framework, the least-square estimator is also the MLE and reaches
  Cramer-Rao bound. So in a certain sense, asymptotically it is the
  smallest risk we could hope for.}, our oracle in some sense, since
it cannot be computed without knowing the support of the true signal.}

The weighted LASSO method needs two main ingredients: the linear
transformation $(\tilde{A},\tilde{Y})$ and the weights satisfying
\eqref{eq:condition}.  Let us first consider the ramifications of
setting ${\tY}=Y$, ${\tA}=A$. The quantity $\eta$ appearing in
\eqref{eq:boundWL} is then connected to the smallest eigenvalue of
$A$, and it is easy to show\footnote{In this diagonal case, it is easy
  to see that Assumption~\ref{as:RE} holds with $\kappa_2=\lambda_p$
  and $\kappa_1=0$, which leads to $\epsilon=\kappa_2$ in Proposition
  \ref{th:L2}} that $\eta=\lambda_p^4$. On the other hand, by
\eqref{eq:condition}, $d_k$ should be an upper bound on
$\lambda_k\epsilon_k$. Heuristically, $d_k$ should therefore be of the
order of $\lambda_k\sigma_k$ and the bound \eqref{eq:boundWL} is then
on the order of
$\sum_{k\in S^*} \frac{\lambda_k^2\sigma_k^2}{\lambda_p^4}$.  In
particular, even if the true support $S^*$ coincides with indices
where the $\lambda_k$'s are large, we still see our rate controlled by
the potentially large factor of $\lambda_p^{-4}$.

On the other hand, the classical inverse problem choice
${\tY}=A^{-1}Y$, ${\tA}=A^{-1}A = I_p$ gives that $\eta=1$ and that
$d_k$ should be of the order of $\sigma_k/\lambda_k$ at
least\footnote{In practice, because there will be randomness to take
  into account, guaranteeing \eqref{eq:condition} for all $k$ with
  high probability, will lead to an extra $\log(p)$ factor here, that
  may be thought as the price to pay for adaptivity with respect to
  unknown support $S^*$ (see in particular the two main
  examples).}. Therefore the upper bound \eqref{eq:boundWL} is then of
the order of $\sum_{k\in S^*} \frac{\sigma_k^2}{\lambda_k^2}$, that
is, we reach the benchmark risk of the least-square estimator.  For
the interesting case where the $\lambda_k$'s for ${k\in S^*}$'s are much
larger than $\lambda_p$, by using the weighted LASSO procedure, we
only pay for ill-posedness in the support of $x^*$, without even
knowing this support. Note also that in this set-up, if one wants to
choose a constant weight $d$, then
$d\simeq \max_k{(\sigma_k/\lambda_k)}$ and one again pays for global
ill-posedness and not just ill-posedness in the support of $x^*$.

This toy example shows us three things:
\begin{enumerate}
\item[(i)] The $d_k$'s are indeed balancing both ill-posedness and
  heteroscedasticity of the problem.
\item[(ii)] The choice of the mappings from $A$ to $\tA$ and $Y$ to
  $\tY$ impacts the rates.
\item[(iii)] The non-constant $d_k$'s allow for ``adaptivity'' with
  respect to the local ill-posedness of the problem, in terms of the
  support of $x^*$.
\end{enumerate}

\revision{Of course, this example is just a toy example and many
  simplifications occur due to the diagonalization effect; however,
  the same phenomena appear in the much more intricate examples
  (Bernoulli and Convolution) of Sections \ref{sec:bernoulli} and
  \ref{sec:conv}. To deal with these settings, we need to choose
  ${\tA}$ so that the corresponding Gram matrix
  $\tG={\tA}^\T{\tA}$ is as homogeneous as possible (meaning
  that there is no great discrepancy between maximal and minimal
  eigenvalues on restricted sets typically and informally that it
  looks as much as possible as the identity matrix up to a
  multiplicative constant) and choose ${\tY}$ such that
  ${\tA}^\T({\tY}-{\tA}x^*)$ is as small as possible, which will make
  the $d_k$'s as small as possible and therefore giving the best
  possible rates, that could not be achieved using a single constant
  weight $d$.}  This choice in particular will enable us to get rates
consistent with the minimax rates derived in \cite{PCS_garvesh} in a
slightly different framework.

\subsection{Organization of the paper}
Section~\ref{sec:bounds} describes general oracle inequalities,
recovery rate guarantees, and support recovery bounds for the three
estimators described above, given weights which satisfy
\eqref{eq:condition}. We then describe a general framework for finding
such weights using the observed data in Section~\ref{sec:weights}. We
next describe exact weights and resulting risk bounds for two specific 
Poisson inverse problems: (a) Poisson compressed sensing using a
Bernoulli sensing matrix, which models certain optical imaging systems
such as \cite{riceCamera}, and (b) a ill-posed Poisson deconvolution
problem arising in genetic motif analysis, building upon the
formulation described in \cite{laure}. We conclude with
simulation-based verification of our derived rates.

\subsection{Notation}
To provide readable results, we use the following notation in the
sequel: $a \lesssim_\gamma b$ if there exists a positive constant
$c_\gamma$ only depending on $\gamma$ such that $a \leq c_\gamma b$.
Similarly, $a \gtrsim_\gamma b$ means $b \lesssim_\gamma a$ and
$a \simeq_\gamma b$ means both $a \lesssim_\gamma b$ and
$a \gtrsim_\gamma b$. If there is no index $\gamma$, it just means
that the constants are absolute. In the proofs, the notation $\square$
represents an absolute constant that may change from line to line.

\section{Theoretical performance bounds for the weighted LASSO}
\label{sec:bounds}

In this section, we establish recovery error bounds for the
  proposed weighted LASSO estimator. The underlying proof techniques
  closely follow those described in
  \cite{negahban2009unified,hastie2015statistical} and elsewhere, but
  have been adapted to account for the weighted-$\ell_1$
  regularizer. Without this adaptation, directly applying the theory
  of \cite{negahban2009unified} to the weighted LASSO estimator yields
  rates that are equivalent to those of the standard LASSO estimator,
  modulo a constant factor. As shown below, our modified analysis
  yields tighter bounds that better reflects the role of the weights,
  as discussed in detail in the examples in the following sections.
  We state our bounds in this section and the associated proofs in the
  appendix for completeness and clarity. The bounds in this section do
  not depend on the noise distribution and can be used regardless of
  the underlying noise.  They rely upon the following two main
assumptions, both of which are proved to be met with high probability
in two key examples in the next sections.

The first assumption is known as the {\em Restricted Eigenvalue
  Condition} (see
\cite{negahban2009unified,raskutti2010restricted,li2016minimax}):
\begin{pHyp}{RE}{\kb,\ka}\label{as:RE}
There exist $\ka,\kb > 0$ such that 
\begin{equation}
\|\tA x\|_2 \ge \ka \|x\|_2 - \kb \|x\|_1 \qquad \forall \,
x \in \reals^p.
\label{eq:rec}  
\end{equation}
\end{pHyp}
This condition is weaker than the verifiable condition in
\cite{Juditsky}.  Our other key assumption dictates the
necessary relationship between the weights used to regularize the
estimates $\xW$ and $\xL$.
\begin{pHyp}{Weights}{\{d_k\}_k}\label{as:dk}
For $k = 1,\ldots,p$,
\begin{equation}
 |\tA ^\T (\tY-\tA x^*)|_k \leq d_k.  \label{controledk}
\end{equation}
\end{pHyp}

In the sequel, we use the following definitions:
\begin{align}
\dmax \deq \max_{k \in \{1,\dots,p\}}d_k, \qquad \dmin \deq
  \min_{k \in \{1,\ldots,p\}} d_k, \quad  \mbox{and} \quad 
\rho_{\gamma} := \gamma\frac{\gamma+2}{\gamma-2}. 
\label{eq:rho}
\end{align}
For any vector $z \in \reals^p$ and
  $S \subseteq \{1,\ldots,p\}$, let $z_S \in \reals^p$ be defined via
$(z_S)_i = \begin{cases} z_i, & i \in S\\
0,& \mbox{otherwise}
\end{cases}$.
 Further recall that $D$ is a diagonal matrix with the $k^{\rm th}$ diagonal element
equal to $d_k$.
Because $D$ is diagonal, note that for any vector $z\in\R^p$ and  any
set $S \subseteq \{1,\ldots,p\}$, $Dz_S = (Dz)_S$.

\begin{proposition}\label{th:L2}
  Fix $\eps > 0$. If $\gamma>2$ and
  Assumptions~\asref{as:dk}{\{d_k\}_k} and~\asref{as:RE}{\kb,\ka} are
  satisfied, then there exists a universal constant $c>0$ such that
  for any set $S \subseteq \{1,\ldots,p\}$ for which
\begin{equation}\label{mainc}
\|d_S\|_2 \le \dmin \frac{\ka-\eps}{\kb \rho_\gamma},
\end{equation}
for $\eps>0$, the weighted LASSO estimator satisfies
\begin{equation}\label{mainr}
\| x^* - \xW \|_2 \leq c\left(\frac{\rho_{\gamma}}{\eps^2}\|d_S\|_2 + \sqrt{\frac{\rho_{\gamma}}{\eps^2}}\|Dx^*_{S^c}\|_1^{1/2}
  +  \frac{\rho_{\gamma}\kb}{\eps\dmin}\|Dx^*_{S^c}\|_1\right)
\end{equation}
Furthermore, for any set $S \subseteq \{1,\ldots,p\}$ with $s = |S|$ satisfying
\begin{equation}\sqrt{s} \le \frac{\ka - \eps}{\kb \rho_\gamma},
\label{mainc2}
\end{equation}
for $\eps>0$, the LASSO estimator satisfies
\begin{align}\label{mainrbis}
\| x^* - \xL \|_2 \le &c\left(\frac{\rho_{\gamma}}{\eps^2} d \sqrt{s} + \sqrt{\frac{\rho_{\gamma} d}{\eps^2}}\|x^*_{S^c}\|_1^{1/2}
  +  \frac{\rho_{\gamma}\kb}{\eps}\|x^*_{S^c}\|_1\right).
\end{align}
\end{proposition}

Note that Inequality \eqref{mainr} can be expressed simply in the
\revision{case where the vector $x^*$ is $s$-sparse with support $S^*$ where $s=|S^*|$ and $S^*$ satisfies \eqref{mainc}:}
\begin{equation}
\| x^* - \xW \|_2 \leq c\frac{\rho_{\gamma}}{\eps^2}\|d_{S^*}\|_2.
\label{eq:sparseRate}
\end{equation}
This result clearly shows the importance of having weights as small as
possible but large enough so that Assumption~(\ref{as:dk}) is
satisfied and not too heterogeneous so that \eqref{mainc} is
true. This trade-off for weights choice will be illustrated in
Sections \ref{sec:bernoulli} and \ref{sec:conv}.  Note that 
\eqref{mainc} is equivalent to 
$$\sqrt{s} \le \frac{\|d_S\|_2}{\dmin} \le \frac{\ka - \eps}{\kb
  \rho_{\gamma}}$$
with $s = |S|$, so that this condition is both a sparsity condition
and a limit on the heterogeneity of the weights.

\section{Choosing data-dependent weights}
\label{sec:weights}
In general, choosing $d_k$'s to ensure that 
Assumption~\asref{as:dk}{\{d_k\}_k} is satisfied is highly
problem-dependent, and we give two explicit examples in the following
two sections. In this section we present the general strategy we adopt
for choosing the weights. The weights $d_k$ are ideally chosen so that for all $k$
\begin{equation}
|(\tA^\T (\tY - \tA x^*))_k|\leq d_k.
\label{eq:unbiased}
\end{equation}
We describe a data-dependent strategy for choosing weights such that
this condition holds with high probability.  The modifications $\tY$
and $\tA$ of $Y$ and $A$ that we have in mind are linear, therefore
one can generally rewrite for each $k$,
$$(\tA^\T (\tY - \tA x^*))_k = R_k^\T (Y-Ax^*) + r_k(A,x^*),$$ for
some vector $R_k \in \reals^n$ which depends on $k$ and $A$, and for some
residual term $r_k(A,x^*)$, also depending on $k$ and $A$. The
transformations are chosen such that $d_k$ is small. With the above
decomposition, the first term $R_k^\T (Y-Ax^*)$ is naturally of null
conditional expectation given $A$ and therefore of zero mean. The
$\tY$ are usually chosen such that $r_k(A,x^*)$ is also of zero mean
which globally guarantees that $\E[(\tA^\T (\tY - \tA x^*))_k]=0$.

In the two following examples, the term $r_k(A,x^*)$ is either mainly
negligible with respect to $ R_k^\T (Y-Ax^*)$ (Bernoulli case,
Section~\ref{sec:bernoulli}) or even identically zero (convolution
case, Section~\ref{sec:conv}).  Therefore the weights are mainly given
by concentration formulas on quantities of the form $R^\T (Y-Ax^*)$
as given by the following Lemma.

\begin{lemma}
\label{concPoisGen}
For all vectors $R=(R_\ell)_{\ell=1,\ldots,\nObs} \in \reals^\nObs$, eventually depending on $A$, let $R_2:=(R_\ell^2)_{\ell=1,\ldots,\nObs}$. Then
the following inequality holds for all $\theta>0$,
\begin{equation}\label{unC}
\P\left( R^\T Y\geq  R^\T A x^*+ \sqrt{2v\theta}+\frac{b\theta}{3}\,\Big|\,
  A\right) \leq e^{-\theta},
  \end{equation} 
with
$$v = R_2^\T \E(Y|A)=  R_2^\T A x^*$$
and
$$b =\norm{R}_\infty.$$
Moreover
\begin{equation}\label{deuxC}\P\left( |R^\T Y -R^\T A x^*|\geq\sqrt{2v\theta}+\frac{b\theta}{3}\,\Big|\,
  A\right) \leq 2 e^{-\theta},
 \end{equation}  
\begin{equation}\label{troisC} \P\left( v \geq \left(\sqrt{\frac{b^2 \theta }{2}}+\sqrt{\frac{5b^2\theta}{6}+ R_2^\T Y }\right)^2\,\Big|\,
  A\right) \leq  e^{-\theta}
  \end{equation}
  and
\begin{equation}\label{quatreC}\P\left( |R^\T Y -R^\T A x^*|\geq \left(\sqrt{\frac{b^2 \theta }{2}}+\sqrt{\frac{5b^2\theta}{6}+ R_2^\T Y }\right) \sqrt{2 \theta}+\frac{b\theta}{3}\,\Big|\,
  A\right) \leq 3 e^{-\theta}.
 \end{equation}
\end{lemma}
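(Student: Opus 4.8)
The plan is to read $S \deq R^\T Y - R^\T A x^*$ as a centered sum of the conditionally independent weighted Poisson variables $R_\ell Y_\ell$, to obtain the one-sided bound \eqref{unC} by a Cram\'er--Chernoff argument based on a Bernstein-type control of its conditional log-Laplace transform, and then to deduce \eqref{deuxC}, \eqref{troisC} and \eqref{quatreC} from \eqref{unC} by, respectively, a sign flip, a self-bounding argument, and union bounds. Throughout I work conditionally on $A$, so that $R$ is deterministic and $Y_\ell \sim \mathcal{P}(\mu_\ell)$ with $\mu_\ell \deq (Ax^*)_\ell$ are independent.

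For \eqref{unC}, I first compute, for $t>0$,
$$\log \E\!\left(e^{tS}\,\middle|\,A\right) = \sum_{\ell=1}^{\nObs} \mu_\ell\left(e^{tR_\ell}-1-tR_\ell\right).$$
I then bound each summand uniformly using the elementary facts $e^u-1-u \le \tfrac{u^2/2}{1-u/3}$ for $0\le u<3$ and $e^u-1-u\le u^2/2$ for $u\le 0$: writing $b=\norm{R}_\infty$ and restricting to $0<t<3/b$, one has $(1-tb/3)^{-1}\ge 1$ and $1-tR_\ell/3 \ge 1-tb/3>0$ for either sign of $R_\ell$, so that $e^{tR_\ell}-1-tR_\ell \le \tfrac{t^2R_\ell^2/2}{1-tb/3}$ in all cases. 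Summing yields the Bernstein log-Laplace bound
$$\log \E\!\left(e^{tS}\,\middle|\,A\right) \le \frac{v\,t^2/2}{1-tb/3}, \qquad v = R_2^\T A x^* = \sum_{\ell=1}^{\nObs}\mu_\ell R_\ell^2,$$
valid for $0<t<3/b$. This is exactly the Bernstein form with variance factor $v$ and scale $b/3$, so the standard Legendre-transform inversion (optimizing $tx-\psi(t)$ and inverting the resulting rate function) gives $\P(S\ge \sqrt{2v\theta}+b\theta/3\mid A)\le e^{-\theta}$, i.e.\ \eqref{unC}. Inequality \eqref{deuxC} then follows by applying \eqref{unC} to $-R$, which leaves both $v$ and $b$ unchanged, and by a union bound over the two tails.

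For \eqref{troisC}, I would apply the lower-tail analogue of the above to the empirical quantity $W\deq R_2^\T Y=\sum_\ell R_\ell^2 Y_\ell$, which is itself a nonnegatively weighted Poisson sum with conditional mean $v$. Its associated variance factor is $\sum_\ell R_\ell^4 \mu_\ell \le b^2 v$ (since $R_\ell^2\le b^2$) and its sup-norm weight is $b^2$, so with probability at least $1-e^{-\theta}$ one gets $v \le W + b\sqrt{2v\theta} + b^2\theta/3$. Treating this as a quadratic inequality in $\sqrt{v}$ and solving it reproduces exactly $v \le \bigl(\sqrt{b^2\theta/2}+\sqrt{5b^2\theta/6+W}\bigr)^2$, which is \eqref{troisC}. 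Finally, \eqref{quatreC} is the conjunction of \eqref{deuxC} and \eqref{troisC}: on the intersection of their favorable events one replaces the unobservable $\sqrt{v}$ in \eqref{deuxC} by its data-dependent upper bound from \eqref{troisC}, and the failure probability is at most $2e^{-\theta}+e^{-\theta}=3e^{-\theta}$.

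The main obstacle is the base estimate \eqref{unC}. Because the weights $R_\ell$ may have either sign and the Poisson variables are unbounded, I cannot invoke a bounded-difference or a classically stated (bounded) Bernstein inequality; the argument must instead go through the exact Poisson log-Laplace transform and verify that the elementary bounds on $e^u-1-u$ transfer to a single pair $(v,b)$ uniformly over $\ell$ and both signs of $R_\ell$. Once \eqref{unC} is in hand, the remaining steps are essentially bookkeeping: the sign flip for \eqref{deuxC}, the variance domination $\sum_\ell R_\ell^4\mu_\ell\le b^2 v$ plus a quadratic solve for \eqref{troisC}, and union bounds for \eqref{quatreC}.
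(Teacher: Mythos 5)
Your proposal is correct and follows essentially the same route as the paper: a Chernoff--Bernstein bound on the conditional Poisson log-Laplace transform via $e^u-1-u\le \tfrac{u^2/2}{1-u/3}$ for \eqref{unC}, the sign flip $R\mapsto -R$ for \eqref{deuxC}, an application of the same bound to $-R_2$ (with the variance domination $\sum_\ell R_\ell^4\mu_\ell\le b^2 v$ and the quadratic solve in $\sqrt{v}$) for \eqref{troisC}, and a union bound for \eqref{quatreC}. No gaps; your treatment of the two signs of $R_\ell$ is in fact slightly more explicit than the paper's.
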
 

Equations \eqref{unC} and \eqref{deuxC} give the main order of
magnitude for $R^\T (Y-Ax^*)$ with high probability but are not
sufficient for our purpose since $v$ still depends on the unknown
$x^*$. That is why Equation \eqref{troisC} provides an estimated
upper-bound for $v$ with high probability. Equation \eqref{quatreC} is
therefore our main ingredient for giving observable $d_k$'s that
satisfy Assumption~\asref{as:dk}{\{d_k\}_k}. Note that, depending on
$A$, one may also find more particular way to define those weights, in
particular constant ones. This is illustrated in the two following
examples.

\section{\revision{Case Study}: Photon-limited compressive imaging}
\label{sec:bernoulli}
A widely-studied compressed sensing measurement matrix is the
Bernoulli or Rademacher ensemble, in which each element of $A$ is
drawn iid from a Bernoulli($q$) distribution for some $q \in
(0,1)$.
(Typically, $q=1/2$.) In fact, the celebrated Rice
single-pixel camera \cite{riceCamera} uses exactly this model to
position the micromirror array for each projective measurement. This
sensing matrix model has also been studied in previous work on Poisson
compressed sensing ({\em cf.} \cite{PCS_garvesh,pcs}). In this
section, we consider our proposed weighted LASSO estimator for this
sensing matrix. \revision{Because our focus is a comparison of the
  classical and weighted LASSO estimators, we focus here on $s$-sparse
  $x^*$, which is consistent with previous theoretical analyses of
  this problem. Note, however, that our results extend trivially to
  the non-sparse setting.}
  
\subsection{Rescaling and recentering}
Our first task is to define the surrogate design matrix $\tA$ and
surrogate observations $\tY$.  In this set-up, one can easily see that
the matrix
\begin{equation}\label{tildeA}
{\tA}= \frac{A}{\sqrt{n q(1-q)}}-\frac{q  \ones_{\nObs \times 1} \ones_{p \times
  1}^\T}{\sqrt{nq(1-q)}}
\end{equation}
is a scaled and shifted version of the original $A$ and satisfies $\E({\tA}^\T{\tA})=I_p$ (see Appendix), which will help us
to ensure that \revision{Assumption \ref{as:RE} holds}.  To make $d_k$
as small as possible while still satisfying Assumption
\asref{as:dk}{\{d_k\}_k}, we would like to have
$\E({\tA}^\T({\tY}-{\tA}x^*))=0$, as stated previously. Computations given in the appendix show that it is sufficient to take
  \begin{equation}\label{tildeY}
  {\tY}= \frac{1}{(n-1)\sqrt{nq(1-q)}} (nY-\sum_{\ell=1}^\nObs Y_\ell  \ones_{\nObs \times 1}).
  \end{equation}

\subsection{Assumption~\ref{as:RE} holds with high probability.}
\begin{proposition}\label{REnewCS}
There exist positive absolute constants $c^{\prime}$ and $c^{\prime \prime}$ such that with probability larger than $1-c^{\prime}\exp(-c^{\prime \prime}n)$, Assumption~$\ref{as:RE}({\kb,\ka})$ holds with
$$\kb=\frac{c}{q(1-q)}\sqrt{\frac{\log p}{n}}\quad\mbox{and}\quad\ka=\frac{1}{4},$$
where $c$ is an absolute positive constant.
\end{proposition}

In the sequel, we focus on the interesting case when $q$ is small and tends to $0$ as $n$ and $p$ grow. Therefore we assume that
\begin{equation}\label{condq}
0<q\leq \frac{1}{2}\leq 1-q<1.
\end{equation}
In addition to $q$ being small, the orders of magnitude  that we have derived  only hold for
\begin{equation}\label{range-ber}
q\gtrsim \sqrt{\frac{\log(p)}{n}},
\end{equation}
which implies in particular that
\begin{equation}\label{range-pn}
\log(p)\lesssim n.
\end{equation}
In particular, $q$ can still tend to $0$ with $n$ and $p$ but cannot be too small, as long as 
$
\log(p)\ll n.$

\subsection{Choice of the weights}
We now discuss the rates obtained by applying Proposition \ref{th:L2} for constant and non-constant weights that are found thanks to the machinery described in Section~\ref{sec:weights}.

\subsubsection{Definition of constant weight and rates for the estimate $\xL$}
For all  $k=1,\ldots,p$,
define
the vector $V_k \in \reals^n$ so that the $\ell^{\rm th}$ element is
\begin{equation}
V_{k,\ell}:=\left(\frac{na_{\ell,k}-\sum_{\ell'=1}^\nObs
    a_{\ell',k}}{n(n-1)q(1-q)}\right)^2
\label{eq:Vk}
\end{equation}
and let $A_k \in \reals^n$ denote the $k^{\rm th}$ column of $A$. 

Let $$W=\max_{u,k\in\{1,\ldots,p\}} \ave{A_u,V_k} $$ and
$$\hat{N}=\frac{1}{nq-\sqrt{6nq(1-q)\log(p)}-\max(q,1-q)\log(p)} \left(\sqrt{\frac{3\log(p) }{2}}+\sqrt{\frac{5\log(p)}{2}+ \sum_{\ell=1}^n Y_\ell }\right)^2
$$
be an estimator of $\norm{x^*}_1$.  Then one can choose constant
weight defined by
\begin{equation}
d =  \sqrt{\hat{N} 6W\log(p)}   +
\frac{\log(p)}{(n-1)q(1-q)}+ c \left(\frac{3\log(p)}{n} +
  \frac{9\max(q^2,(1-q)^2)}{n^2q(1-q)}\log(p)^2\right)  \hat{N},
\label{eq:berd}
\end{equation}
where $c$ is an absolute constant (see the proof of
Proposition~\ref{dBern}, $c=126$ works if $n\geq 20$). One can prove
that this satisfies Assumption \asref{as:dk}{d} except on an event of
probability of order $1/p$ as long as $p\geq 2$ (see
Proposition~\ref{dBern}).

If we want to give an explicit rate for the corresponding classical LASSO estimator, we need to find the order of magnitude of $d$.
As  shown in
Proposition~\ref{bounddBern}, in the range \eqref{range-ber},
we have
$$ d \simeq \sqrt{\frac{\log(p) \norm{x^*}_1}{nq}} +
  \frac{\log(p)\norm{x^*}_1}{n}+
  \frac{\log(p)}{nq}.$$

We now apply Proposition \ref{th:L2} with $\varepsilon=1/8$ and  a fixed $\gamma>2$. It is easy to see that if the size of the true support $s $ satisfies
\begin{equation}\label{sparse-ber-L}
s  \ll \frac{nq^2}{\log p},
\end{equation}
then \eqref{mainc} holds. Hence, all assumptions of Proposition~\ref{th:L2} are satisfied and 
we have that, except on an event of probability of order $1/p+e^{-c"n}$,
\begin{equation}
\| \xL-x^*\|_2^2 \lesssim_\gamma
\frac{\log p}{n}\left(
\frac{\|x^*\|_1 s }{q} +  
\frac{\|x^*\|^2_1 s  \log p}{n} + \frac{s  \log p}{n q^2}
\right).
\label{eq:cslasso}
\end{equation}
In particular, in the range
\begin{equation}\label{rangex}
1\lesssim \|x^*\|_1 \lesssim n/\log(p),
\end{equation}
\revision{the first term of \eqref{eq:cslasso} dominates and we have
  $\| \xL-x^*\|_2^2 \lesssim_\gamma \frac{s  \|x^*\|_1 \log p }{nq}$.}

\subsubsection{Definition of non-constant weights and rates for the estimate $\xW$}
One can choose the non-constant weights defined by
\begin{align} d_k =   \sqrt{6\log(p)}  \left(\sqrt{\frac{3\log(p)
  }{2(n-1)^2q^2(1-q)^2}}+\sqrt{\frac{5\log(p)}{2(n-1)^2q^2(1-q)^2}+
  \ave{V_k, Y} }\right) \nonumber \\+\frac{\log(p)}{(n-1)q(1-q)}+c
  \left(\frac{3\log(p)}{n} +
  \frac{9\max(q^2,(1-q)^2)}{n^2q(1-q)}\log(p)^2\right)  \hat{N},
\label{eq:berdk}
\end{align}
where $V_k$ is defined in \eqref{eq:Vk} and $c$ is an absolute
constant (see the proof of Proposition~\ref{dkBern}, $c=126$ works if
$n\geq 20$). They also satisfy Assumption~\asref{as:dk}{\hd} except on
an event of probability of order $1/p$ as long as $p\geq 2$ (see
Proposition~\ref{dkBern}).  Furthermore, as shown in
Proposition~\ref{bounddkBern}, in the range \eqref{range-ber}, we have
the following order of magnitude
\begin{align*} 
d_k \simeq \sqrt{\log(p)\left[\frac{x^*_k}{nq}+\frac{\sum_{u\not = k} x^*_u}{n}\right]} +  \frac{\log(p)\norm{x^*}_1}{n}+   \frac{\log(p)}{nq}.
\end{align*}
For $S^*$ the support of $x^*$, note that
$$\frac{\|d_{S^*}\|_2}{d_{\min}} \lesssim \sqrt{s  +\frac{1}{q}}.$$
Therefore \eqref{mainc} is satisfied as soon as 
\begin{equation}\label{sparse-ber-W}
s  \ll \frac{nq^2}{\log p} \mbox{ and } q \gg \left(\frac{\log(p)}{n}\right)^{1/3}.
\end{equation}
The first part is exactly \eqref{sparse-ber-L}, and the second part is
slightly stronger than \eqref{range-ber}. However, $q$ can still tends
to $0$ with $p$ and $n$ as long as $\log(p)\ll n$. 

Under \eqref{sparse-ber-W}, we can now apply Proposition \ref{th:L2}
as before: except on an event of probability of order
$1/p+e^{-c^{\prime \prime}n}$,
\begin{equation}\| x^*-\xW\|_2^2\lesssim_\gamma
\frac{\log p}{n}
\left(  
\frac{\|x^*\|_1}{q} + \|x^*\|_1 s + 
\frac{\|x^*\|^2_1 s  \log p}{n} + \frac{s  \log p}{n q^2 }
\right).
\label{eq:CSWL}
\end{equation}
In particular, in the range 
$1\lesssim \|x^*\|_1 \lesssim n/\log(p)$  and \eqref{sparse-ber-W}, 
\revision{the first two terms of \eqref{eq:CSWL} dominates and we have
  $\| \xW-x^*\|_2^2 \lesssim_\gamma \frac{\log p \|x^*\|_1
    (s +1/q)}{n}$.
Therefore, one can form the ratio of the upper bounds derived in the
classical setting and the one obtained here, leading to
$$ 
\frac{\frac{\log p}{n}
\left( 
\frac{\|x^*\|_1}{q} + \|x^*\|_1 s \right)}{\frac{\log p}{n}
\left(\|x^*\|_1 s /q \right)}  \lesssim \frac{1}{s }+q,$$
which is much smaller than $1$. The upper bound on the weighted LASSO
estimator is therefore better in this range. }

\subsection{Comparison with the oracle least squares estimate}
\label{sec:CSOLS}
 Let  $S^* \deq \supp(x^*)$ denote the true signal support; we consider
  the {\it oracle least squares estimate} on~$S^*$:
\begin{align}
\xls \deq & I_{S^*} (\tA _{S^*})^\# \tY
\label{eq:LS}
\end{align}
where $\tA _{S^*} \in \reals^{n \times s}$ is a submatrix of $\tA $
with columns of $\tA _{S^*}$ equal to the columns of $\tA $ on support
set $S^*$, $(\tA _{S^*})^\#$ standing for its pseudo-inverse and
$I_{S^*} \in \reals^{p \times s}$ is a submatrix of the identity
matrix $I_p$ with columns of $I_{S^*}$ equal to the columns of $I_p$
on support set $S^*$.  Note that this estimator functions as an
oracle, since in general the support set $S^*$ is unknown. The oracle
least squares estimate estimate satisfies the following result:
\begin{proposition}\label{prop:OLS}
  Assume that $S^* = \supp(x^*)$ is known and $s  = |S^*|$.  Under \eqref{sparse-ber-L},  $\xls$, the least
  square estimate of $x^*$ on $S^*$ satisfies
$$\sum_{k \in S^*} (\tA^\T (\tY-\tA
x^*))_k^2\lesssim \| \xls-x^*\|_2^2 \lesssim \sum_{k \in S^*} (\tA^\T
(\tY-\tA x^*))_k^2$$
 with probability exceeding
$1-C/p$ for a universal positive constant $C$.
\end{proposition}
It's quite natural to compare our LASSO estimates with the OLS
estimate. 
In the same spirit
as \cite{candes2007dantzig}, the OLS estimate can then be viewed as a
benchmark and, in some sense, the previous result shows the optimality
of the weighted LASSO estimator $\xW$.
Specifically, recall from Proposition~\ref{th:L2} and \eqref{eq:sparseRate}, in the sparse setting
$\| x^* - \xW \|_2^2 \lesssim_\gamma \sum_{k \in S^*} d_k^2$ and $\|
x^* - \xL \|_2^2 \lesssim_\gamma sd^2$. By choosing each $d_k$ to be
as close as possible to $(\tA^\T
(\tY-\tA x^*))_k$, we ensure that the risk of the weighted LASSO
estimator $\xW$  is as close as possible to that of the OLS
estimate. In contrast, since $d$ is a bound on $\max_k (\tA^\T
(\tY-\tA x^*))_k$, the classical LASSO estimator $\xL$ may exhibit
higher errors than the OLS.

\subsection{Comparison with previous rate results for Poisson CS}
\revision{We
note that the rates above are similar in spirit to the rates derived in a
similar setting for estimators based on minimizing a regularized
negative log-likelihood. Specifically, we compare our rates to those
derived in \cite{PCS_garvesh}. In that work, the authors consider
observations 
$$Z \sim {\cal P}(T B D \theta^*)$$
where $T>0$ is a scalar parameter controlling
the expected total number of photons collected, $B$ is a normalized, non-negative
sensing matrix reflecting the physical constraints of optical systems,
$D$ is an orthonormal sparsifying basis, and $\theta^*$ is an $s $-sparse
set of basis coefficients. \cite{PCS_garvesh} focused on the case
where $\|x^*\|_1 = \|D \theta^*\|_1 = 1$ so that $T$ alone reflected
the overall signal strength, and where one of the basis vectors in $D$
corresponded to a constant vector. The authors noted that the squared
error of their penalized likelihood estimator scaled like $\frac{s 
  \log p}{T}$ for $T$ sufficiently large. }

\revision{To compare that work with ours, we assume that the results
  in \cite{PCS_garvesh} generalize to the case where the sparsifying
  basis $D$ is the identity matrix. We also focus on the case where
$q=1/2$ and
  the sensing matrix $B$ is generated as follows:\footnote{In
    \cite{PCS_garvesh}, $B_{i,j} = \frac{\tA_{i,j}}{4\sqrt{n}} + \frac{3}{4n}$; that variation ensures Poisson
    intensities are bounded away from zero to facilitate analysis of
    the Poisson log-likelihood. We assume the work of
    \cite{PCS_garvesh} generalizes to the setting described in the text.}
$$B_{i,j} = \frac{\tA_{i,j}}{2\sqrt{n}} + \frac{1}{2n}, \qquad i =
1,\ldots,n;\, j = 1,\ldots,p.$$ 
From here we can see that if $T=n$, then $TB = A$. As a result, the
rate of $\frac{s  \log p}{T}$ in \cite{PCS_garvesh} is on the same
order as the bound \eqref{eq:CSWL},
$\frac{\|x^*\|_1(\frac{1}{q}+s )\log p}{n}$, when $\|x^*\|_1 = 1$, $q=1/2$, and
$n \gtrsim \log p$. 
 }

\section{Case study: Poisson random convolution in genomics}
\label{sec:conv}

This set-up is much less widely studied than the previous one, but it
was the motivating problem at the origin of the present
article. Indeed, this specific random convolution model is a toy model
for bivariate Hawkes models or more precise Poissonian interaction
functions \cite{HRBR12,laure,STM}. Those point processes models have
been used in neuroscience (spike train analysis) to model excitation
from on neuron on another one or in genomics to model distance
interaction along the DNA between motifs or occurrences of any kind of
Transcription Regulatory Elements (TRE) \cite{GS,CSWH}. All the
methods proposed in those articles assume that there is a finite
``horizon'' after which no interaction is possible (i.e. the support
of the interaction function is finite and much smaller that the total
length of the data) and so the corresponding inverse problem is
well-posed.  However, and in particular in genomics, it is not at all
clear that such a horizon exists. Indeed, it is usually assumed that
the interaction stops after 10000 bases. However, the 3D structure of
DNA makes long-range ``linear distances'' on the DNA strand
potentially irrelevant. An important question receiving increased
attention is whether, if one had access to real 3D positions (and
there is ongoing work to measure these positions), would it be
possible to estimate the interaction functions without any assumption
on its support?

The problem described here is a clear simplification of this complex
problem, which in fact depends on the DNA fold: we restrict ourselves
to the case where the DNA strand is just modeled by a circle (which is
topologically reasonable for certain genomes) and the observations of
TREs are binned. We wish to understand whether long range dependencies
can be recovered once sparsity is assumed.  Specifically, we formulate
this problem as a sparse Poisson deconvolution problem.

Our model considers the interdependencies between two different
kind of locations; for example, the first might be a transcription factor binding
site (TFBS) and the second might be a transcription start site
(TSS). Borrowing from the point process terminology, we call the first
set of  occurrences ``parents'' and the second set of 
occurrences ``children''.  We assume the locations of the
parents  along the genome follow a uniform
distribution, and each parent  independently generates children
 in the surrounding genome according to the same distribution
centered around the parent. This idea is illustrated in Figure~\ref{fig:motifs}.
\begin{figure}[!ht]  
\centering
\subfloat[Sparse parent TREs]{\includegraphics[width=.65\textwidth]{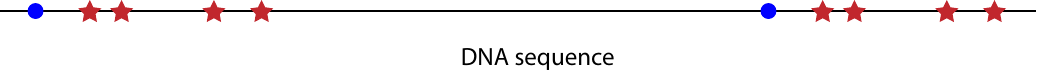}}\\
\subfloat[Dense parent TREs]{\includegraphics[width=.65\textwidth]{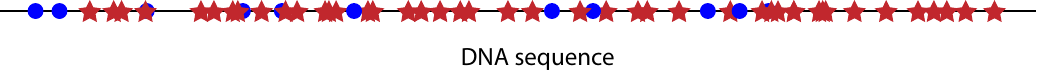}}
\caption{Illustration of the parents/children model. The locations of parents are
  shown as blue dots along the DNA sequence, and the locations of  children 
  generated by each parent are shown as red stars following the parent. The distribution of children is the same after each parent,
but the parentage of each child becomes less clear as the density
of parents increases.}
\label{fig:motifs}
\end{figure}

More formally, let $U_1,\ldots,U_m$ be a collection of $m$
i.i.d.\,realizations of a uniform random variable on the set
$\{0,\ldots,p-1\}$; these corresponds to the parents' locations. Each parent $U_i$  give birth independently to
some children. The number of children of $U_i$ at distance $j$ is given by $N_{U_i+j}^i$, which is a Poisson
variable with distribution $\mathcal{P}(x^*_j)$, where $x^*_j$ represents  the likelihood of a child's existence at distance $j$ from its parent.
 Here we
understand $U_i+j$ in a cyclic way, i.e.\,this is actually $U_i+j$
modulus $p$.  We observe at each position $k$ between $0$ and $p-1$
the total number of children regardless of who their parent might be,
{\em i.e.,} 
$$Y_k=\sum_{i=1}^{m} N_k^i.$$ 
This problem can be translated as
follows: we observe the $U_i$'s and the $Y_k$'s, whose conditional distribution 
given the $U_i$'s is 
$$ Y_k\sim \mathcal{P}\left(\sum_{i=1}^m x^*_{k-U_i}\right).$$ 

Given the $U_i$'s, the elements of $Y=(Y_0,\ldots,Y_{p-1})^\T$ are independent.  The aim
is to recover the vector $x^*=(x^*_0,\ldots,x^*_{p-1})^\T$. We assume throughout that $x^*$ is $s$-sparse for
some $1 \le s < \min(m,p)$. The sparsity is a reasonable assumption in
genomics because linked parents and children in our model correspond
to distinct chemical reactions in the underlying biochemical
system.

The above model actually amounts to a random convolution (we are
convolving the signal $x^*$ by the random empirical measure of the
parents). To the best of our knowledge, the analysis of such a convolution
problem is entirely new.  Other authors have studied random
convolution, notably \cite{RombergToeplitz,CCA,CAKE,CandesPlan}, but
those analyses do not extend to the problem considered here. For
example, Cand\`{e}s and Plan \cite[p5]{CandesPlan} consider random
convolutions in which they observe a random subset of elements of the
product $Ax^*$. They note that $A$ is an isometry if the Fourier
components of any row of $A$ have coefficients
with the same magnitude.  In contrast, in our
setting the Fourier coefficients of $A$ are random and do not have
uniform magnitude, and so the analysis in \cite{CandesPlan} cannot be
directly applied in our setting. In particular, the ratio of $p$ (the
number of elements in $x^*$ and the number of measurements) to $\nU$
(the number of uniformly distributed parents) will play a crucial
role in our analysis but is not explored in the existing literature.

\subsection{Poisson random convolution model}

Let us introduce the multinomial variable $\Nb$, defined for all
$k\in \mathbb{Z}$ by
\begin{equation}
\label{Ncount}
\Nb(k)=\mbox{card}\left\{i: \ U_i=k[p]\right\}
\end{equation}
where $k[p]$ denotes $k$ modulo $p$.
It represents the number of parents at position $k$ on the
circle. Note that $\sum_{u=0}^{p-1} \Nb(u)=\nU$, fact which will be extensively used in proofs, and let us denote
$$A=\begin{pmatrix}
\Nb(0) & \Nb(p-1) & \cdots & \Nb(1)\\
\Nb(1)& \Nb(0) & \ddots &\vdots \\
\vdots & \ddots&\ddots & \Nb(p-1)\\
\Nb(p-1) & \cdots & \Nb(1) & \Nb(0)\\
\end{pmatrix};
$$
that is, $A_{\ell,k} = \Nb(\ell-k)$.
Using this notation, we have in fact the observation model
$$Y \sim \mathcal{P}(A x^*).$$
Note that 
$\E(A)=m\ones_p\ones_p^\T$. Therefore, in expectation, all its
eigenvalues are null except the first one. In this sense it is a badly
ill-posed problem despite the fact that the sensing matrix is square
here. The ill-posedness can also be viewed by the fact that in
expectation, we are convolving the unknown $x^*$ with a uniform
distribution, which is known to be an unsolvable problem. Therefore
(as in many works on compressed sensing), we rely on the randomness to
prove that Assumption~\asref{as:RE}{\kb,\ka} is satisfied with high
probability.

Finally, since $m$ corresponds to the number of parents, it controls
the total number of (randomly shifted) copies of $x^*$ that are
observed and hence determines the expected total number of observed
events, which is $m \|x^*\|_1$. In this sense, $m$ controls the
Poisson rates and signal-to-noise ratio, and also controls the
ill-posedness of the inverse problem.

\subsection{Rescaling and recentering}
As for the Bernoulli case, we first rescale  and recenter the sensing matrix 
$$\tA \deq \frac{1}{\sqrt{\nU}}
A-\frac{\sqrt{\nU}-1}{{p}}\ones\ones^\T,$$
which satisfies that $\E(\tA^\T\tA)=I_p$.
Moreover for any $k\in\{0,\ldots,p-1\}$,
we can easily define:
$$\tY_k \deq \frac{1}{\sqrt{\nU}} Y_k - \frac{\sqrt{\nU}-1}{{p}}
\ybar,$$
where $\overline{Y} = \frac{1}{m} \|Y\|_1$. 
Note that because of the particular form of $A$, $\expect[\tY|\tA] = \tA x^*$, (see Lemma \ref{centering} in the Appendix) which explains why in this case the remainder term $r_k$ described in Section \ref{sec:weights} is actually null.

\subsection{Assumption~\ref{as:RE} holds with high probability}
Let $\tG := \tA^\T \tA$.
\begin{proposition}\label{GxiConv}
  There exists absolute positive constants $\kappa$ and $C$ and an
  event of probability larger than $1-C/p$, on which
 \begin{equation}\label{cond_xi}
\forall k, \ell, \left| \left( \tG - I_p \right)_{k,\ell} \right| \le \xi :=\kappa \left( \frac{\log p}{\sqrt{p}} +
    \frac{\log^2 p}{\nU}\right).
\end{equation}
This implies on the same event that Assumption~$\ref{as:RE}({\kb,\ka})$ holds with
$$\kb=\sqrt{\xi}\quad\mbox{and}\quad\ka=1.$$
\end{proposition}
 This result is a result of concentration inequalities for $U$-statistics (see Proposition \ref{MatG} in the appendix).

\subsection{Choice of the weights}
As in the Bernoulli case (Section~\ref{sec:bernoulli}), we consider
both a constant weight (corresponding to a classical LASSO estimator)
and non-constant weights that follows from Lemma \ref{concPoisGen}.

\subsubsection{Definition of constant weight and rates for the estimate $\xL$}
For all $k = 1,\ldots,p$, define the vector $V_k \in \reals^p$ so that
the $\ell^{\rm th}$ element is
\begin{equation}
V_{k,\ell}:= \left( \frac{\mathbb{N}(\ell-k)-\frac{m-1}{p}}{m}\right)^2.
\label{eq:Vk2}
\end{equation}
Let $$W = \max_{k \in\{ 1,\ldots,p\}} \ave{V_k,\mathbb{N}}.$$
Furthermore, let
$$B=\max_{u\in \{0,\ldots,p-1\}} \frac{1}{\nU} \left| \Nb(u) -
  \frac{\nU-1}{p}\right| = \|V_1\|_\infty^{1/2}.$$
Then one can choose the following constant weight 
\begin{equation}\label{def:cst-W}
\hd: = \sqrt{4 W\log p}\left [\sqrt{\ybar+\frac{5\log
      p}{3\nU}}+\sqrt{\frac{\log p}{\nU}}\right ]  + \frac{2B\log p}{3}
\end{equation}
and one can prove that it satisfies Assumption \asref{as:dk}{d} except on an event of probability of order $1/p$ (see Proposition~\ref{ConstConv}). 

The order of magnitude of $\hd$ is given by Proposition~\ref{upboundd} in the appendix and states that
$$\hd^2 \lesssim \left( \frac{\log(p)^2}{p} + \frac{\log(p)^3}{m}\right) \left(\norm{x^*}_1 + \frac{\log(p)}{m}\right).$$
We consider once again a sparse signal $x^*$ with support size $s $. By fixing $\varepsilon=1/2$ and $\gamma>2$, it is easy to see that \eqref{mainc} is implied for large $p$ by
\begin{equation}\label{sparse-conv-L}
s  \ll \min\left(\frac{\sqrt{p}}{\log(p)},\frac{m}{\log(p)^2}\right)
\end{equation}
and applying Proposition \ref{th:L2} gives that except on an event of probability of order $1/p$,
\begin{equation}\label{CL}
\norm{\xL-x^*}_2^2\lesssim_\gamma  s 
\left(\frac{\log(p)^2}{p}+\frac{\log(p)^3}{m}\right)\left(\norm{x^*}_1+\frac{\log(p)}{m}\right). 
\end{equation}

\subsubsection{Definition of non-constant weights and rates for the estimate $\xW$}
For all $k=0,\ldots,p-1$, one can choose the non-constant weights given by
\begin{equation} \label{def:noncst-W}
\hd_k = \sqrt{4 \log p}\left [\sqrt{\ave{V_k,Y}+\frac{5 B^2 \log
      p}{3}}+\sqrt{B^2 \log p}\right ]  + \frac{2B\log p}{3},
 \end{equation}
 with
 for all $k$ in $\{0,\ldots,p-1\}$.
These weights satisfy Assumption~\asref{as:dk}{\hd} except on an event of probability of order $1/p$
(see Proposition~\ref{NonConstConv}). 
The order of magnitude of the $\hd_k$'s are subtle to derive and we have been able to derive them only 
 in the range
\begin{equation}\label{relationshipspourxi}
\sqrt{p} \log p \lesssim m \lesssim \frac{p}{\log(p)}.
\end{equation}
where Proposition~\ref{updownbounddk} shows that
$$ \frac{ x^*_k  \log p}{\nU} +  \frac{\log p}{p} \sum_{u\not = k} x^*_u
  + \frac{\log^2 p}{\nU^2} \lesssim
\hd_k^2 \lesssim \frac{ x^*_k \log p }{\nU} + \frac{\log^2 p}{p} \sum_{u\not = k} x^*_u+ \frac{\log^4 p}{\nU^2}.$$

We consider a sparse signal $x^*$ with support  $S^*$. Then, since $m\lesssim p$
$$\frac{\|d_{S^*}\|_2^2}{\dmin^2}\lesssim
\frac{\|x^*\|_1\left(\frac{\log p}{m}+\frac{s \log^2
      p}{p}\right)+\frac{s \log^4 p}{m^2}}{\frac{\log
    p}{p}\|x^*\|_1+\frac{\log ^2 p}{m^2}} \lesssim \frac{p}{m}+ s
\log^2 p.$$
Since $\xi\simeq \log(p)/\sqrt{p}$ in the range \eqref{relationshipspourxi}, one can then easily see that for fixed $\gamma>2$ and $\varepsilon=1/2$, \eqref{mainc} is implied for large $p$ by
\begin{equation}\label{sparse-conv-W}
s \ll \frac{\sqrt{p}}{\log^3 p}.
\end{equation}
This condition is equivalent up to logarithmic factors to \eqref{sparse-conv-L} which is necessary for the classical LASSO as soon as \eqref{relationshipspourxi} holds.

It remains to apply Proposition \ref{th:L2} to obtain that, under \eqref{relationshipspourxi} and \eqref{sparse-conv-W}, except on an event of probability of order $1/p$, 
\begin{equation}\label{WL}
\norm{\xW-x^*}_2^2\lesssim \left(\frac{  \norm{x^*}_1 \log
    p}{m}+\frac{s \norm{x^*}_1 \log^2 p}{p}+\frac{s \log^4 p}{m^2}\right).
\end{equation}
If the signal is strong enough (i.e., $\norm{x^*}_1 \gg \log(p)/m$)
and in the range \eqref{relationshipspourxi}, the bound on the risk of
classical LASSO estimator, \eqref{CL}, is of order
$\frac{s \norm{x^*}_1 \log^3 p}{m}$. The ratio of the two bounds,
$$\frac{\frac{ \norm{x^*}_1 \log p}{m}+\frac{s \norm{x^*}_1 \log^2p
  }{p}+\frac{s \log^4 p}{m^2}}{\frac{s  \norm{x^*}_1 \log^3 p}{m}}=
\frac{1}{s\log^2 p  }+ \frac{m}{p \log p}+ \frac{\log p}{m\|x^*\|_1},$$
illustrates that the weight LASSO weights are much smaller as $p$
tends to infinity, if $\norm{x^*}_1 \gg \log(p)/m$ and
\eqref{relationshipspourxi} hold.

\revision{\subsection{Comparison with the oracle least squares estimate} 
We consider the {\it oracle least squares estimate} on~  $S^*$, the true signal support:
\begin{align}
\xls \deq & I_{S^*} (\tA _{S^*})^\# \tY;
\label{eq:OLS_C}
\end{align} 
see Section~\ref{sec:CSOLS} for definitions of the notation in this
estimator.  As before, this estimator functions as an oracle, since in
general the support set $S^*$ is unknown. The oracle least squares
estimate estimate satisfies the following result:
\begin{proposition}\label{prop:OLS2}
  Assume that $S^* = \supp(x^*)$ is known and $s  = |S^*|$.  Under \eqref{sparse-conv-L},  then $\xls$, the least
  square estimate of $x^*$ on $S^*$ satisfies
$$  \sum_{k \in S^*} (\tA^\T (\tY-\tA x^*))_k^2 \lesssim \| \xls-x^*\|_2^2\lesssim \sum_{k \in S^*} (\tA^\T (\tY-\tA x^*))_k^2$$
with probability exceeding $1-C/p$ for universal positive constants $C$.
\end{proposition}} See the discussion on the implications of this in
Section~\ref{sec:CSOLS}, which apply in this setting.

\subsection{Simulations}
In this section we simulate the random convolution model described
above and the performance of the (unweighted) LASSO and weighted LASSO
estimators.  We compare the performance of these estimators to an
oracle maximum likelihood estimator (MLE) estimated over the true
support $S^*$ which is denoted by $\wh{x}^{\mbox{MLE}.}$ Note that
this MLE is efficient and therefore should be the estimate with
minimum variance among unbiased estimators and have the minimum risk
in some sense.

We have shown that when $m$ is small relative to $p$, for both
weighted LASSO and least-squares estimators, the MSE upper bound
scales like $\frac{\|x^*\|_1\log p }{m}$;
for the LASSO estimator, the MSE scales like
$\frac{s\|x^*\|_1\log^3 p }{m}$; in the below, we present a simulation
showing that these upper bounds are tight.  \revision{First we examine
  the MSE of the LASSO and weighted LASSO estimators as a function of
  $s$, the sparsity level of $x^*$ for various $m$. We set $p = 5000$
  and $s$ ranges from 10 to 30. The $A$ matrix is randomly generated
  at each experiment. The $x^*$ is fixed for each value of $s$, and
  $\|x^*\|_1$ is kept the same for different $s$ values.  }
\revision{The tuning parameter $\gamma = 2.01$ such that it satisfies
  the constraint $\gamma>2$. Each point in the plots is averaged over
  100 random realizations.}

\revision{ In Figure~\ref{fig:MSEvss}, we explore the MSE as a
  function of $s$ for different values of $m$, with $p=5000$ in the
  Poisson random convolution setting for genomics. For $m$ in the
  range in \eqref{relationshipspourxi}, for fixed $p, m,$ and
  $\|x^*\|_1$, our theory predicts that weighted LASSO outperform
  standard LASSO by a factor of $s$, as reflected in (a)-(d).  Note
  that some of the values of $m$ in these plots are outside the range
  considered in
  \eqref{relationshipspourxi}. Figures~\ref{fig:MSEvss}(a)-(b) satisfy
  the conditions of our theory and behave as expected;
  Figures~\ref{fig:MSEvss}(c)-(d) do not satisfy our conditions and
  hence do not demonstrate the gains predicted by our
  theory. Specifically, as $m$ gets large, we see a greater dependence
  of the MSE on $s$ for the weighted LASSO. This effect is predicted
  by the theory. The weights used by the Weighted LASSO in
  \eqref{def:noncst-W} depend on the variances in $\ave{V_k,Y}$, where
  $V_k$ is defined in \eqref{eq:Vk2}. 
As $m$ grows, the elements of $V_k$ become more uniform as a consequence of the strong law of large numbers, so the
inner products (and hence the $d_k$'s) all start to be close to the same
value. When this happens, the weighted LASSO estimate closely
approximates the classical LASSO estimate, as illustrated in the
figure.  }

\begin{figure}[htp]
\centering
\subfloat[$m=6\times 10^1$]{\includegraphics[width=.32\textwidth]{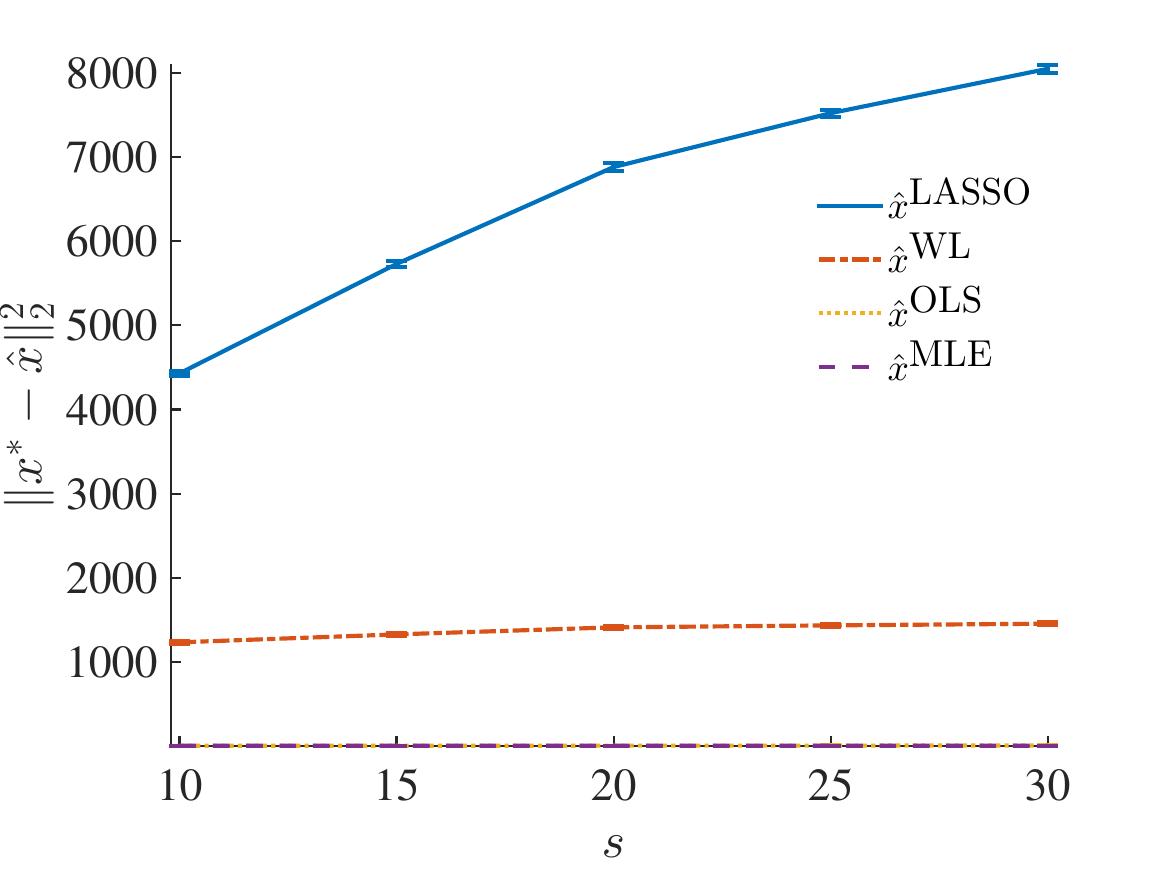}}~
\subfloat[$m=6\times 10^2$]{\includegraphics[width=.32\textwidth]{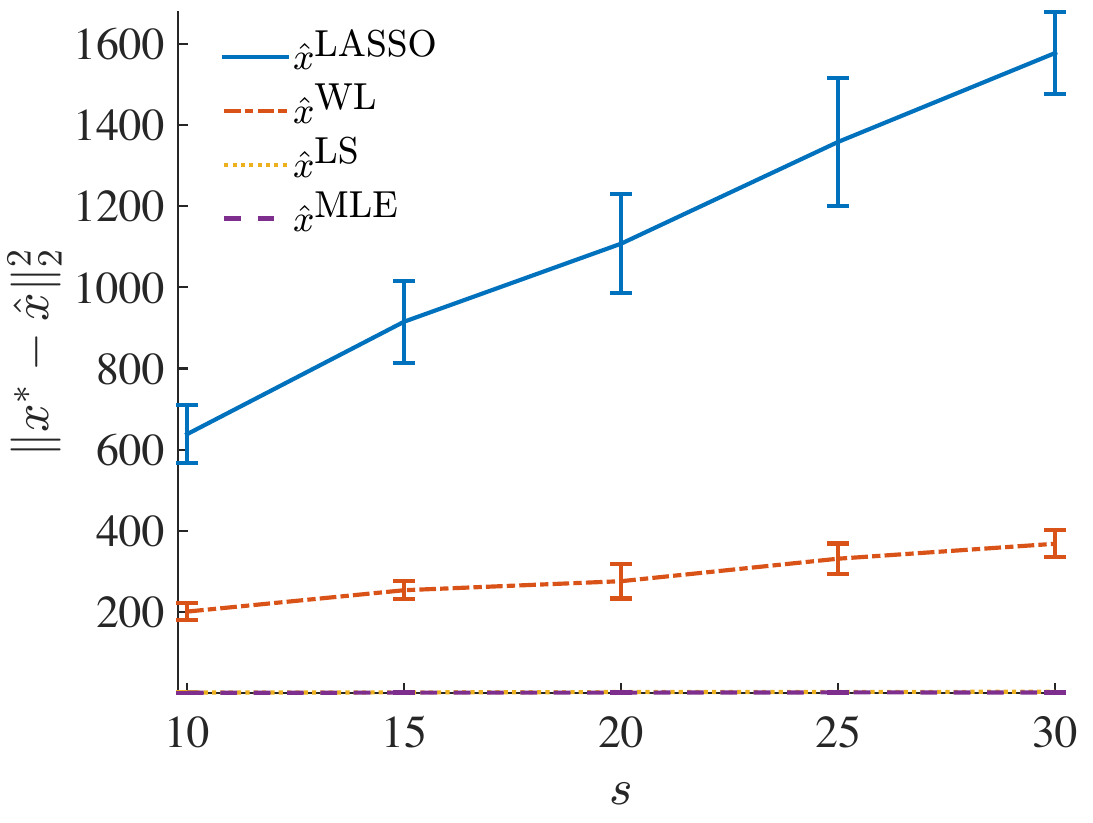}}~
\subfloat[$m=6\times 10^3$]{\includegraphics[width=.32\textwidth]{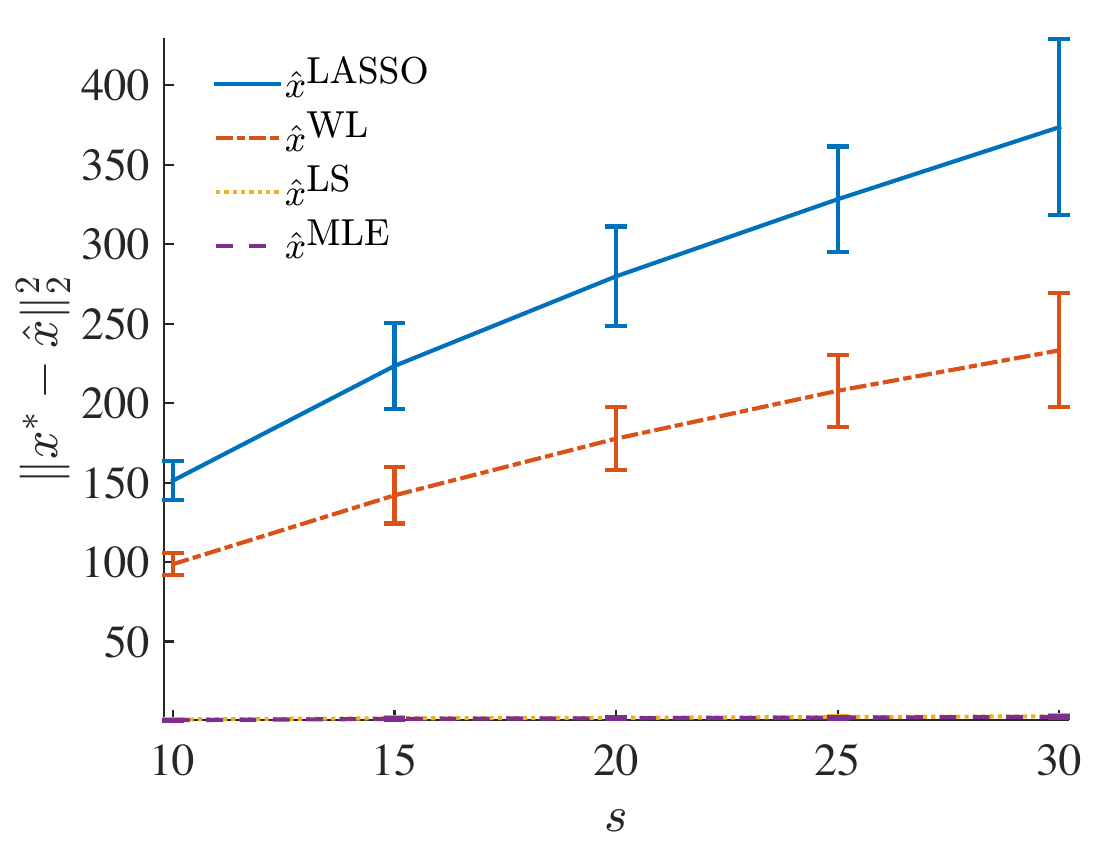}}\\
\subfloat[$m=6\times 10^4$]{\includegraphics[width=.32\textwidth]{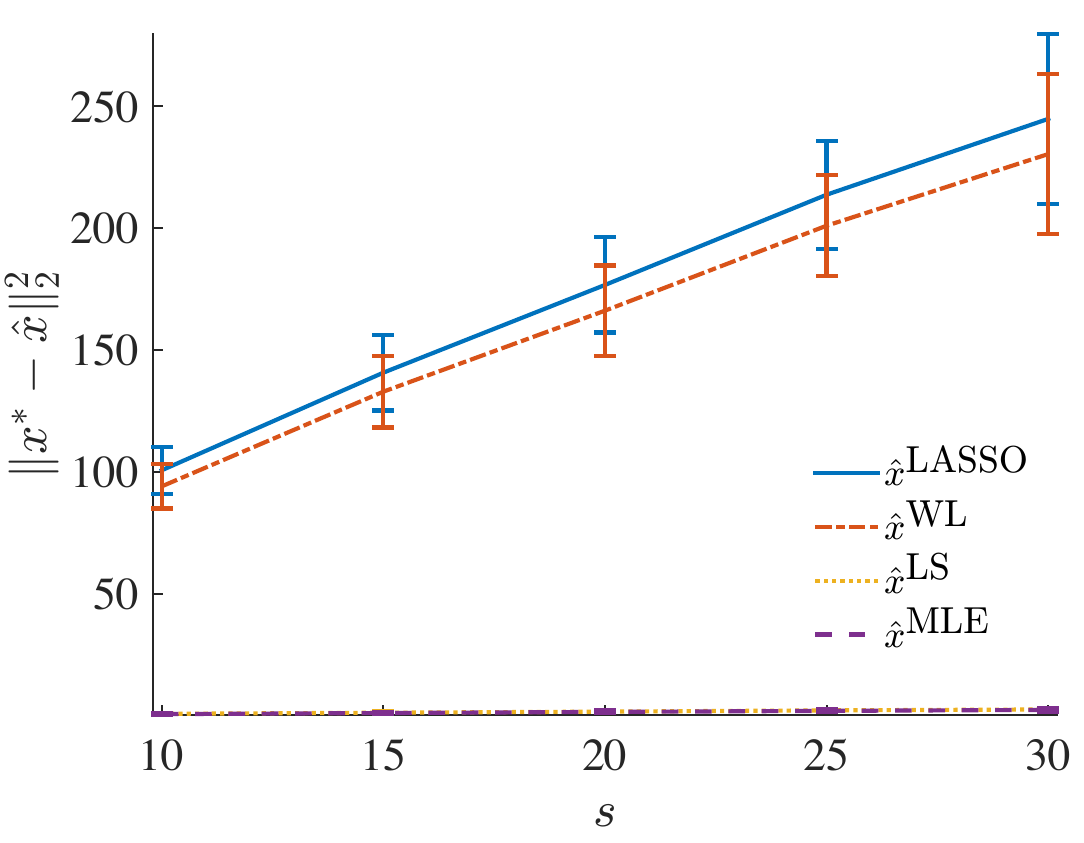}}~
\subfloat[$m=6\times 10^5$]{\includegraphics[width=.32\textwidth]{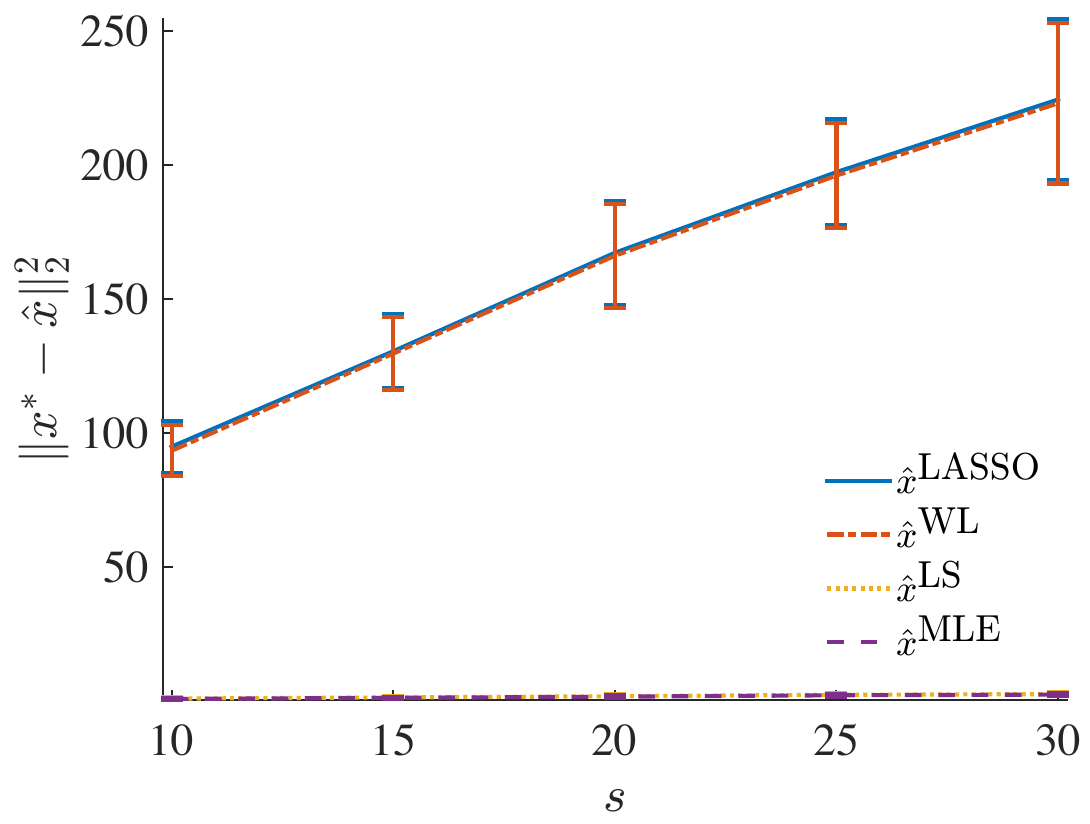}}
\caption{MSE vs $s$ for different values of $m$, with $p=5000$ in the
  Poisson random convolution setting for genomics. This simulation
  compares the estimators oracle least squares \eqref{eq:OLS_C},
  classical (unweighted) LASSO \eqref{eq:unweighted} with $d$ in
  \eqref{def:cst-W}, and the weighted LASSO \eqref{eq:lasso} with
  weights in \eqref{def:noncst-W}.  The oracle maximum likelihood
  estimators yield the smallest estimation error, and the oracle least
  squares estimator yields smaller estimation error than the LASSO and
  weighted LASSO estimators. The weighted LASSO estimator outperforms
  the standard LASSO estimator, as predicted by the theory.  Note that
  some of the values of $m$ in these plots are outside the range
  considered in \eqref{relationshipspourxi}. For $m$ satisfying
  \eqref{relationshipspourxi}, the error of the weighted LASSO does
  not scale with $s$, as predicted by the theory.  As $m$ gets large
  and exceeds this range, however, we see a greater dependence of the
  MSE on $s$ for the weighted LASSO. This effect is predicted by the
  theory. The weights used by the weighted LASSO in
  \eqref{def:noncst-W} depend on the variances in $\ave{V_k,Y}$,
  which become more uniform as $m$ gets large.}
\label{fig:MSEvss}
\end{figure}

\revision{Next, we examine the MSE of the classical and weighted LASSO
  estimators
  as a function of $p$, the length of $x^*$. In this experiment, we
  set $s=10$ and $p$ ranges from 1000 to 10000. For each $p$, we set
  $m\propto \sqrt{p}\log(p)$ ($m$ varies from 11 to 40 in this
  experiment). This specific choice of $m$ is made due to the
  requirement of $m \gtrsim \sqrt{p}\log(p)$ in our rate results
  \eqref{relationshipspourxi}.  The $x^*$ is fixed for each of the $p$
  value, and $\|x^*\|_1$ is kept the same for different $p$. $A$ is
  randomly generated in each trial. The tuning parameter
  $\gamma$ is set to $2.01$ such that it satisfies the constraint
  $\gamma>2$. Each point in the plots is averaged over 100 random
  realizations.  }

\revision{ Figure~\ref{fig:MSEvsp} shows MSE as a functions of
  $p$. The weighted LASSO estimators outperforms the classical LASSO
  estimators. Note that because of our choice of $m$, our theorem
  predicts that the MSE of weighted LASSO estimator scales like
  $\frac{\|x^*\|_1}{\sqrt{p}}$; the MSE of LASSO scales like
  $\frac{s\|x^*\|_1\log^2(p)}{\sqrt{p}}$. With fixed $s$ and
  $\|x^*\|_1$, the weighted LASSO estimator has an error rate
  $\propto 1/\sqrt{p}$, while the error rate of the LASSO estimator
  $\propto\log(p)^2/\sqrt{p}$. To better show the relationship between
  the MSE and $1/{\sqrt{p}}$, we plot two additional lines
  $\propto 1/{\sqrt{p}}$ for the one-step estimators $\xW$ and $\xL$
  in Figure~\ref{fig:MSEvsp}. In Figure~\ref{fig:MSEvsp}, the MSE
  curve of $\xW$ follows the $6.5\times 10^5/\sqrt{p}$ curve almost
  perfectly, while the MSE curve of $\xL$ decreases (with $p$) more slowly
  than the $1.3\times 10^6/\sqrt{p}$ curve. This shows that the MSE of
  $\xW$ has a rate $\propto1/\sqrt{p}$, while the MSE of $\xL$ is
  slower than $1/\sqrt{p}$, as predicted by the
  theory. Figure~\ref{fig:MSEvsm} shows MSE vs $m$ for $p = 5000$,
  where results are averaged
  over $50$ trials. This plot demonstrates that for large $m$, the
  weighted LASSO and classical LASSO are nearly equivalent, while for
  the range of $m$ in \eqref{relationshipspourxi}, the Weighted
  LASSO has lower errors.  }
\begin{figure}[!ht]
\centering
\subfloat[MSE vs.\ $p$]
{\includegraphics[width=0.48\textwidth]{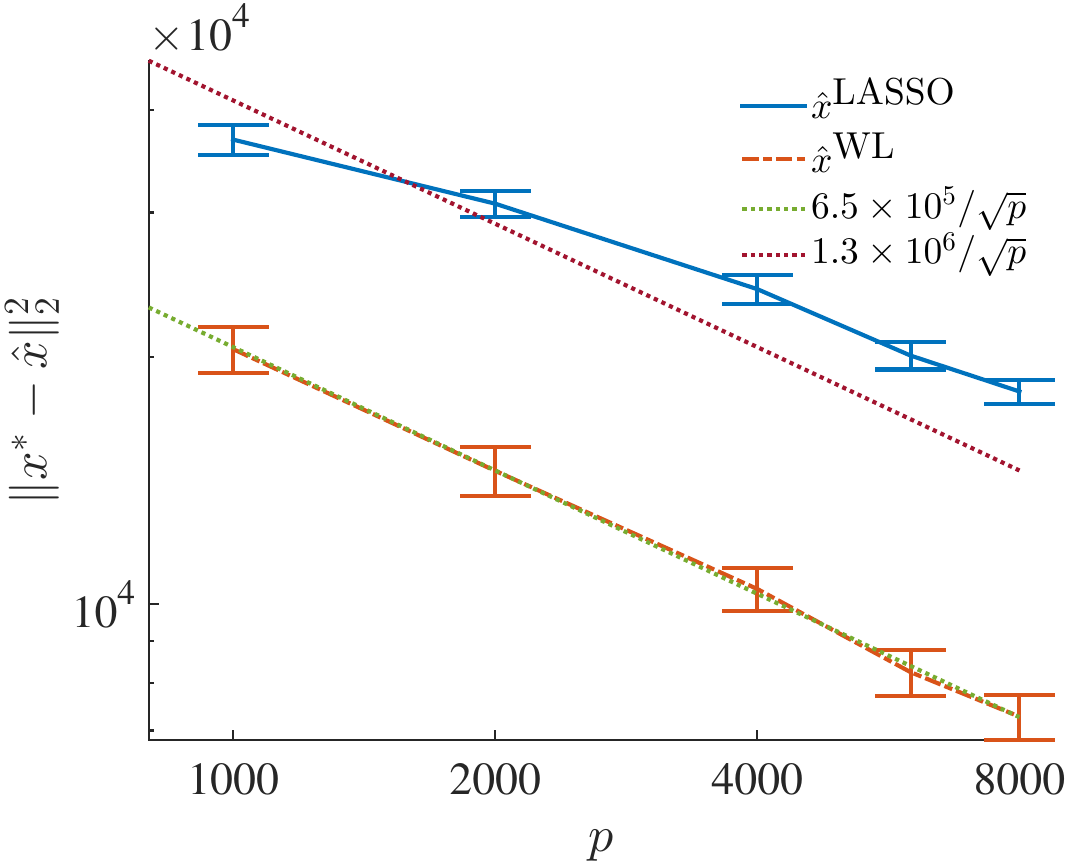} \label{fig:MSEvsp}}\hfill
\subfloat[MSE vs.\ $m$] 
{\includegraphics[width=.48\textwidth]{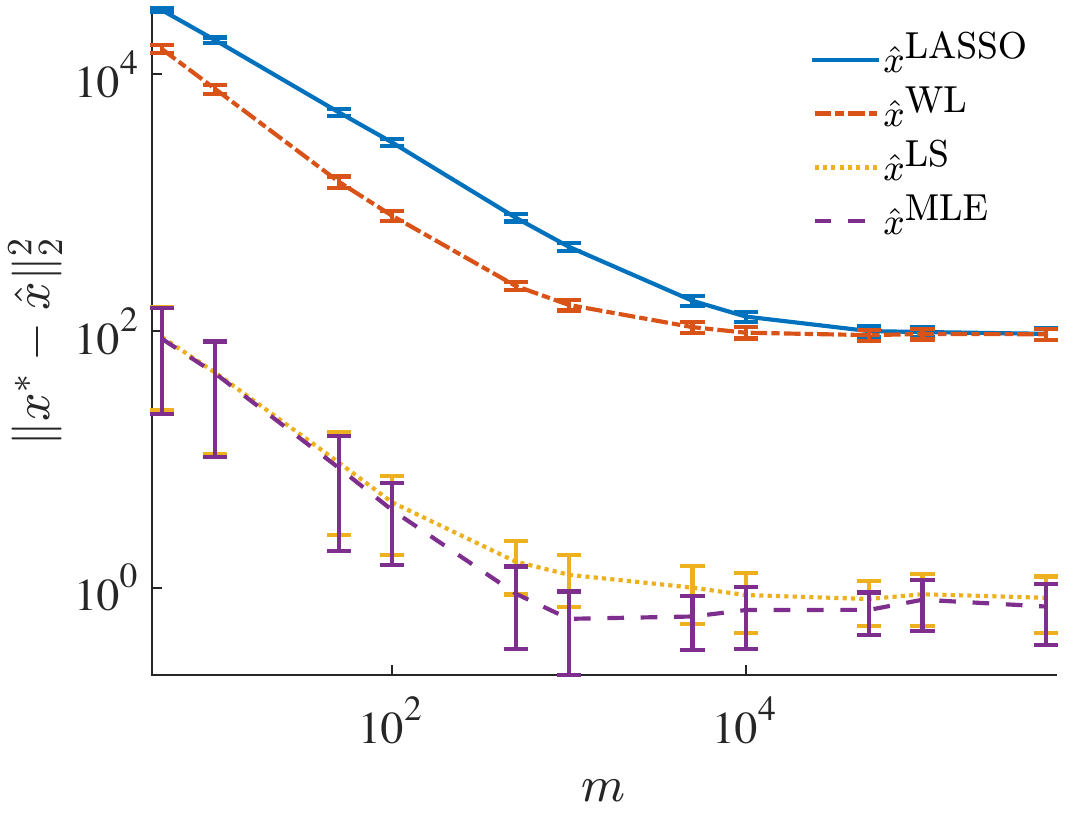} \label{fig:MSEvsm}}
\caption{Simulation results. \ref{fig:MSEvsp} shows MSE vs.\,$p$ with
  $m \propto \sqrt{p}\log{p}$ for the weighted LASSO and standard
  LASSO estimators. Weighted LASSO outperforms standard LASSO. The
  oracle estimators $\xls$ and $\xmle$ perform
  similarly. The MSE curve of $\xW$ follows
  the $6.5\times 10^5/\sqrt{p}$ curve almost perfectly, while the MSE
  curve of $\xL$ decreases (with $p$) more slowly than the
  $1.3\times 10^6/\sqrt{p}$ curve, showing that $\xW$ has a rate
  $\propto 1/\sqrt{p}$, while $\xL$ has a rate slower than
  $1/\sqrt{p}$. \ref{fig:MSEvsm} shows MSE vs $m$ for $p = 5000$ in the
  Poisson random convolution setting for genomics. Results averaged
  over $50$ trials. This plot demonstrates that for large $m$, the
  weighted LASSO and classical LASSO are nearly equivalent, while for
  the range of $m$ in   \eqref{relationshipspourxi}, the Weighted
  LASSO has lower errors. }
\label{fig:MSESim}
\end{figure}


\section{Discussion and Conclusions}

The data-dependent weighted LASSO method presented in this paper is a
novel approach to sparse inference in the presence of heteroscedastic
noise.  We show that a using concentration inequalities to learn
data-dependent weights leads to estimation errors which closely
approximate errors achievable by an oracle with knowledge of the true
signal support. 
To use this technique, concentration inequalities which account for
the noise distribution are used to set data-dependent weights which
satisfy the necessary assumptions with high probability.

In contrast to earlier work on sparse Poisson inverse problems
\cite{PCS_garvesh}, the estimator proposed here is computationally
tractable. In addition, earlier analyses required ensuring that the
product $Ax^*$ was bounded away from zero, which limited the
applicability of the analysis. Specifically, the random convolution
problem described in Section~\ref{sec:conv} could not be directly
analyzed using the techniques described in \cite{PCS_garvesh}.

Our technique can also yield immediate insight into the role of
background contamination. Consider a setting in which we observe
$$Y \sim \mathcal{P}(A x^* + b)$$
where $b \in \reals_+^n$ is a known (typically constant) background
intensity. In imaging, for instance, this would correspond to ambient
light or dark current effects. While $b$ contributes to the
noise variance, it does not provide any information about the unknown
signal $x^*$. Since $b$ is known, it can easily be subtracted from the
observations in the formation of $\tY$ and we can use exactly the
estimation framework described above ({\em e.g.}, the estimator
in~\eqref{eq:lasso}). However, because $b$ impacts the variance of the
observations, it will increase the value of $v$ in
Lemma~\ref{concPoisGen}, leading to a proportional increase in the
weights and hence the $\ell_2$ error decay rates. From here we can see
that the error decay rates will increase linearly with the amount of
background contamination.

It is worth noting that the core results of Section~\ref{sec:bounds}
do not use any probabilistic arguments and therefore do not rely at all
on Poisson noise assumptions. The
Poisson noise model is only used to derived data-dependent weights
that satisfy the necessary
assumptions high probability under the assumed
observation model. To extend our framework to new observation or noise
models, we would simply need to complete the following
(interdependent) tasks:
\begin{enumerate}
\item Determine a mapping from $A$ to $\tA$ which ensures $\tA$ satisfies
  Assumption~\ref{as:RE}.
\item Determine a mapping from $Y$ to $\tY$ so that
  $\expect[\tA^\T(\tY-\tA x^*)] = 0.$
\item Use concentration inequalities based on the assumed noise model
  to derive data-dependent weights which satisfy Assumption~\ref{as:dk}.
\end{enumerate}
Once these tasks are complete, the results of Section~\ref{sec:bounds}
can be immediately applied to compute recovery error rates.  Thus the
proposed weighted LASSO framework has potential for a variety of
settings and noise distributions.

\section{Acknowledgments}
We gratefully acknowledge the support of AFOSR award 14-AFOSR-1103,
NSF award CCF-1418976, the University of Nice Sophia Antipolis
Visiting Professor program and ANR 2011 BS01 010 01 projet 
Calibration.  The research of Vincent Rivoirard benefited from the
support of the {\it Chaire Economie et Gestion des Nouvelles 
  Donn\'ees}, under the auspices of Institut Louis Bachelier,
Havas-Media and Paris-Dauphine.

\bibliographystyle{unsrt}
\bibliography{Circle_Lasso_nohref}

\appendix
\numberwithin{equation}{section}
\numberwithin{lemma}{section}
\numberwithin{theorem}{section}
\numberwithin{remark}{section}

\section{Proofs of the LASSO bounds of Section~\ref{sec:bounds}}

\label{pf:l2}
\revision{The weighted LASSO regularizer, $\|Dx\|_1$ is {\em decomposable} in the
sense of \cite{negahban2009unified}, and hence we can leverage
variants of the results developed in that paper to characterize the
performance of the weighted LASSO estimator. The results of
\cite{negahban2009unified} applied na\"ively to the proposed estimator
results in rates equivalent to those associated with the classical
(unweighed) LASSO. Thus several technical details of the analysis must
be adjusted to derive tight bounds for the weighted LASSO
estimator. This section contains those details for the sake of
completeness.}

Before proving Proposition~\ref{th:L2}, we need the following supporting
lemma.
\begin{lemma} \label{lem:gamma2}
Consider any optimal solution $\xW$ to the weighted LASSO
optimization problem with $\gamma > 2$.
Then, under  Assumption~\asref{as:dk}{\{d_k\}_k}, for any $S  \subseteq \{1,\ldots,p\}$,
the error $\Delta = \xW - x^*$ satisfies
$$\|D\Delta_{S^c}\|_1 \le \frac{\gamma +2}{\gamma-2}\|D
\Delta_S\|_1 + \frac{2\gamma}{\gamma-2} \|D x^*_{S^c}\|_1.$$
\end{lemma}
\begin{remark} Lemma~\ref{lem:gamma2} implies
\begin{equation}
\|D\Delta\|_1= \|D\Delta_{S}\|_1+\|D\Delta_{S^c}\|_1 \le \frac{2\gamma}{\gamma-2}\|D
\Delta_S\|_1 + \frac{2\gamma}{\gamma-2} \|D x^*_{S^c}\|_1.
\label{eq:lem1b}
\end{equation}
\end{remark} 

\begin{proof}{Lemma~\ref{lem:gamma2}}
Let $\Delta := \xW-x^*$. Our proof follows the structure of
\cite[Lemma 3]{negahban2009unified}:
\begin{align*}
\|\tY - \tA \xW\|_2^2 - \|\tY - \tA x^*\|_2^2 =& \|\tY - \tA x^*
-\tA\Delta\|_2^2 - \|\tY - \tA x^*\|_2^2\\ =& \|\tY - \tA x^*\|_2^2 -
2 \ave{\tY - \tA x^*, \tA \Delta} + \|\tA \Delta\|_2^2 - \|\tY - \tA
x^*\|_2^2\\ =&- 2 \ave{\tY - \tA x^*, \tA \Delta} + \|\tA \Delta\|_2^2
\\
\geq&- 2| \ave{D^{-1}\tA^\T(\tY - \tA x^*), D\Delta}|\\
\geq&-2\|D\Delta\|_1,
\end{align*}
by Assumption~\asref{as:dk}{\{d_k\}_k}. 
Now by the ``basic inequality'' we have
$$\|\tY - \tA \xW\|_2^2 - \|\tY - \tA x^*\|_2^2 \le
\gamma\left[\|Dx^*\|_1 - \|D(x^*+\Delta)\|_1\right]$$
Therefore,
$$-2\|D\Delta\|_1\leq \gamma\left[\|Dx^*\|_1 - \|D(x^*+\Delta)\|_1\right]$$
and since
\begin{eqnarray*}
\|D(x^*+\Delta)\|_1&\geq& \|D(x^*_S+\Delta_{S^c})\|_1-\|D(x^*_{S^c}+\Delta_S)\|_1\\
&\geq&\|Dx^*_S\|_1+\|D\Delta_{S^c}\|_1-\|Dx^*_{S^c}\|_1-\|D\Delta_S\|_1
\end{eqnarray*}
we obtain
$$-2\|D\Delta_S\|_1-2\|D\Delta_{S^c}\|_1\leq \gamma\|Dx^*\|_1 - \gamma\left(\|Dx^*_S\|_1+\|D\Delta_{S^c}\|_1-\|Dx^*_{S^c}\|_1-\|D\Delta_S\|_1\right)$$
and
$$\|D\Delta_{S^c}\|_1 \le \frac{\gamma +2}{\gamma-2}\|D
\Delta_S\|_1 + \frac{2\gamma}{\gamma-2} \|D x^*_{S^c}\|_1.$$
\end{proof}

\begin{proof}{Proposition~\ref{th:L2}}
We still denote $\Delta := \xW-x^*$. By the ``basic inequality'' we have
$$\|\tY - \tA \xW\|_2^2 - \|\tY - \tA x^*\|_2^2 \le
\gamma\left[\|Dx^*\|_1 - \|D(x^*+\Delta)\|_1\right].$$
This gives 
$$- 2 \ave{\tY - \tA x^*, \tA \Delta} + \|\tA \Delta\|_2^2 \le
\gamma\left[\|Dx^*\|_1  - \|D(x^*+\Delta)\|_1\right]$$
or
\begin{align*}
\|\tA \Delta\|_2^2 \le&  2 \ave{\tY - \tA x^*, \tA \Delta}
                        +\gamma\left[\|Dx^*\|_1  - \|D(x^*+\Delta)\|_1\right]\\
\le& 2\|D^{-1}(\tA^\T(\tY - \tA x^*))\|_\infty \|D\Delta\|_1  +\gamma\left[\|Dx^*\|_1  - \|D(x^*+\Delta)\|_1\right]\\
\le&2\|D\Delta\|_1 +\gamma\left[\|Dx^*\|_1  -
     \|D(x^*+\Delta)\|_1\right]\\
=&2\|D\Delta\|_1 +\gamma \|D x^*_S\|_1 + \gamma
     \|Dx_{S^c}^*\|_1 - \gamma\|D(x^*+\Delta)_S\|_1-\gamma\|D(x^*+\Delta)_{S^c}\|_1\\
\le&2\|D\Delta\|_1 +\gamma \|D \Delta_S \|_1 + \gamma
     \|Dx_{S^c}^*\|_1\\
\le&\rho_{\gamma}\left(\|D\Delta_S\|_1 + \|Dx^*_{S^c}\|_1\right)
\end{align*}
where for the last step we use \eqref{eq:lem1b} and the definition of $\rho_{\gamma}$. We now provide a lower bound to the left hand side. First we use \eqref{eq:lem1b} to bound
\begin{align*}
\|\Delta\|_1 \le& \frac{\tilde\rho_{\gamma}}{\dmin}\left(\|D\Delta_{S}\|_1 +
                        \|Dx^*_{S^c}\|_1\right)
\le\frac{\tilde\rho_{\gamma}}{\dmin}\left(\|d_S\|_2 \|\Delta\|_2 + \|Dx^*_{S^c}\|_1\right)
\end{align*}
where
$$\tilde\rho_{\gamma}:=\frac{2\gamma }{(\gamma-2)}\quad\mbox{and}\quad \|d_S\|_2 := \left( \sum_{k \in S} d_k^2 \right)^{1/2}.$$
Combining this with Assumption~\asref{as:RE}{\kb,\ka} gives
\begin{align*}
\|\tA \Delta\|_2 \ge& \ka \|\Delta\|_2 - \frac{\tilde\rho_{\gamma}
  \kb}{\dmin}[\|d_S\|_2 \|\Delta\|_2 + \|Dx^*_{S^c}\|_1]\\
=& \|\Delta\|_2 \left(\ka - \frac{\tilde\rho_{\gamma}\kb}{\dmin}\|d_S\|_2\right) 
- \frac{\tilde\rho_{\gamma}\kb}{\dmin} \|Dx^*_{S^c}\|_1.
\end{align*}
Recall by assumption that
$$\ka - \tilde\rho_{\gamma}\kb \frac{\|d_S\|_2}{\dmin} \geq \eps;$$
then 
$$
\eps \|\Delta\|_2 \le \|\tA\Delta\|_2 +
                       \frac{\tilde\rho_{\gamma}\kb}{\dmin}\|Dx^*_{S^c}\|_1$$
and using the previous control of $\|\tA\Delta\|_2^2$, we obtain
\begin{align*}
\|\Delta\|_2^2 \le&\frac{2}{\eps^2} \|\tA\Delta\|_2^2+\frac{2\tilde\rho_{\gamma}^2\kb^2}{\eps^2\dmin^2}\|Dx^*_{S^c}\|_1^2\\
\leq&
  \frac{2\rho_{\gamma}}{\eps^2}\|d_S\|_2\|\Delta\|_2+\frac{2\rho_{\gamma}}{\eps^2}\|Dx^*_{S^c}\|_1
  +  \frac{2\tilde\rho_{\gamma}^2\kb^2}{\eps^2\dmin^2}\|Dx^*_{S^c}\|_1^2\\
\le&  \frac{2\rho_{\gamma}}{\eps^2}\|d_S\|_2\|\Delta\|_2+ \sigma_S^2
\end{align*}
where 
$$\sigma_S^2 := \frac{2\rho_{\gamma}}{\eps^2}\|Dx^*_{S^c}\|_1
  +  \frac{2\tilde\rho_{\gamma}^2\kb^2}{\eps^2\dmin^2}\|Dx^*_{S^c}\|_1^2.$$
Recall $y^2 - by - c \le 0$ implies $y^2 \le b^2 + 2c$ for any
$y$. Thus,
\begin{align*}
\| \Delta\|_2^2 \leq& \frac{4\rho_{\gamma}^2}{\eps^4}\|d_S\|_2^2 + 2\sigma_S^2 \\
\leq&\frac{4\rho_{\gamma}^2}{\eps^4}\|d_S\|_2^2 +\frac{4\rho_{\gamma}}{\eps^2}\|Dx^*_{S^c}\|_1 +    \frac{4\tilde\rho_{\gamma}^2\kb^2}{\eps^2\dmin^2}\|Dx^*_{S^c}\|_1^2.
\end{align*}
We conclude by observing that $\tilde\rho_{\gamma}\leq\rho_{\gamma}.$
\end{proof}

\section{Concentration inequality for data-dependent weights (proof of Lemma \ref{concPoisGen})}
\label{ap:weights}
The proof of \eqref{unC} is classical and follows the lines of the
proof of Bernstein's inequality. Let $Z=R^\T Y$ and $z=R^\T A x^*$.
Conditioned on the sensing matrix $A$, the $Y_\ell$'s are independent
Poisson variables of mean $\sum_{k=1}^{p} a_{\ell,k} x^*_k.$ Therefore
for all $\lambda>0$ (eventually depending only on the sensing matrix
$A$)
\begin{align*}
\E\left(e^{\lambda (Z-z)}\big| A \right) =&
\prod_{\ell=1}^{p} \E\left(e^{\lambda R_\ell
                                             \left[Y_\ell-\sum_{k=1}^{p}
                                             a_{\ell,k} x^*_k \right]}\big|A\right) = \prod_{\ell=1}^{p}  \exp\left[ \left(e^{\lambda R_\ell}-\lambda
    R_\ell-1\right) \sum_{k=1}^{p} a_{\ell,k} x^*_k \right].\\
\end{align*}
If $ \lambda <(3/b)$, then by classical computations (see \cite{stflour} for instance), for all $\ell$,
$$\big|e^{\lambda r_\ell}-\lambda r_\ell-1\big| \leq \frac{\lambda^2 r_\ell^2 /2}{1-\lambda b/3}.$$
Therefore, if $ \lambda <(3/b)$,
$$\E\left(e^{\lambda (Z-z)}\big|A\right) \leq \exp\left(\frac{\lambda^2 v /2}{1-\lambda b/3}\right).$$
Hence by Markov's inequality, for all $u>0$
$$\P(Z-z\geq u)\leq  \exp\left(\frac{\lambda^2 v /2}{1-\lambda b/3}-\lambda u\right).$$
It remains to optimize in $\lambda$ and conclude as in Bernstein's
inequality (see \cite{stflour}). \revision{More precisely, the upper bound
  in $\lambda$ is minimal if
$$ \lambda = \frac{2 \frac{bu}{3}+ v - \sqrt{v^2+2 \frac{buv}{3} }}{2 \left(\frac{b^2u}{9}+\frac{vb}{6}\right)},$$
which gives
$$\P(Z-z\geq u) \leq \exp\left(\frac{\sqrt{v^2+2 \frac{buv}{3} }- v - \frac{bu}{3}}{b^2/9}\right).$$
We want the upper bound to be equal to $e^{-\theta}$, which gives by inversion
$$u = \sqrt{2 v\theta}+ \frac{b \theta}{3}.$$}

For \eqref{deuxC} it is sufficient to apply \eqref{unC} to both $R$ and $-R$. For \eqref{troisC} it is sufficient to apply \eqref{unC} to $-R_2$ and for \eqref{quatreC}, to combine both \eqref{deuxC} and \eqref{troisC}.

\section{Validation of assumptions for Bernoulli sensing of Section~\ref{sec:bernoulli}}
\subsection{Rescaling and recentering}
\label{scalingBern}
First let us prove that $\E( \tG)=I_p$,  with $\tG={\tA}^\T{\tA}.$
Indeed, the $(k,k')$ element of $\tG$ is
$$
\tG_{k,k'}= \frac{\sum_{\ell=1}^n (a_{\ell,k}-q)(a_{\ell,k'}-q)}{n q (1-q)}.
$$
Hence
$\E(\tG_{k,k'}) = 0$ if $k\not = k'$ and $\E(\tG_{k,k})=1$.
Next let $Z= {\tA}^\T({\tY}-{\tA}x^*)$ and let us prove that
$\E(Z)=0$.
\begin{align*}
Z=& \frac{1}{n q(1-q)}(A^\T-q  \ones_{p \times 1} \ones_{\nObs \times
  1}^\T) \left(\frac{nY-(\sum_{k=1}^p Y_k) \ones_{\nObs \times
  1}}{n-1}-(A-q  \ones_{\nObs \times 1} \ones_{p \times
  1}^\T)x^*\right)\\
  =&\frac{1}{n q(1-q)}\left( \frac{n}{n-1} A^\T Y- \frac{\sum_{k=1}^p Y_k}{n-1} A^\T \ones_{\nObs \times 1} - A^\T A x^* + \right.\\
& \qquad\qquad\qquad \left. + q  \norm{x^*}_1 A^\T \ones_{\nObs \times
  1} + q \ones_{p \times 1} \ones_{\nObs \times
  1}^\T A x^* - q^2 \norm{x^*}_1 \ones_{p \times 1} \ones_{\nObs \times
  1}^\T \ones_{\nObs \times
  1}\right)\\
  =& T_1+ T_2
\end{align*}
with 
$$T_1 = \frac{1}{n q(1-q)}\left(  \frac{n}{n-1} A^\T - \frac{A^\T \ones_{\nObs \times
  1}\ones_{\nObs \times
  1}^\T}{n-1}\right) (Y-Ax^*)$$
  and
  $$T_2= \frac{1}{n q(1-q)}\left(\frac{A^\T Ax^*-A^\T \ones_{\nObs \times
  1}\ones_{\nObs \times
  1}^\T A x^*}{n-1}+ q  \norm{x^*}_1 A^\T \ones_{\nObs \times
  1}+  q \ones_{p \times 1} \ones_{\nObs \times
  1}^\T A x^* - q^2 n \norm{x^*}_1 \ones_{p \times 1} \right).
$$
Since $\E(Y|A)=Ax^*$, $\E(T_1|A)=$ and therefore $\E(T_1)=0$.

Next the $k$th element of $T_2$ only depends on $A$ and satisfies
\begin{align*}
n q (1-q) T_{2,k}=& \frac{1}{n-1}\left(\sum_{\ell=1}^n\sum_{k'=1}^p a_{\ell,k} a_{\ell,k'} x_k' -\sum_{\ell,\ell'=1}^n \sum_{k'=1}^p a_{\ell,k}a_{\ell',k'} x_{k'}\right)+q\sum_{\ell=1}^n a_{\ell,k}\sum_{k'=1}^p x_{k'} + \\
& \qquad \qquad q\sum_{\ell'=1}^n\sum_{k'=1}^p a_{\ell',k'} x_{k'}- q^2 n \sum_{k'=1}^p x_{k'}\\
=& \frac{-1}{n-1} \sum_{\ell=1}^n\sum_{\ell' \not = \ell} \sum_{k'=1}^p (a_{\ell,k}-q)(a_{\ell',k'}-q) x_{k'}.
\end{align*}
Hence every element of $T_2$ is a degenerate U-statistics of order 2  and $\E(T_2)=0$. Note that $T_2$ can also be seen as
$T_2=\mathbb{M} x^*$ with
$$\mathbb{M}_{k,k'}=\frac{-1}{(n-1)n q (1-q)} \sum_{\ell=1}^n\sum_{\ell' \not = \ell} (a_{\ell,k}-q)(a_{\ell',k'}-q).$$

\subsection{Assumption~\ref{as:RE} holds (proof of Proposition~\ref{REnewCS})}
We use the following definition and
lemma:
\begin{Def}[Zhou 2009, Definition 1.3] Let $Y$ be a random
  vector in $\reals^p$; $Y$ is called isotropic if for every $y \in
  \reals^p$, $\expect|\ave{Y,y}|^2 = \|y\|_2^2$, and is $\psi_2(\alpha)$, for $\alpha$ a positive constant, if for every $y \in \reals^p$
$$\|\ave{Y,y}\|_{\psi_2} \le \alpha \|y\|_2$$
where for a random variable $X \in \reals$
$$\|X\|_{\psi_2} := \inf\{ t: \expect \exp(X^2/t^2) \le 2 \}.$$
\end{Def}
\begin{lemma}[Theorem 3 in Li and Raskutti, Minimax optimal convex
  methods for Poisson inverse problems under $\ell_q$-ball sparsity]
  There exist positive constants $c,c^{\prime},c^{\prime \prime}$ for which the following
  holds. Let $B$ be an isotropic $\psi_2(\alpha)$ random vector of $\R^p$. Let $B_1,\ldots,B_n \in \reals^p$ be
  independent, distributed according to $B$ and define $\Gamma =
  \sum_{i=1}^n \ave{ B_i, \cdot } e_i$, where $e_i$ is the $i$th element of the standard basis of $\R^n$. Then with probability
  at least $1-c^{\prime} \exp(-c^{\prime \prime} n)$, for all $x \in \reals^p$ we have
$$\frac{\|x\|_2}{4} - c \alpha^2 \sqrt{\frac{\log p}{n}} \|x\|_1 \le
\frac{\|\Gamma x\|_2}{\sqrt{n}}.$$
\end{lemma}
We apply this lemma with
$$\Gamma=\sqrt{n}\times\tA.$$
Therefore, for any $i$ and any $j$,
$$(B_i)_j=\frac{A_{ij}-q}{\sqrt{q(1-q)}}.$$
and we have for any $x\in\R^p$ and any $i$,
$$\E\left[\langle B_i,x\rangle^2\right]=\Var\left(\sum_{j=1}^p(B_i)_jx_j\right)=\|x\|_2^2,$$
so the $B_i$'s are isotropic.
Furthermore, using  \cite[Lemma 5.9]{vershynin2010introduction}
$$
\|\langle B_i,x\rangle\|_{\psi_2}^2\leq \square\sum_{j=1}^p \|x_j(B_i)_j\|_{\psi_2}^2\leq\square\sum_{j=1}^px_j^2 \|(B_i)_j\|_{\psi_2}^2.
$$
And since 
$$|(B_i)_j|\leq\frac{1}{\sqrt{q(1-q)}},$$
using Lemma 5.5 and Example 5.8 of \cite{vershynin2010introduction}, we obtain
$$\|(B_i)_j\|_{\psi_2}\leq\frac{1}{\sqrt{q(1-q)}}$$
and
$$
\|\langle B_i,x\rangle\|_{\psi_2}^2\leq\square\times\frac{\|x\|_2^2}{q(1-q)}.$$
So, in our setting, the $B_i$'s are $\psi_2(\alpha)$ with
 $$\alpha = \frac{1}{\sqrt{q(1-q)}}.$$ 
So, with probability
  at least $1-c^{\prime} \exp(-c^{\prime \prime} n)$, for all $x \in \reals^p$
$$\|\tA x\|_2 = \frac{\|\Gamma x\|_2}{\sqrt{n}} \ge \frac{\|x\|_2}{4}
- \frac{c}{q (1-q)} \sqrt{\frac{\log p}{n}} \|x\|_1,$$ and Assumption~$\ref{as:RE}({\kb,\ka})$ holds with
$\ka = \frac{1}{4}$ and $\kb = \frac{c}{q (1-q)} \sqrt{\frac{\log p}{n}}$. 

\subsection{Proofs for data-dependent weights}
\begin{proposition}\label{dBern}  
Let $$W=\max_{u,k=1,\ldots,p} w(u,k)$$ with
$$w(u,k)=\frac{1}{n^2(n-1)^2 q^2(1-q)^2}\sum_{\ell=1}^\nObs a_{\ell,u} \left(n a_{\ell,k} -\sum_{\ell'=1}^\nObs a_{\ell',k}\right)^2.$$
Then if $ nq \geq 12 \max(q,1-q) \log(p)$, \revision{and if $p\geq 2$} then there exists absolute constants  $c,c^{\prime}$ such that with probability larger than $1-c^{\prime}/p$, the choice
\begin{equation*}
d =  \sqrt{6W\log(p)}  \sqrt{\hat{N}} +
\frac{\log(p)}{(n-1)q(1-q)}+ c \left(\frac{3\log(p)}{n} +
  \frac{9\max(q^2,(1-q)^2)}{n^2q(1-q)}\log(p)^2\right)  \hat{N},
\end{equation*}
where $\hat{N}$ is an estimator of $\norm{x^*}_1$ given by
$$\hat{N}=\frac{1}{nq-\sqrt{6nq(1-q)\log(p)}-\max(q,1-q)\log(p)} \left(\sqrt{\frac{3\log(p) }{2}}+\sqrt{\frac{5\log(p)}{2}+ \sum_{\ell=1}^n Y_\ell }\right)^2.
$$
satisfies Assumption \asref{as:dk}{d}.
\revision{Moreover one can take $c=126$ as long as $n\geq 20$.}
\end{proposition}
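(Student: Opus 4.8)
The plan is to establish Assumption~\asref{as:dk}{d} by bounding $|(\tA^\T(\tY-\tA x^*))_k|$ uniformly in $k$ on a high-probability event. I would start from the decomposition proved in Appendix~\ref{scalingBern}, namely $\tA^\T(\tY-\tA x^*)=T_1+T_2$, where $T_1=R^\T(Y-Ax^*)$ has zero conditional mean given $A$ and $T_2=\mathbb{M}x^*$ is a completely degenerate U-statistic of order two depending only on $A$. The weight $d$ splits accordingly: its first two terms will absorb $|T_{1,k}|$, while its third term (carrying the constant $c$) will absorb $|T_{2,k}|$. Throughout I would take $\theta=3\log p$, so that each single-index exceedance has probability at most a constant times $e^{-3\log p}=p^{-3}$, and a union bound over $k=1,\ldots,p$ leaves a total failure probability of order $p^{-2}\le c'/p$.

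For the fluctuation term I would apply \eqref{deuxC} of Lemma~\ref{concPoisGen} with $R=R_k$ whose $\ell$-th entry is $\frac{na_{\ell,k}-\sum_{\ell'}a_{\ell',k}}{n(n-1)q(1-q)}$, so that $T_{1,k}=R_k^\T(Y-Ax^*)$. A direct computation identifies $v=(R_k)_2^\T Ax^*=\sum_u w(u,k)\,x^*_u\le W\norm{x^*}_1$ and $b=\norm{R_k}_\infty\le \frac{1}{(n-1)q(1-q)}$. With $\theta=3\log p$ this gives $|T_{1,k}|\le \sqrt{6W\log p}\,\norm{x^*}_1^{1/2}+\frac{\log p}{(n-1)q(1-q)}$, which matches the first two terms of $d$ once $\norm{x^*}_1$ is replaced by the observable upper estimate $\hat N$.

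The key auxiliary step is therefore to verify $\hat N\ge \norm{x^*}_1$ with high probability. For the denominator I would lower bound each column sum $\sum_\ell a_{\ell,k}$ by Bernstein's inequality applied to the bounded, centered variables $a_{\ell,k}-q$ (variance $q(1-q)$, range $\max(q,1-q)$), which produces exactly $nq-\sqrt{6nq(1-q)\log p}-\max(q,1-q)\log p$; the hypothesis $nq\ge 12\max(q,1-q)\log p$ ensures this is positive. For the numerator I would apply \eqref{troisC} with $R=\ones$, whence $v=\sum_k x^*_k\sum_\ell a_{\ell,k}$ and $R_2^\T Y=\sum_\ell Y_\ell$, giving $v<\left(\sqrt{\tfrac{3\log p}{2}}+\sqrt{\tfrac{5\log p}{2}+\sum_\ell Y_\ell}\right)^2$. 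Since $x^*\ge 0$, on the intersection of these events $(\text{denom})\,\norm{x^*}_1\le \sum_k x^*_k\sum_\ell a_{\ell,k}=v<(\text{numer})$, so $\norm{x^*}_1<\hat N$.

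The main obstacle is the degenerate U-statistic $T_{2,k}=\frac{-1}{(n-1)nq(1-q)}\sum_{\ell\ne\ell'}(a_{\ell,k}-q)\sum_{k'}(a_{\ell',k'}-q)x^*_{k'}$, for which the elementary Bernstein argument behind Lemma~\ref{concPoisGen} does not apply. Here I would invoke a Bernstein-type concentration inequality for completely degenerate U-statistics of order two, controlling the relevant norms of the kernel; this yields the Gaussian-like term of order $\frac{\log p}{n}\norm{x^*}_1$ together with the higher-order term of order $\frac{\max(q^2,(1-q)^2)}{n^2q(1-q)}\log^2 p\,\norm{x^*}_1$, precisely the third term of $d$ after bounding $\norm{x^*}_1\le\hat N$. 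Intersecting all these high-probability events (each family costing a factor $p$ in the union bound, absorbed since $p\cdot p^{-3}=p^{-2}$) gives $|(\tA^\T(\tY-\tA x^*))_k|\le d$ for every $k$ with probability at least $1-c'/p$, as claimed.
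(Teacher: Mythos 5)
Your proposal follows essentially the same route as the paper's proof: the same decomposition $\tA^\T(\tY-\tA x^*)=T_1+T_2$, the same application of \eqref{deuxC} to $R_k$ with $v\le W\norm{x^*}_1$ and $b\le\frac{1}{(n-1)q(1-q)}$, the same construction of $\hat N$ via \eqref{troisC} and a Bernstein bound on the column sums under $nq\ge 12\max(q,1-q)\log p$, and the same treatment of $T_2=\mathbb{M}x^*$ via a concentration inequality for degenerate U-statistics of order two, all with $\theta=3\log p$ and a union bound. The argument is correct; the only detail left implicit relative to the paper is the explicit computation of the kernel norms entering the U-statistic inequality, which produces the stated $\frac{3\log p}{n}+\frac{9\max(q^2,(1-q)^2)}{n^2q(1-q)}\log^2 p$ term.
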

 
\begin{proposition}\label{bounddBern}
  There exists some absolute constant $\kappa$ such that if
$$nq^2(1-q) \geq \kappa \log(p)$$
then there exists a positive constant $C$ such that with
probability larger than $1-C/p$
$$ d \simeq \sqrt{\frac{\log(p) \norm{x^*}_1}{n\min(q,1-q)}} +
  \frac{\log(p)\norm{x^*}_1}{n}+
  \frac{\log(p)}{nq(1-q)}.$$
\end{proposition}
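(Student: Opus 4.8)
The plan is to substitute sharp two-sided estimates of the two random ingredients $W$ and $\hat N$ into the definition \eqref{eq:berd} of $d$, and then check that the surviving terms collapse to the three claimed ones, the hypothesis $nq^2(1-q)\geq\kappa\log p$ being exactly what makes every remaining contribution lower order. I would first pin down $W$. Writing $T_k=\sum_{\ell=1}^n a_{\ell,k}$ for the column sums and $C_{uk}=\sum_{\ell=1}^n a_{\ell,u}a_{\ell,k}$ for the co-occurrence counts, and using $a_{\ell,k}^2=a_{\ell,k}$, one expands
$$\sum_{\ell=1}^n a_{\ell,u}\bigl(na_{\ell,k}-T_k\bigr)^2=(n^2-2nT_k)\,C_{uk}+T_k^2\,T_u,$$
which for $u=k$ reduces to $T_k(n-T_k)^2$, so that $w(u,k)$ is an explicit low-degree polynomial in the binomial statistics $T_u,T_k,C_{uk}$. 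Since $T_u,T_k$ are $\mathrm{Bin}(n,q)$ and, for $u\neq k$, $C_{uk}$ is $\mathrm{Bin}(n,q^2)$, a Bernstein bound and a union bound over the $O(p^2)$ pairs give, with probability $1-C/p$, each $T_k=nq(1+o(1))$ and each $C_{uk}=nq^2(1+o(1))$. Inserting these yields $w(u,k)\simeq \frac{1}{n(1-q)}$ for $u\neq k$ and $w(k,k)\simeq\frac{1}{nq}$, whence
$$W\simeq\frac{1}{n\min(q,1-q)}.$$
The upper bound on $W$ uses the union bound (all entries small), while the matching lower bound needs only one diagonal entry $w(k,k)$, which concentrates around its mean $\simeq 1/(nq)$.

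Next I would estimate $\hat N$. Its numerator is exactly the quantity appearing in \eqref{troisC} of Lemma~\ref{concPoisGen} applied with $R=\ones$ and $\theta=3\log p$, hence it is $\simeq \log p+\sum_{\ell}Y_\ell$; the sum $\sum_\ell Y_\ell$ has conditional mean $\sum_k T_k x^*_k$, and combining the concentration of $T_k$ around $nq$ with Poisson concentration gives $\sum_\ell Y_\ell\simeq nq\norm{x^*}_1+\log p$ on the good event. The denominator $nq-\sqrt{6nq(1-q)\log p}-\max(q,1-q)\log p$ is $\simeq nq$ under $nq^2(1-q)\geq\kappa\log p$, since both subtracted terms are then $o(nq)$. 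Therefore
$$\hat N\simeq \norm{x^*}_1+\frac{\log p}{nq}.$$

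Finally I would assemble $d$. Inserting $W\simeq 1/(n\min(q,1-q))$ and the estimate of $\hat N$ into \eqref{eq:berd}, the first term becomes $\simeq \sqrt{\frac{\log p\,\norm{x^*}_1}{n\min(q,1-q)}}+\frac{\log p}{n\min(q,1-q)}$, the middle term is $\simeq\frac{\log p}{nq(1-q)}$, and the $\frac{3\log p}{n}\hat N$ piece of the last term is $\simeq\frac{\log p\,\norm{x^*}_1}{n}+\frac{\log^2 p}{n^2q}$. It then remains to show that the leftover pieces, namely $\frac{\log p}{n\min(q,1-q)}$, $\frac{\log^2 p}{n^2 q}$, and the term $\frac{\max(q^2,(1-q)^2)}{n^2q(1-q)}\log^2 p\cdot\hat N$, are all dominated by $\frac{\log p}{nq(1-q)}$ and by the $\norm{x^*}_1$ terms once $nq^2(1-q)\geq\kappa\log p$; each such comparison reduces to a ratio of the form $\frac{(1-q)\log p}{nq}$ or $\frac{\log p}{n}$, both $\lesssim 1$ under the hypothesis. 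This produces the claimed $\simeq$ in both directions.

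The main obstacle is the two-sided control of $W$: it is a maximum over $\sim p^2$ correlated quadratic statistics, so obtaining constant-factor accuracy after the union bound forces one to track the fluctuations of the product $(n^2-2nT_k)C_{uk}$. The delicate point is that the leading-order sensitivity to $T_k$ cancels, because the derivative of the numerator in $T_k$ vanishes at the means $(T_k,C_{uk})=(nq,nq^2)$; hence the binding fluctuation is that of $C_{uk}$, and requiring it to be $o(1)$ relative to its mean after the $p^2$-fold union bound is precisely where $nq^2(1-q)\gtrsim\log p$ enters. Everything else, the $\hat N$ estimate and the final bookkeeping, is routine Bernstein and Poisson concentration.
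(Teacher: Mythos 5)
Your proposal is correct and follows the same overall architecture as the paper's argument: obtain two-sided estimates for $W$ and $\hat N$ on a high-probability event, substitute them into \eqref{eq:berd}, and check that all leftover terms are absorbed under $nq^2(1-q)\geq\kappa\log p$. Your treatment of $\hat N$ and the final bookkeeping coincide with the paper's (which proves $\|x^*\|_1\leq\hat N\lesssim\|x^*\|_1+\log(p)/(nq)$ via \eqref{troisC}, \eqref{unC} and \eqref{suma}). Where you genuinely diverge is the key step, the two-sided control of $W$. The paper writes $na_{\ell,k}-\sum_{\ell'}a_{\ell',k}=n(a_{\ell,k}-q)-\sum_{\ell'}(a_{\ell',k}-q)$ and splits $w(u,k)$ via the Young-type inequality into $w_1(u,k)+w_2(u,k)$, paying factors $(1\pm\epsilon)$ and $(1\pm 1/\epsilon)$; it then applies Bernstein directly to the i.i.d.\ bounded summands $a_{\ell,u}(a_{\ell,k}-q)^2$ (with the exact identity $a_{\ell,k}(a_{\ell,k}-q)^2=a_{\ell,k}(1-q)^2$ on the diagonal) and shows $w_2$ is negligible. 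You instead expand $w(u,k)$ exactly as $(n^2-2nT_k)C_{uk}+T_k^2T_u$ in the binomial sufficient statistics and concentrate each of $T_u,T_k,C_{uk}$. Your route is algebraically cleaner (an exact identity, no $\epsilon$-bookkeeping) and the observation that the linear sensitivity in $T_k$ cancels at the means is correct, though not actually essential: even without it, the term $2n^2q^2|\delta_{T_k}|$ is of relative order $\sqrt{q\log p/(n(1-q))}\lesssim\kappa^{-1/2}$ under the hypothesis. The paper's route avoids having to notice any cancellation at all, at the cost of the $\epsilon$-splitting. One small correction: your claim that the lower bound on $W$ "needs only one diagonal entry" is only right for $q\leq 1/2$; when $q>1/2$ the maximum is attained off the diagonal ($w(u,k)\simeq 1/(n(1-q))$ versus $w(k,k)\simeq 1/(nq)$), so you need the lower half of your two-sided bound on $C_{uk}$ for at least one off-diagonal pair — which your argument already supplies, so this is a misattribution rather than a gap.
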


\begin{proposition}\label{dkBern}  
With the same notations and assumptions as Proposition \ref{dBern},
there exists absolute constants  $c,c^{\prime}$ such that with probability larger than $1-c^{\prime}/p$, the choice (depending on $k$)
\begin{align} d_k =   \sqrt{6\log(p)}  \left(\sqrt{\frac{3\log(p)
  }{2(n-1)^2q^2(1-q)^2}}+\sqrt{\frac{5\log(p)}{2(n-1)^2q^2(1-q)^2}+
  V_k^\T Y }\right) +\frac{\log(p)}{(n-1)q(1-q)}+ \\+c
  \left(\frac{3\log(p)}{n} +
  \frac{9\max(q^2,(1-q)^2)}{n^2q(1-q)}\log(p)^2\right)  \hat{N},
\end{align}
with the vector $V_k$ of size $n$ given by $$V_{k,\ell}=\left(\frac{na_{\ell,k}-\sum_{\ell'=1}^\nObs a_{\ell',k}}{n(n-1)q(1-q)}\right)^2$$
satisfies Assumption \asref{as:dk}{\{d_k\}_k}.
\revision{Moreover one can take $c=126$ as long as $n\geq 20$.}
\end{proposition}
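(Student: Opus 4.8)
The plan is to exploit the decomposition $\tA^\T(\tY-\tA x^*) = T_1+T_2$ established in Appendix~\ref{scalingBern}, bounding the two pieces separately and matching each to a group of terms in the definition of $d_k$. Recall that $T_{1,k}=R_k^\T(Y-Ax^*)$ has zero conditional mean given $A$, whereas $T_2=\mathbb{M}x^*$ depends only on $A$ and has entries that are degenerate U-statistics of order two. The stochastic term $T_{1,k}$ will be handled by Lemma~\ref{concPoisGen} and the bias term $T_{2,k}$ by a uniform control of $\mathbb{M}$, the final ingredient being that $\hat N \ge \norm{x^*}_1$ with high probability so that $\hat N$ may legitimately replace $\norm{x^*}_1$.

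First I would treat $T_{1,k}$. Reading off the $\ell$-th entry of the row vector $R_k$ from the expression for $T_1$, one finds $R_{k,\ell}=\frac{n a_{\ell,k}-\sum_{\ell'}a_{\ell',k}}{n(n-1)q(1-q)}$, so that the vector $V_k$ in the statement is exactly $R_{2,k}=(R_{k,\ell}^2)_\ell$ in the notation of Lemma~\ref{concPoisGen}, with $V_k^\T Y=R_{2,k}^\T Y$. Since $a_{\ell,k}\in\{0,1\}$ forces $|n a_{\ell,k}-\sum_{\ell'}a_{\ell',k}|\le n-1$, the deterministic sup-norm bound $b_k=\norm{R_k}_\infty\le \frac{1}{(n-1)q(1-q)}$ holds. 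Applying \eqref{quatreC} to $R_k$ with $\theta=3\log p$ and substituting this upper bound for $b_k$ shows that, with conditional probability at least $1-3p^{-3}$, $|T_{1,k}|$ is at most the first two terms of $d_k$ (one checks $\sqrt{2\theta}=\sqrt{6\log p}$, $\tfrac{b_k^2\theta}{2}\le\tfrac{3\log p}{2(n-1)^2q^2(1-q)^2}$, and $\tfrac{b_k\theta}{3}\le\tfrac{\log p}{(n-1)q(1-q)}$); a union bound over $k$ controls all coordinates at the cost of probability $3p^{-2}$.

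Next I would bound $T_{2,k}=\sum_{k'}\mathbb{M}_{k,k'}x^*_{k'}$ using $|T_{2,k}|\le\norm{x^*}_1\max_{k'}|\mathbb{M}_{k,k'}|$ (valid since $x^*\ge 0$). Writing $S_k=\sum_\ell(a_{\ell,k}-q)$, one has $\mathbb{M}_{k,k'}=\frac{-1}{(n-1)nq(1-q)}\big(S_kS_{k'}-\sum_\ell(a_{\ell,k}-q)(a_{\ell,k'}-q)\big)$; a Bernstein bound on each $S_k$ (with $\theta=3\log p$, union over $k$) gives $|S_k|\lesssim\sqrt{nq(1-q)\log p}+\max(q,1-q)\log p$, and squaring produces a bound on $|S_kS_{k'}|$ whose leading and higher-order parts, after division by $(n-1)nq(1-q)$, are precisely $\frac{\log p}{n}$ and $\frac{\max(q^2,(1-q)^2)\log^2 p}{n^2q(1-q)}$, the diagonal correction being lower order and absorbed into the free constant $c$. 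This is the step I expect to be the main obstacle: the diagonal case $k=k'$, where $S_k^2$ and $\sum_\ell(a_{\ell,k}-q)^2$ are individually non-centered, must be handled with care, and one must verify that the cross term $\sqrt{nq(1-q)\log p}\cdot\max(q,1-q)\log p$ is dominated by the two retained terms.

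Finally I would establish $\hat N\ge\norm{x^*}_1$ with high probability. Applying \eqref{troisC} with $R=\ones_n$ (so $v=\ones_n^\T Ax^*=\sum_\ell(Ax^*)_\ell$ and $R_2^\T Y=\sum_\ell Y_\ell$) and $\theta=3\log p$ bounds $v$ above by the numerator of $\hat N$, while a one-sided Bernstein bound on each column sum $\sum_\ell a_{\ell,k}$, combined with $\ones_n^\T Ax^*\ge\big(\min_k\sum_\ell a_{\ell,k}\big)\norm{x^*}_1$, yields the denominator of $\hat N$; the hypothesis $nq\ge 12\max(q,1-q)\log p$ guarantees this denominator is positive. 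Hence $|T_{2,k}|\le\norm{x^*}_1\max_{k'}|\mathbb{M}_{k,k'}|\le c\big(\frac{3\log p}{n}+\frac{9\max(q^2,(1-q)^2)\log^2 p}{n^2q(1-q)}\big)\hat N$, the third term of $d_k$. A concluding union bound over the three events gives $|\tA^\T(\tY-\tA x^*)|_k\le d_k$ for all $k$ simultaneously with probability at least $1-c'/p$, which is Assumption~\asref{as:dk}{\{d_k\}_k}.
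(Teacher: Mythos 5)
Your proof is correct, and its skeleton coincides with the paper's: the same decomposition $\tA^\T(\tY-\tA x^*)=T_1+T_2$ from Appendix~\ref{scalingBern}, the same application of \eqref{quatreC} of Lemma~\ref{concPoisGen} to $R_k$ with $\theta=3\log p$ and $b\le \tfrac{1}{(n-1)q(1-q)}$ to produce the first two terms of $d_k$, and the same construction of $\hat N$ via \eqref{troisC} with $R=\ones_{n\times 1}$ together with the Bernstein bound \eqref{suma} on the column sums. The one place where you genuinely diverge is the control of $T_2$. The paper treats each entry $\mathbb{M}_{k,k'}$ as a completely degenerate U-statistic of order two and invokes the concentration inequality of \cite{ustats}, computing the four constants $A_{\mathbb{M}},B_{\mathbb{M}},C_{\mathbb{M}},D_{\mathbb{M}}$ to arrive at \eqref{majoM}. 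You instead write the off-diagonal double sum as $S_kS_{k'}-\sum_\ell(a_{\ell,k}-q)(a_{\ell,k'}-q)$ and bound it by squaring a scalar Bernstein bound on each $S_k$, checking that the cross term and the subtracted correction (including the non-centered diagonal case, where $\sum_\ell(a_{\ell,k}-q)^2$ concentrates around $nq(1-q)$ and contributes only $O(1/n)$ after normalization) are dominated by the two retained terms $\tfrac{\log p}{n}$ and $\tfrac{\max(q^2,(1-q)^2)\log^2 p}{n^2q(1-q)}$. Since the third term of $d_k$ carries a free absolute constant $c$, your elementary route yields a bound of exactly the same order as \eqref{majoT2} and is perfectly adequate here; what it buys is the removal of the heaviest external tool in the argument. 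What the U-statistics machinery buys in exchange is a single inequality that handles all entries of $\mathbb{M}$ in a unified way (no case split on $k=k'$) and that the paper reuses verbatim in the convolution example, where the analogous quantity $\Nu(d)$ does not admit such a clean product decomposition.
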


\begin{proposition}\label{bounddkBern}
  There exists some absolute constant $\kappa$ such that if
$$nq^2(1-q) \geq \kappa \log(p)$$
then there exists a positive $C$ such that with probability larger than $1-C/p$
\begin{align*} 
d_k \simeq \sqrt{\log(p)\left[\frac{x^*_k}{nq}+\frac{\sum_{u\not = k} x^*_u}{n(1-q)}\right]} +  \frac{\log(p)\norm{x^*}_1}{n}+   \frac{\log(p)}{nq(1-q)}.
\end{align*}
\end{proposition}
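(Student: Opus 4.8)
The plan is to read off the order of magnitude of the explicit weight $d_k$ of \eqref{eq:berdk}, whose validity (Assumption~\asref{as:dk}{\{d_k\}_k}) is already granted by Proposition~\ref{dkBern}. The key observation is that the only genuinely random ingredients in \eqref{eq:berdk} are the empirical variance proxy $V_k^\T Y$ and the estimator $\hat N$ of $\norm{x^*}_1$: the coefficients $\frac{\log p}{(n-1)^2q^2(1-q)^2}$ and $\frac{\log p}{(n-1)q(1-q)}$ are deterministic and, once simplified, contribute exactly the floor term $\frac{\log p}{nq(1-q)}$ (they arise from taking $\theta=\square\log p$ and the deterministic bound $b=\norm{R}_\infty\lesssim\frac{1}{nq(1-q)}$ in Lemma~\ref{concPoisGen}). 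Since $d_k$ is a sum of nonnegative terms, this floor is automatically a lower bound, and it remains to pin down the sizes of $V_k^\T Y$ and $\hat N$ from both sides.

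I would first control $V_k^\T Y=R_2^\T Y$, where $R_{k,\ell}=\frac{na_{\ell,k}-\sum_{\ell'}a_{\ell',k}}{n(n-1)q(1-q)}$ is the vector attached to coordinate $k$ in Lemma~\ref{concPoisGen}, so that $\E(V_k^\T Y\mid A)=v=R_2^\T Ax^*$. Conditionally on $A$, $V_k^\T Y$ is a weighted Poisson sum, so \eqref{troisC}--\eqref{deuxC} (with $\theta=\square\log p$) pin it two-sidedly to $v$ up to the very corrections already present in \eqref{eq:berdk}. A direct moment computation over the Bernoulli draw of $A$ gives $\E_A(V_{k,\ell}a_{\ell,k})\simeq\frac{1}{n^2q}$ and, for $u\neq k$, $\E_A(V_{k,\ell}a_{\ell,u})=q\,\E_A(V_{k,\ell})\simeq\frac{1}{n^2(1-q)}$, whence
$$\E_A(v)\simeq\frac{x^*_k}{nq}+\frac{\sum_{u\neq k}x^*_u}{n(1-q)}.$$
A concentration argument over the independent rows of $A$ (in the spirit of the proof of Proposition~\ref{GBerq}, with $\theta=\square\log p$) then gives two-sided concentration of $v$ around $\E_A(v)$ uniformly in $k$, and multiplying by $\sqrt{6\log p}$ produces the leading term $\sqrt{\log p\,\bigl[\tfrac{x^*_k}{nq}+\tfrac{\sum_{u\neq k}x^*_u}{n(1-q)}\bigr]}$.

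Next I would control $\hat N$. As $\sum_\ell Y_\ell$ is Poisson with conditional mean $\sum_u x^*_u\sum_\ell a_{\ell,u}$, and each column sum concentrates around $nq$ by a binomial Bernstein bound, the Poisson concentration \eqref{troisC} together with $nq^2(1-q)\gtrsim\log p$ show the denominator of $\hat N$ is $\simeq nq$ and the numerator $\simeq nq\norm{x^*}_1+\log p$, hence $\hat N\simeq\norm{x^*}_1+\frac{\log p}{nq}$ two-sidedly. Substituting into the last term of \eqref{eq:berdk} yields $\frac{\log p\,\norm{x^*}_1}{n}$ together with the residuals $\frac{\log^2 p}{n^2q}$ and $\frac{\max(q^2,(1-q)^2)\log^2 p}{n^2q(1-q)}\hat N$; since the hypothesis forces $nq\gtrsim(1-q)\log p$, both residuals are dominated by $\frac{\log p}{nq(1-q)}$ and by $\frac{\log p\,\norm{x^*}_1}{n}$ respectively. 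Collecting the three surviving contributions gives the claimed $\simeq$, and a union bound over $k\in\{0,\ldots,p-1\}$ (the factor $p$ absorbed by $\theta=\square\log p$) yields the stated probability $1-C/p$.

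The hard part will be the lower bound together with the regime bookkeeping. The upper bound follows by inserting the upper estimates of $V_k^\T Y$ and $\hat N$, but matching it from below requires that neither $v$ nor $\hat N$ drop appreciably below its mean, and that the additive $\log p$ corrections built into \eqref{eq:berdk} never inflate $d_k$ past the stated order. The most delicate point is that $\hat N$ over-estimates $\norm{x^*}_1$ by the excess $\frac{\log p}{nq}$; multiplied by $\frac{\log p}{n}$ this must stay below the floor $\frac{\log p}{nq(1-q)}$, which is precisely where $nq^2(1-q)\gtrsim\log p$ is used. Keeping the three regimes of $\norm{x^*}_1$ --- in which each of the three target terms dominates in turn --- mutually consistent, while controlling everything uniformly over the $p$ coordinates, is the main obstacle.
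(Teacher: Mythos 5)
Your proposal is correct and follows essentially the same route as the paper: a two-sided Poisson concentration of $V_k^\T Y$ around $v_k=V_k^\T A x^*$ via Lemma~\ref{concPoisGen}, the identification $v_k=\sum_u w(u,k)x^*_u$ with $w(k,k)\simeq (nq)^{-1}$ and $w(u,k)\simeq (n(1-q))^{-1}$ obtained by Bernstein concentration over the Bernoulli entries under $nq^2(1-q)\gtrsim\log p$, and the two-sided control $\norm{x^*}_1\le\hat N\lesssim\norm{x^*}_1+\log(p)/(nq)$ already used for Proposition~\ref{bounddBern}. The only cosmetic difference is that the paper invokes \eqref{unC} and \eqref{troisC} rather than \eqref{deuxC} and \eqref{troisC}, and leans on its earlier subsection bounding the $w(u,k)$'s instead of redoing the moment computation inline.
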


\subsubsection{Assumption~\ref{as:dk} holds (proof of
  Propositions~\ref{dBern} and~\ref{dkBern})}
As shown in Appendix~\ref{scalingBern},
$$(\tA^\T (\tY-\tA x^*))_k \le T_{1,k} + T_{2,k}.$$
To derive the constant weight of Proposition~\ref{dBern}, we use a
bound on
$\|T_1\|_\infty + \|T_2\|_\infty$. To derive the non-constant
weights of Proposition~\ref{dkBern}, we use a bound on $|T_{1,k}| +
\|T_2\|_\infty$. These bounds are derived in this section.

\paragraph{Concentration of $T_2$}
Each element of the matrix $\mathbb{M}$ is a degenerate U-statistics of order $2$ of the form $2 U$ with $ U= \sum_{\ell>\ell'} g(a_{\ell,k},a_{\ell',k'})$ to which one can apply \cite{ustats}. Let us compute the different quantities involved in this concentration formula.

Since $q(1-q)\leq (q^2+(1-q)^2)/2 \leq \max(q^2,(1-q)^2),$ a deterministic upper bound of $g$ does not depend on $k,k'$ and is given by
$$A_{\mathbb{M}}=\frac{\max(q^2,(1-q)^2)}{n(n-1)q(1-q)}.$$
On the other hand for any $a\in \{0,1\}$,
 $$\E(g^2(a_{\ell,k},a))= \frac{(a-q)^2}{n^2(n-1)^2 q(1-q)}.$$
Therefore $C^2_{\mathbb{M}}= \frac{1}{2n(n-1)}$ and $$B^2_{\mathbb{M}}= \frac{\max(q^2,(1-q)^2)}{n^2(n-1)q(1-q)}.$$
Finally $D_{\mathbb{M}}$ should be chosen as an upper bound of
$$\E\left(\sum_{\ell>\ell'} g(a_{\ell,k},a_{\ell',k'}) c_\ell(a_{\ell,k})b_{\ell'}(a_{\ell',k'})\right),$$
for all choice of functions $c_\ell$, $b_{\ell'}$ such that $\E(\sum_{\ell=2}^n c_\ell(a_{\ell,k})^2) \leq 1$ and $\E(\sum_{\ell'=1}^{n-1} b_{\ell'}(a_{\ell',k'})^2) \leq 1$.
But
$$\sum_{\ell'=1}^\ell \E\left((a_{\ell',k'}-q) b_{\ell'}(a_{\ell',k'})\right)\leq \sqrt{\sum_{\ell'} \E(((a_{\ell',k'}-q)^2)}\sqrt{\sum_{\ell'} \E(b_{\ell'}(a_{\ell',k'})^2)} \leq \sqrt{n q(1-q)}.$$
By doing the same for the terms in $a_{\ell,k}$, $D_{\mathbb{M}}=\frac{n q (1-q)}{n(n-1)q(1-q)}=\frac{1}{n-1}$ works.
\revision{By \cite[Theorem 3.4.8]{GineNickl}, for all $\theta>0$, with probability larger than  $1-c_1 p^2 e^{-\theta}$, for some absolute constant $c_1$ ($c_1=5.4$ works), we have that for all $k,k'$
\begin{eqnarray*}
|\mathbb{M}_{k,k'}| &\leq & 8 C_{\mathbb{M}}\sqrt{\theta}+8\sqrt{2} D_{\mathbb{M}}\theta + 216  (B_{\mathbb{M}}\theta^{3/2}+ A_{\mathbb{M}}\theta^2) \\
&\leq &  \frac{8\sqrt{\theta}}{\sqrt{2 n (n-1)}} + \frac{8\sqrt{2}\theta}{ (n-1)} + 216 \left(\sqrt{\frac{\theta}{n} \frac{\max(q^2,(1-q)^2) \theta^2}{n(n-1) q (1-q)} }+ \frac{\max(q^2,(1-q)^2) \theta^2}{n(n-1) q (1-q)}\right)
\end{eqnarray*}
Let us take $\theta>1$ so that $\sqrt{\theta}\leq \theta$. By using the classical inequality $ab \leq (a^2+b^2)/2$, we end up with 
$$ |\mathbb{M}_{k,k'}| \leq \left(\frac{8}{\sqrt{2}}+ 8 \sqrt{2}\right) \frac{\theta}{n-1} + 108 \frac{\theta}{n}+ 108 \frac{\max(q^2,(1-q)^2) \theta^2}{n(n-1) q (1-q)}.$$}

Therefore there exists some absolute constants $c_1$ and $c_2$ such
that as soon as $n\geq 2$ \revision{and $\theta>1$}, with probability
larger than $1-c_1 p^2 e^{-\theta}$, for all $k,k'$
\begin{equation}\label{majoM}
|\mathbb{M}_{k,k'}| \leq c_2 \left(\frac{\theta}{n} + \frac{\max(q^2,(1-q)^2)}{n^2q(1-q)}\theta^2\right),
\end{equation}
\revision{where $c_2=126$ works as soon as $n\geq 20$.}
Therefore on the same event
\begin{equation}\label{majoT2}
\norm{T_2}_\infty \leq c_2 \left(\frac{\theta}{n} + \frac{\max(q^2,(1-q)^2)}{n^2q(1-q)}\theta^2\right) \norm{x^*}_1.
\end{equation}

\paragraph{Concentration around $\norm{x^*}_1$}
Since $\norm{x^*}_1$ is unknown in the previous inequality, if one wants to upper bound $T_2$, we need  to estimate it.

Applying \eqref{troisC} of Lemma \ref{concPoisGen} with $R=\ones_{\nObs\times 1}$ gives that with probability larger than $1-e^{-\theta}$,
$$\bar{x}_a=\sum_{\ell,k} a_{\ell,k} x^*_k \leq \left(\sqrt{\frac{\theta }{2}}+\sqrt{\frac{5\theta}{6}+ \sum_{\ell=1}^n Y_\ell }\right)^2.$$
But by using Bernstein's inequality, with probability larger than $1-2pe^{-\theta}$, for all $k$,
\begin{equation}\label{suma}
|\sum_{\ell=1}^n (a_{\ell,k}-q)| \leq C_{n,\theta}=\sqrt{2nq(1-q)\theta}+\max(q,(1-q))\frac{\theta}{3}
\end{equation}
Hence on this event,
$$(nq-C_{n,\theta})\norm{x}_1\leq \bar{x}_a.$$
So the first assumption on the range of $(n,q)$ is to assume that $nq>C_{n,\theta}$, which is implied by
\begin{equation}\label{nq1}
nq\geq 4 \max(q,1-q)\theta
\end{equation}
In this case, with probability larger than $1-(2p+1)e^{-\theta}$,
\begin{equation}\label{majonormx}
\norm{x}_1\leq \hat{N}_\theta:=\frac{1}{nq-C_{n,\theta}} \left(\sqrt{\frac{\theta }{2}}+\sqrt{\frac{5\theta}{6}+ \sum_{\ell=1}^n Y_\ell }\right)^2.
\end{equation}
Hence there exists some absolute constant $c_3$ such that on an event
of probability larger than $1-c_3 p^2 e^{-\theta}$,
\begin{equation}\label{T2bien}
\norm{T_2}_\infty \leq c_2 \left(\frac{\theta}{n} + \frac{\max(q^2,(1-q)^2)}{n^2q(1-q)}\theta^2\right)  \hat{N}_\theta.
\end{equation}

\paragraph{Upper-bound for $T_1$}
The upper bound on $T_2$ does not depend on $k$, but it is just a residual term. The upper bound for $T_1$ gives the main tendency and its behavior may be refined $k$ by $k$ leading to a weight $d_k$ that depends on $k$.
Recall that for fixed $k$, $T_{1,k}= R_k^\T(Y-Ax^*)$ with for all $\ell=1,\ldots,\nObs$,
$$ R_{k,\ell}= \frac{na_{\ell,k}-\sum_{\ell'=1}^\nObs a_{\ell',k}}{n(n-1)q(1-q)}.$$
By \eqref{deuxC} of Lemma \ref{concPoisGen}, on an event of probability larger than $1-2pe^{-\theta}$,
$$|T_{1,k}| \leq \sqrt{2 V_{k}^\T A x^* \theta} + \frac{\norm{R_k}_\infty \theta}{3},$$
with $V_{k,\ell}=R_{k,\ell}^2$.
But since the $a_{\ell,k}$ have values in $\{0,1\}$, one has that
$$\norm{R_k}_\infty\leq \frac{1}{(n-1)q(1-q)}.$$
Moreover,
$$V_{k}^\T A x^* \leq W \norm{x^*}_1,$$
with $$W=\max_{u,k=1,\ldots,p} w(u,k)$$ and
$$w(u,k)=\frac{1}{n^2(n-1)^2 q^2(1-q)^2}\sum_{\ell=1}^\nObs a_{\ell,u} \left(n a_{\ell,k} -\sum_{\ell'=1}^\nObs a_{\ell',k}\right)^2.$$
Combined with \eqref{majonormx}, this gives that
\begin{equation}\label{T1bien1}
\norm{T_1}_\infty \leq \sqrt{2W\theta}  \sqrt{\hat{N}} + \frac{\theta}{3(n-1)q(1-q)}
\end{equation}
This combined with \eqref{T2bien} and the choice $\theta=3\log(p)$ \revision{(which is larger than 1 since $p\geq 2$)} gives Proposition \ref{dBern}.
On the other hand, one could have applied \eqref{quatreC} of Lemma \ref{concPoisGen} to obtain that on an event of probability larger than $1-3pe^{-\theta}$,
$$|T_{1,k}| \leq \left(\sqrt{\frac{\theta }{2(n-1)^2q^2(1-q)^2}}+\sqrt{\frac{5\theta}{6(n-1)^2q^2(1-q)^2}+ V_k^\T Y }\right) \sqrt{2 \theta}+\frac{\theta}{3(n-1)q(1-q)}.$$
Once again, this combined with \eqref{T2bien} and the choice $\theta=3\log(p)$ gives Proposition \ref{dkBern}.

\subsubsection{Bounds on the $w(u,k)$'s}
First  let us remark that if we denote
$$w_1(u,k)= \frac{1}{(n-1)^2q^2(1-q)^2} \sum_{\ell=1}^\nObs a_{\ell,u}(a_{\ell,k}-q)^2$$
and
$$w_2(u,k)= \frac{1}{n^2(n-1)^2q^2(1-q)^2} \left(\sum_{\ell=1}^\nObs a_{\ell,u}\right)\left(\sum_{\ell'=1}^\nObs(a_{\ell,k}-q)\right)^2,$$
Then for all $\epsilon \in(0,1)$,
$$(1-\epsilon) w_1(u,k)+(1-\frac{1}{\epsilon})w_2(u,k) \leq w(u,k) \leq (1+\epsilon) w_1(u,k)+(1+\frac{1}{\epsilon})w_2(u,k).$$
In the sequel we consequently  need to find an upper-bound for $w_2(u,k)$ and a lower and upper bound on $w_1(u,k)$  to obtain bounds for $w(u,k)$.

\paragraph{Upper bound for $w_2(u,k)$}

By \eqref{suma} and remarking that $\max(q,1-q)\leq 1$, on an event of probability larger than $1-2pe^{-\theta}$,
\begin{align*}
w_2(u,k)\leq &  \frac{nq+\sqrt{2nq(1-q)\theta}+\frac{\theta}{3}}{n^2(n-1)^2q^2(1-q)^2} \left(\sqrt{2nq(1-q)\theta}+\frac{\theta}{3}\right)^2\\
\leq &\square \frac{n^2q^2(1-q)\theta + nq\theta^2+ (nq(1-q)\theta)^{3/2}+\theta^3}{n^4q^2(1-q)^2}\\
\leq & \square\left (\frac{\theta}{n^2(1-q)}+\frac{\theta^2}{n^3q(1-q)^2}+\frac{\theta^{3/2}}{n^{5/2}q^{1/2}(1-q)^{1/2}}+\frac{\theta^3}{n^4q^2(1-q)^2}\right)
\end{align*}
 If one assumes that
 \begin{equation}\label{nq2}
 n q(1-q) \geq \theta,
 \end{equation}
then the leading term in the previous expansion is the first one and 
\begin{equation}\label{w2bien}
w_2(u,k)\leq \square \frac{\theta}{n^2(1-q)}
\end{equation}
Now for the control of $w_1(u,k)$, if $u=k$ then on can rewrite
\begin{align*}
w_1(k,k)=& \frac{1}{(n-1)^2q^2(1-q)^2} \sum_{\ell=1}^\nObs a_{\ell,k}(a_{\ell,k}-q)^2\\
=&\frac{1}{(n-1)^2q^2(1-q)^2} \sum_{\ell=1}^\nObs (a_{\ell,k}^3-2qa_{\ell,k}^2+q^2a_{\ell,k})\\
=&\frac{1}{(n-1)^2q^2(1-q)^2} \sum_{\ell=1}^\nObs a_{\ell,k}(1-q)^2\\
=&\frac{1}{(n-1)^2q^2}  \sum_{\ell=1}^\nObs a_{\ell,k}
\end{align*}
So by \eqref{suma}, on the same event as before, because of \eqref{nq2}
\begin{equation}\label{w1kkbien}
\left|w_1(k,k)-\frac{n}{(n-1)^2q}\right|\leq \frac{\sqrt{2nq(1-q)\theta}+\frac{\theta}{3}}{(n-1)^2q^2} \leq\square \frac{(1-q)^{1/2}\theta^{1/2}}{n^{3/2}q^{3/2}}.
\end{equation}
On the other hand, if $u\not= k$, let us apply Bernstein inequality to $Z^{u,k}_\ell=a_{\ell,u}(a_{\ell,k}-q)^2$.
The expectation of $Z^{u,k}_\ell$ is given by 
$$\E(Z^{u,k}_\ell)=q^2(1-q),$$
whereas its variance is
\begin{align*}
\Var(Z^{u,k}_\ell)=& \E([Z^{u,k}_\ell]^2)-q^4(1-q)^2\\
=& \E(a_{\ell,u}^2)\E([a_{\ell,k}-q]^4)-q^4(1-q)^2\\
=&q\E(a_{\ell,k}^4-4qa_{\ell,k}^3+6q^2a_{\ell,k}^2-4q^3a_{\ell,k}+q^4)-q^4(1-q)^2\\
=&q(q-4q^2+6q^3-3q^4)-q^4(1-q)^2\\
=&q^2(1-q)(1-3q+3q^2)-q^4(1-q)^2\\
=&q^2(1-q)(1-3q+2q^2+q^3)\\
\leq &  q^2(1-q).
\end{align*}
Moreover $|Z^{u,k}_\ell|$ is bounded by 1. So Bernstein inequality gives that with probability larger than $1-2p(p-1)e^{-\theta}$,
\begin{equation}\label{monZ}
\left|\sum_{\ell=1}^n Z^{u,k}_\ell-n q^2(1-q)\right| \leq \sqrt{2 n q^2(1-q) \theta}+\frac{\theta}{3}.
\end{equation}
Hence on the same event, because of \eqref{nq2}, if we additionally assume that
\begin{equation}\label{nq3}
nq^2(1-q)\geq \theta
\end{equation}
\begin{equation}\label{w1bien}
\left|w_1(u,k)-\frac{n}{(n-1)^2(1-q)}\right| \leq \frac{\sqrt{2nq^2(1-q)\theta}+\frac{\theta}{3}}{(n-1)^2q^2(1-q)^2}\leq \square \frac{\theta^{1/2}}{n^{3/2}q (1-q)^{3/2}}.
\end{equation}
So finally there is a constant $\kappa(\epsilon)$ such that if
\begin{equation}\label{nq3bis}
nq^2(1-q)\geq \kappa(\epsilon) \theta
\end{equation}
then on this event of probability larger than $1-\square p^2 e^{-\theta}$, 
$$(1-\epsilon) \frac{1}{nq} \leq w_1(k,k)\leq (1+\epsilon) \frac{1}{nq},$$
and if $u\not =k$
$$(1-\epsilon) \frac{1}{n(1-q)} \leq w_1(u,k)\leq (1+\epsilon) \frac{1}{n(1-q)}.$$
Hence since \eqref{w2bien} holds, on the same event,
$$(1-\epsilon)^2 \frac{1}{nq} + (1-\frac{1}{\epsilon}) \square \frac{\theta}{n^2(1-q)}  \leq w(k,k)\leq (1+\epsilon)^2 \frac{1}{nq} + (1+\frac{1}{\epsilon}) \square \frac{\theta}{n^2(1-q)}.$$
 This implies up to the eventual replacement of $\kappa(\epsilon)$ by a bigger constant still depending on $\epsilon$ that
\begin{equation}\label{wkkbien} 
 (1-\epsilon)^3 \frac{1}{nq}  \leq w(k,k)\leq (1+\epsilon)^3 \frac{1}{nq},
 \end{equation}
and in the same way that for $u\not = k$ that
\begin{equation}\label{wukbien}
(1-\epsilon)^3 \frac{1}{n(1-q)}  \leq w(u,k)\leq (1+\epsilon)^3 \frac{1}{n(1-q)}.
\end{equation}

\subsubsection{Control of the constant weight (proof of Proposition~\ref{bounddBern})}
Applying \eqref{unC} of Lemma \ref{concPoisGen} with $R=\ones_{n\times1}$ gives that with probability larger than $1-e^{-\theta}$,
$$\sum_{\ell=1}^n Y_\ell\leq \square\left(\sum_{\ell,k}a_{\ell,k}x^*_k + \theta\right).$$
Then by using \eqref{suma}, we get that on an event of probability larger than $1-\square p e^{-\theta}$
$$\sum_{\ell=1}^n Y_\ell\leq \square\left( (nq+C_{n,\theta}) \norm{x^*}_1 + \theta\right).$$
This implies that on the same event
$$\hat{N} \leq \square \frac{nq+C_{n,\theta}}{nq-C_{n,\theta}} \norm{x^*}_1 + \square \frac{\theta}{nq-C_{n,\theta}}.$$
By eventually increasing $\kappa(\epsilon)$ again, we have that under \eqref{nq3bis}
$$\hat{N} \leq \square \frac{1+\epsilon}{1-\epsilon} \norm{x^*}_1 + \square \frac{\theta}{(1-\epsilon)nq}.$$
Hence, combining with \eqref{majonormx}, on an event of probability larger that $1-\square p e^{-\theta}$,
$$\norm{x^*}_1\leq \hat{N} \leq \square \frac{1+\epsilon}{1-\epsilon} \norm{x^*}_1 + \square \frac{\theta}{(1-\epsilon)nq}.$$
Hence, using again \eqref{nq3bis}, with eventually a larger $\kappa$ and fixing $\epsilon=1/2$ say, gives
$$ \square \left[\sqrt{W\theta \norm{x^*}_1} +   \frac{\theta\norm{x^*}_1}{n}+ \frac{\theta}{nq(1-q)}\right] \leq d \leq \square \left[\sqrt{W\theta \left(\norm{x^*}_1+\frac{\theta}{nq}\right)} +  \frac{\theta\norm{x^*}_1}{n}+   \frac{\theta}{nq(1-q)}\right]
$$
But by the previous computations, $W$ is of the order of $\frac{1}{n \min(q,1-q)}$, which gives Proposition \ref{bounddBern} with $\theta=3\log(p)$.

\subsubsection{Control of the non-constant weights (proof of Proposition~\ref{bounddkBern})}
Similarly, applying \eqref{unC} and \eqref{troisC} of Lemma
\ref{concPoisGen} to $V_k^\T Y$ with $V_k$ for all $k$ gives that with probability larger than $1-\square pe^{-\theta}$, for all $k$
$$V_k^\T A x^* \leq 
\left(\sqrt{\frac{\theta}{2(n-1)^2q^2(1-q)^2}}+\sqrt{\frac{5\theta}{6(n-1)^2
> q^2(1-q)^2}+V_k^\T Y}\right)^2 \leq \square\left(V_k^\T A x^* + \frac{\theta}{n^2q^2(1-q)^2}\right).$$
But $V_k^\T A x^*=\sum_{u=1}^p w(u,k)x^*_u$ which is of the order of
$$\frac{1}{nq} x^*_k + \frac{1}{n(1-q)} \sum_{u\not=k} x^*_u.$$
This gives  Proposition \ref{bounddkBern} with $\theta=3\log(p)$.

\subsection{Proof of Proposition~\ref{prop:OLS}}\label{pf:OLS}

\begin{proof}{Proposition~\ref{prop:OLS}}
By denoting $\tA_{S^*}$ the matrix of size $n\times |S^*|$  whose columns are the columns of $\tA$ corresponding to non-zero elements of $x^*$, we have for any $k\in S^*$, 
$$\xls_k=((\tA_{S^*}^H\tA_{S^*})^{-1}\tA_{S^*}^H\tY )_k=(\tG_{S^*}^{-1}\tA_{S^*}^H\tY )_k,$$
where $\tG_{S^*}=\tA_{S^*}^H\tA_{S^*}.$
Therefore, by setting $\xls_k=0$ for $k\notin S^*$, we have 
$$
\| \xls-x^*\|_2^2=\|\tG_{S^*}^{-1}\tA_{S^*}^H(\tY -\tA x^*)\|_2^2.
$$
Theorem 2.4 in \cite{mendelson2008uniform} shows that there exist constants $c_1,c_2,c_3,C > 0$ such that for any $\delta_{s^*}
\in (0,1/2)$,
if
\begin{equation}\label{BerqRIP} 
s^* \le \frac{c_1 \delta_{s^*}^2 n}{\alpha_q^4 \log(c_2 p \alpha_q^4
  / \delta_{s^*}^2 n)}, \qquad \mbox{ and } \qquad n \ge
  \frac{\alpha_q^4}{c_3 \delta_{s^*}^2} \log p
\end{equation}
where $$\alpha_q \deq \begin{cases}
\sqrt{\frac{3}{2 q(1-q)}}, & q\neq
    1/2 \\
1,& q = 1/2 \end{cases},$$
then for any $s^*$-sparse $x$,
$$(1-\delta_{s^*}) \|x\|_2^2 \le \|\tA x\|_2^2 \le (1+\delta_{s^*})\|x\|_2^2$$
with probability exceeding $1-C/p$ for a universal positive constant $C$.
Now, assume that \eqref{sparse-ber-L} is satisfied. Then,  \eqref{BerqRIP} is satisfied for $\delta_{s^*}=1/4.$ Therefore, all eigenvalues of $\tG_{S^*}$ are included in the interval  $[3/4 \ ;\ 5/4]$ and we obtain the result.
\end{proof} 


\section{Validation of assumptions for random convolution model of Section~\ref{sec:conv}} 
  
\subsection{Rescaling and recentering}
\label{app:centering}

Note that Proposition \ref{MatG} given in the next section proves in particular that $\E(\tG)=I_p$. By Lemma \ref{centering} below, we obtain in particular that $\E(\tA^{\T}(\tY-\tA x^*))=0$ as expected.
\begin{lemma}\label{centering}
Conditionally on the $U_i$'s, $\tY$ is an unbiased estimate of $\tA x^*$:
$$\expect[\tY | U_1,\ldots,U_{\nU}] = \tA x^*.$$
\end{lemma}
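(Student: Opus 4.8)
The plan is to verify the identity coordinate by coordinate: I will show that $\expect[\tY_k \mid U_1,\ldots,U_\nU] = (\tA x^*)_k$ for every $k \in \{0,\ldots,p-1\}$, working directly from the definitions of $\tY_k$ and $\tA$ together with the conditional Poisson law of $Y$.

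First I would compute the two conditional expectations that feed into $\tY_k$. Since, conditionally on the $U_i$'s, $Y_k \sim \mathcal{P}\big(\sum_{i=1}^\nU x^*_{k-U_i}\big)$, linearity of the Poisson mean gives
$$\expect[Y_k \mid U_1,\ldots,U_\nU] = \sum_{i=1}^\nU x^*_{k-U_i} = (Ax^*)_k,$$
where the last equality uses $A_{k,\ell} = \Nb(k-\ell)$ and $\Nb(j) = \#\{i : U_i = j\}$. For the recentring term, $\ybar = \frac{1}{\nU}\sum_{k=0}^{p-1} Y_k$, so
$$\expect[\ybar \mid U_1,\ldots,U_\nU] = \frac{1}{\nU}\sum_{k=0}^{p-1}\sum_{i=1}^\nU x^*_{k-U_i} = \frac{1}{\nU}\sum_{i=1}^\nU \Big(\sum_{k=0}^{p-1} x^*_{k-U_i}\Big) = \|x^*\|_1,$$
the crucial point being that for each fixed $i$ the cyclic sum $\sum_{k} x^*_{k-U_i}$ is merely a relabelling of the coordinates of $x^*$ and therefore equals $\|x^*\|_1$ independently of $U_i$.

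Combining these two displays by linearity yields
$$\expect[\tY_k \mid U_1,\ldots,U_\nU] = \frac{1}{\sqrt{\nU}}(Ax^*)_k - \frac{\sqrt{\nU}-1}{p}\|x^*\|_1.$$
To close the argument I would compute $(\tA x^*)_k$ straight from $\tA = \frac{1}{\sqrt{\nU}}A - \frac{\sqrt{\nU}-1}{p}\ones\ones^\T$: since $\ones^\T x^* = \|x^*\|_1$, one has $(\ones\ones^\T x^*)_k = \|x^*\|_1$, hence
$$(\tA x^*)_k = \frac{1}{\sqrt{\nU}}(Ax^*)_k - \frac{\sqrt{\nU}-1}{p}\|x^*\|_1,$$
which matches the previous display exactly, completing the coordinatewise verification.

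There is no genuine obstacle here; the computation is elementary. The only place where the structure of the model is truly used — and the reason the residual term $r_k$ of Section~\ref{sec:weights} vanishes \emph{identically} rather than merely in expectation — is the invariance of the total mass $\sum_k x^*_{k-U_i}$ under the cyclic shift by $U_i$. This invariance is precisely what makes the recentring constant $\frac{\sqrt{\nU}-1}{p}\|x^*\|_1$ independent of the $U_i$'s and lets it cancel exactly against the $\ones\ones^\T$ term in $\tA$; it is where the circle, as opposed to a truncated line, enters the argument.
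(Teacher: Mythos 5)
Your proof is correct and follows essentially the same route as the paper's: compute $\expect[Y_k\mid U]=(Ax^*)_k$ and $\expect[\ybar\mid U]=\|x^*\|_1$ (the latter via the fact that each row/column of $A$ sums to $\nU$, i.e.\ the cyclic relabelling you point out), then assemble by linearity and match against $(\tA x^*)_k$. If anything, your version is slightly more careful than the paper's, which writes the $\ybar$ computation as an unconditional expectation of $\Nb(\ell-k)$ even though the deterministic identity $\sum_\ell \Nb(\ell-k)=\nU$ is what is actually needed; the conclusion is the same.
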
  
\label{pf:centering}
\begin{proof}{Lemma~\ref{centering}}
We have first
\begin{align*}
\expect[\ybar] =& \frac{1}{\nU}\sum_{\ell=1}^p \expect[(Ax^*)_\ell]
= \frac{1}{\nU}\sum_{\ell=1}^p \sum_{k=1}^p \expect[a_{\ell,k} x^*_k]\\
=&\frac{1}{\nU}\sum_{\ell=1}^p \sum_{k=1}^p  x^*_k \expect[\Nb(\ell-k) ]
=\frac{1}{\nU}\sum_{k=1}^p x^*_k m
= \|x^*\|_1.
\end{align*}
The result can be now deduced:
\begin{align*}
\expect[\tY | U_1,\ldots,U_{\nU}]=&\frac{1}{\sqrt{\nU}} \left[ Ax^* -
                                     \frac{\nU-\sqrt{\nU}}{p}
                                     \sum_{\ell=1}^p x^*_\ell  \ones
                                     \right] \\
=& \frac{1}{\sqrt{\nU}} \left[ A-\frac{\nU-\sqrt{\nU}}{p}  \ones\ones^\T \right]x^*\\
=& \tA x^*.
\end{align*}
\end{proof} 

\subsection{Assumption~\ref{as:RE} holds (proof of Proposition
  \ref{GxiConv})}\label{app:MatG}
\begin{proof}{Proposition~\ref{GxiConv}}
For this purpose, let us introduce the following degenerate
U-statistics of order two, defined for all $k\in \{0,\ldots,p-1\}$ by
\begin{equation}
\label{Ustat}
\Nu(k)=\sum_{u=1}^{p}\sum_{i=1}^\nU\sum_{j\not=i, j=1}^\nU \left(\1_{U_i=u}-\frac{1}{p}\right)\left(\1_{U_j=u+k[p]}-\frac{1}{p}\right).
\end{equation}
\begin{proposition}\label{MatG}
Let $p, m > 1$ be fixed integers.
For any $k,\ell \in \{0,\ldots,p-1\}$, we have:
$$(\tG-\I_p)_{k,\ell}=\frac{1}{m}\Nu(k-\ell).$$
Furthermore, there exists absolute positive constants $\kappa$ such
that for all real number $\theta>1$ such that there exists an event
$\Omega_\Nu(\theta)$ of probability larger than $1-5.54~pe^{-\theta}$
and, on this event, for all $k \in \{0,\ldots,p-1\}$,
\begin{equation}\label{def-xi0}
|\Nu(k)|\leq  m \xi(\theta)
\end{equation}
with $$\xi(\theta)=\kappa \left(\frac{\theta}{\sqrt{p}}+\frac{\theta^2}{m}\right).$$
\end{proposition}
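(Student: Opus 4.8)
The plan is to treat the two assertions separately: first the exact algebraic identity $(\tG-\I_p)_{k,\ell}=\frac1m\Nu(k-\ell)$, then the concentration bound via a degenerate U-statistic inequality.

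For the identity, I would start from the rescaling $\tA=\frac1{\sqrt m}A-\frac{\sqrt m-1}{p}\ones\ones^\T$ and expand $\tG=\tA^\T\tA$. Since every row and every column of $A$ sums to $m$ (because $A_{\ell,k}=\Nb(\ell-k)$ and $\sum_u\Nb(u)=m$), we have $A^\T\ones=m\ones$, $\ones^\T A=m\ones^\T$ and $\ones^\T\ones=p$; substituting these collapses the cross terms and, after simplifying the scalar coefficient, yields $\tG=\frac1m A^\T A-\frac{m-1}{p}\ones\ones^\T$. Circularity of $A$ gives $(A^\T A)_{k,\ell}=\sum_r\Nb(r-k)\Nb(r-\ell)=\sum_u\Nb(u)\Nb(u+k-\ell)$ after the shift $u=r-k$. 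It then remains to expand the definition of $\Nu(k-\ell)$, splitting the sum over $(i,j)$ into its diagonal ($i=j$) and off-diagonal parts, using $\sum_i\1_{U_i=u}=\Nb(u)$, and repeatedly invoking $\sum_u\Nb(u)=m$ to evaluate the resulting sums. Carrying this out — treating $k\neq\ell$ and $k=\ell$, where the extra $\1_{k=\ell}$ on the identity matches exactly the extra diagonal contribution of $\Nu(0)$ — reproduces $\frac1m\Nu(k-\ell)$. This step is routine but bookkeeping-heavy.

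For the concentration bound, the key observation is that $\Nu(k)=\sum_{i\neq j}g_k(U_i,U_j)$ with kernel $g_k(a,b)=\sum_u(\1_{a=u}-\frac1p)(\1_{b=u+k}-\frac1p)=\1_{b=a+k[p]}-\frac1p$. Since $\expect[g_k(a,U_2)]=\frac1p-\frac1p=0$ for every fixed $a$, the kernel is canonical, so $\Nu(k)$ is a degenerate U-statistic of order two and I can apply the Houdr\'e--Reynaud-Bouret-type inequality of \cite{ustats}, exactly as in the Bernoulli case. The bound is governed by four quantities $A=\|g_k\|_\infty$, $C^2=\sum_{i\neq j}\expect[g_k^2]$, $B^2\sim m\sup_b\expect_a[g_k(a,b)^2]$, and the operator-norm term $D$. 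Here $\|g_k\|_\infty\le 1$; $\expect[g_k(U_1,U_2)^2]=\frac1p(1-\frac1p)$ so that $C^2\le\frac{m^2}{p}$ and $B^2\lesssim\frac mp$; and viewing $g_k$ as the mean-centered shift operator on $L^2$ of the uniform law, a test-function/Cauchy--Schwarz computation under the constraints $\expect\sum a_i^2\le1$, $\expect\sum b_j^2\le1$ gives $D\lesssim\frac mp$.

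Plugging into the inequality yields, up to an absolute constant, $|\Nu(k)|\lesssim C\sqrt\theta+D\theta+B\theta^{3/2}+A\theta^2$ on an event of probability at least $1-5.54\,e^{-\theta}$, the two-sided HRB constant. Dividing by $m$, the four terms become $\frac{\sqrt\theta}{\sqrt p}$, $\frac\theta p$, $\frac{\theta^{3/2}}{\sqrt{mp}}$ and $\frac{\theta^2}{m}$; for $\theta\ge1$ the first two are $\le\frac\theta{\sqrt p}$, the last is exactly $\frac{\theta^2}{m}$, and the cross term is handled by AM--GM via $\frac{\theta^{3/2}}{\sqrt{mp}}\le\frac12\big(\frac\theta{\sqrt p}+\frac{\theta^2}{m}\big)$. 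Hence $|\Nu(k)|\le m\xi(\theta)$ with $\xi(\theta)=\kappa\big(\frac\theta{\sqrt p}+\frac{\theta^2}{m}\big)$ for a suitable absolute $\kappa$. A union bound over the $p$ values of $k\in\{0,\ldots,p-1\}$ turns the per-$k$ probability into $1-5.54\,pe^{-\theta}$, completing the proof. I expect the main obstacle to be the correct evaluation of the operator-norm parameter $D$ for this non-symmetric kernel and verifying that none of the four contributions exceeds the target rate $\xi$; the algebraic identity and the probabilistic bookkeeping are otherwise routine.
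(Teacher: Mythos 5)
Your proposal is correct and follows essentially the same route as the paper: the identity is obtained by expanding $\tG=\frac1m A^\T A-\frac{m-1}{p}\ones\ones^\T$ via the row/column sums of $A$ and matching against \eqref{Unot0}--\eqref{U0}, and the concentration comes from the degenerate U-statistic inequality of \cite{ustats} with the same constants $A_\Nu\lesssim 1$, $C_\Nu^2\lesssim m^2/p$, $B_\Nu^2\lesssim m/p$, followed by the same union bound giving $1-5.54\,pe^{-\theta}$. Your operator-norm evaluation $D\lesssim m/p$ is in fact sharper than the paper's cruder Cauchy--Schwarz bound $D_\Nu=2m/\sqrt{p}$, but both yield the same $\xi(\theta)$ since the $D\theta/m$ term is dominated by $\theta/\sqrt{p}$ either way.
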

Note that this proves that the first part of Proposition~\ref{GxiConv}
is satisfied on the event $\Omega_\Nu(\theta)$ with $\theta = 2\log p$.
On this event,
$$\|\tG - I_p\|_{\infty} \le \xi$$
where
$$\tG := \tA^\T \tA$$
and
$$\xi = \kappa \left( \frac{\log p}{\sqrt{p}} + \frac{\log^2 p}{m} \right).$$
Thus for any $x \in \reals^p$ we have
\begin{align*}
\|x\|_2^2 =&x^\T(\tG- \tG + I_p) x\\
=& \|\tA x\|_2^2 + x^\T(I_p - \tG) x\\
\le& \|\tA x\|_2^2 + \xi \|x\|_1^2\\
\|x\|_2 \le& \| \tA x\|_2 + \sqrt{\xi} \|x\|_1
\end{align*}
and Assumption~$\ref{as:RE}({\kb,\ka})$ holds with $\ka = 1$ and $\kb
= \sqrt{\xi}$.
\end{proof}

\begin{proof}{Proposition~\ref{MatG}}
Let $\beta_0=\frac{1}{\sqrt{m}}$ and $\beta_1= \frac{\sqrt{m}-1}{p}$.
For all $k\not=\ell \in \{0,\ldots,p-1\}$,
\begin{subequations}
\begin{align}\label{AtopA}
(A^\T A)_{k,k}=&\sum_{u=1}^{p}\Nb(u)^2, \\ 
 (A^\T A)_{k,\ell}=&\sum_{u=1}^{p}\Nb(u)\Nb(u+k-\ell). 
\end{align}
\end{subequations}
First note that
\begin{align*}
\Nu(d)=&\sum_{u}\sum_{i\not = j} \1_{U_i=u}\1_{U_j=u+d}-\frac{\nU-1}{p}\sum_{j=1}^\nU \sum_u \1_{U_j=u+d}-  \frac{\nU-1}{p}\sum_{i=1}^\nU \sum_u \1_{U_i=u}+ \frac{\nU(\nU-1)p}{p^2}\\
=& \sum_{u}\sum_{i\not = j} \1_{U_i=u}\1_{U_j=u+d}-\frac{\nU(\nU-1)}{p}.
\end{align*}
If $d\not=0$,
$$\sum_{u}\sum_{i\not = j} \1_{U_i=u}\1_{U_j=u+d} = \sum_u \sum_{i, j} \1_{U_i=u}\1_{U_j=u+d}=\sum_u \Nb(u)\Nb(u+d),$$
and
\begin{equation}\label{Unot0}
\Nu(d)= \sum_u \Nb(u)\Nb(u+d) -\frac{\nU(\nU-1)}{p}.
\end{equation}
If $d=0$, if $U_i=u$ then $\sum_{j\not=i} \1_{U_j=u}=\Nb(u)-1$ and
$$\sum_{i\not = j} \1_{U_i=u}\1_{U_j=u+d}=\Nb(u)(\Nb(u)-1),$$
which leads to
\begin{equation}\label{U0}
\Nu(0)= \sum_u \Nb(u)(\Nb(u)-1) -\frac{\nU(\nU-1)}{p}= \sum_u \Nb(u)^2-\nU
-\frac{\nU(\nU-1)}{p}.
\end{equation}
Thus
$$(A^\T A)_{k,\ell} = \begin{cases}
\Nu(0)+\nU+\frac{\nU(\nU-1)}{p}, & \mbox{ if } k=\ell,\\
\Nu(k-\ell)+\frac{\nU(\nU-1)}{p}, & \mbox{ if } k\not = \ell.
\end{cases}
$$
Next 
note that
\begin{align}
\tG =&\tA^\T \tA= (\beta_0 A - \beta_1 \ones \ones^\T)^\T (\beta_0
A - \beta_1 \ones \ones^\T) \\
=& \beta_0^2 A^\T A -  \beta_0 \beta_1 (\ones \ones^\T A + A^\T
   \ones \ones^\T)+ \beta_1^2
   p \ones \ones^\T \\
\tG_{k,\ell} =& \beta_0^2 (A^\T A)_{k,\ell} - 2 \beta_0 \beta_1 m +
                \beta_1^2 p
\end{align}
For $k \neq \ell$, we have
\begin{align*}
\tG_{k,\ell} =& \beta_0^2(\Nu(k-\ell) + \frac{\nU(\nU-1)}{p}) -
                2\beta_0\beta_1 \nU + \beta_1^2p\\
=& \frac{1}{m}(\Nu(k-\ell) + \frac{\nU(\nU-1)}{p}) -
   2\sqrt{\nU}\frac{\sqrt{\nU}-1}{p} +\frac{(\sqrt{\nU}-1)^2}{p}\\
=& \frac{1}{\nU}\Nu(k-\ell).
\end{align*}
Similarly, for $k = \ell$, we have
\begin{align*}
\tG_{k,k} =& \beta_0^2(\Nu(0) + \nU+ \frac{\nU(\nU-1)}{p}) -
                2\beta_0\beta_1 \nU + \beta_1^2p\\
=& \frac{1}{\nU}\Nu(k-\ell) + 1.
\end{align*}

\medskip

For the second result, one can rewrite $\Nu(d)$ as $\Nu(d)=\sum_{i<j} g(U_i,U_j)$, with
$$g(U_i,U_j)=\sum_{u=1}^{p}\left\{\left(\1_{U_i=u}-\frac1p\right)\left(\1_{U_j=u+d}-\frac1p\right)
+\left(\1_{U_i=u+d}-\frac1p\right)\left(\1_{U_j=u}-\frac1p\right)\right\}.$$
Therefore $\Nu(d)$ is a completely degenerate $U$-statistic of order 2, and one can apply  concentration inequalities of \cite{ustats}. One can identify the corresponding constants $A_\Nu,B_\Nu,C_\Nu,D_\Nu$ as follows.
The constant $A_\Nu$ should be an upper bound of $\norm{g}_\infty$ but for $a,b\in \{0,\ldots,p-1\}$, the largest value for $|g(a,b)|$ is obtained when $b=a+d$ with $d$ such that $a=b+d[p]$ is also true. In this case, we have
$$|g(a,b)|\leq 2 \left(2\left(1-\frac1p\right)^2+ \frac{p-2}{p^2}\right) \leq 6,$$
and one can take $A_\Nu=6$.
Moreover, for all $a\in \{0,\ldots,p-1\}$,
\begin{align*}
\E(g^2(U_i,a)) \leq& 2
                     \E\left[\left(\sum_u\left(\1_{U_i=u}-\frac1p\right)\left(\1_{a=u+d}-\frac1p\right)
                     \right)^2\right] \\
&+ 2 \E\left[\left(\sum_u\left(\1_{a=u}-\frac1p\right)\left(\1_{U_i=u+d}-\frac1p\right) \right)^2\right].
\end{align*}
But
\begin{align*}
&\E\left[\left(\sum_u\left(\1_{U_i=u}-\frac1p\right)\left(\1_{a=u+d}-\frac1p\right)
  \right)^2\right]  \\
&\qquad = \E\left[\left(\left(\1_{U_i=a-d[p]}-\frac1p\right)\left(1-\frac1p\right) - \frac1p \sum_{u\not = a-d [p]}  \left(\1_{U_i=u}-\frac1p\right) \right)^2\right].
\end{align*}
Moreover the probability that $U_i=a-d[p]$ is $1/p$. Therefore, by
straightforward computations, 
 \begin{align*}
&\E\left[\left(\sum_u\left(\1_{U_i=u}-\frac1p\right)\left(\1_{a=u+d}-\frac1p\right)
  \right)^2\right]\\
&\qquad=\frac1p\left(\left(1-\frac1p\right)^2 + \frac{p-1}{p^2}\right)^2+ \left(1-\frac1p\right)\left(-\frac2p\left(1-\frac1p\right)+\frac{p-2}{p^2}\right)^2\\
&\qquad= \frac1p \left(1-\frac1p\right)^2+\frac{1}{p^2} \left(1-\frac1p\right)= \frac1p \left(1-\frac1p\right) \leq \frac1p.
\end{align*}
Therefore, 
$$\E(g^2(U_i,a)) \leq \frac{4}{p}.$$
Hence, one can choose
$$C^2_\Nu = \frac{2 \nU (\nU-1)}{p} \quad\mbox{and} \quad B^2_\Nu= \frac{4\nU}{p}.$$
Finally $D_\Nu$ is an upper bound over all functions $a_i, b_j$ such that $$\sum_{i=1}^{\nU-1} \E(a_i(U_i)^2) \leq1 \quad \mbox{and} \quad \sum_{j=2}^\nU \E(b_j(U_j)^2)\leq1$$ of
\begin{align*}
\E\left[\sum_{i<j} a_i(U_i)g(U_i,U_j)b_j(U_j)\right]=&\E\left[\sum_{i=1}^{\nU-1} a_i(U_i)\sum_{j=i+1}^\nU \E(g(U_i,U_j)b_j(U_j)|U_j)\right]\\
\leq &\E\left[\sum_{i=1}^{\nU-1} |a_i(U_i)|\sum_{j=i+1}^\nU \sqrt{\E(b_j(U_j)^2)}\sqrt{\E(g(U_i,U_j)^2|U_j)}\right]\\
\leq & \frac{2}{\sqrt{p}} \E\left[\sum_{i=1}^{\nU-1} |a_i(U_i)|\sum_{j=i+1}^\nU \sqrt{\E(b_j(U_j)^2)}\right]\\
\leq &  \frac{2\sqrt{\nU}}{\sqrt{p}}   \E\left[\sum_{i=1}^{\nU-1} |a_i(U_i)|\right]\\
 \leq &  \frac{2\nU}{\sqrt{p}}
\end{align*}
and $D_\Nu= \frac{2\nU}{\sqrt{p}}$ works. Therefore, by Theorem 3.4 of \cite{ustats}, for all $\theta>0$,
$$\P(\Nu(d)\geq c (C_\Nu \sqrt{\theta}+D_\Nu \theta + B_\Nu \theta^{3/2}+ A_\Nu \theta^2)) \leq 2.77 e^{-\theta},$$
for $c$ an absolute positive constant given in \cite{ustats,MR1857312}.  A union
bound gives the second result.

\end{proof}

\subsection{Proofs for data-dependent weights}\label{proofsddw}
Note that
\begin{align*}
\tA^\T(\tY-\tA x^*)=& \tA^\T\left[\frac{I_p}{\sqrt{m}}-\frac{\sqrt{m}-1}{pm} \ones_p\ones_p^\T\right] (Y-A x^*)\\
=&\left[\frac{A^\T}{m}-\frac{m-1}{pm} \ones_p\ones_p^\T\right] (Y-A x^*)
\end{align*}
Therefore when applying the methodology of Section \ref{sec:weights}, we identify the $\ell$th component of  $R_k$ as
$$(R_k)_\ell= \frac{\Nb(\ell-k)}{m}-\frac{m-1}{pm}.$$
Thanks to this identification one can prove the following results.

\begin{proposition}\label{ConstConv}
The constant weight given by \eqref{def:cst-W} satisfy Assumption~\asref{as:dk}{\hd} with probability larger than $1-C/p$ for some absolute positive constant $C$.
\end{proposition}
\begin{proposition}\label{upboundd}
Under the notations of Proposition \ref{ConstConv}, there exists positive absolute constants $c$ and $C$ and an event of probability larger than $1-C/p$  such that on this event
$$\hd^2 \leq  c \left( \frac{\log(p)^2}{p} + \frac{\log(p)^3}{m}\right) \left(\norm{x^*}_1 + \frac{\log(p)}{m}\right).$$
\end{proposition}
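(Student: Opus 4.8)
The plan is to bound $\hd$ from above by controlling separately the three ingredients appearing in its definition \eqref{def:cst-W}: the quantity $W$, the empirical average $\ybar$, and the deviation $B$. Writing $\hd = \sqrt{4W\log p}\,\bigl(\sqrt{\ybar + \tfrac{5\log p}{3m}} + \sqrt{\tfrac{\log p}{m}}\bigr) + \tfrac{2B\log p}{3}$ and using $(a+b)^2 \le 2a^2 + 2b^2$, it suffices to establish, on a single event of probability at least $1 - C/p$, the three bounds $W \lesssim \frac{\log p}{p} + \frac{\log^2 p}{m}$, $\ybar \lesssim \norm{x^*}_1 + \frac{\log p}{m}$, and $B^2 \lesssim \frac{\log p}{mp} + \frac{\log^2 p}{m^2}$. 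The final event is the intersection of the high-probability events produced by the concentration arguments below, so the failure probability stays $O(1/p)$ provided the deviation levels $\theta$ are chosen as $\theta \simeq \log p$ throughout.

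First I would control $B$ and $W$ together. For each fixed $u$, the count $\Nb(u) = \#\{i : U_i = u\}$ is a sum of $m$ i.i.d.\ indicators, hence marginally Binomial$(m,1/p)$, so Bernstein's inequality gives $|\Nb(u) - m/p| \lesssim \sqrt{(m/p)\theta} + \theta$ with probability $1 - 2e^{-\theta}$; a union bound over the $p$ positions with $\theta \simeq \log p$ yields $\max_u |\Nb(u) - \tfrac{m-1}{p}| \lesssim \sqrt{\tfrac{m\log p}{p}} + \log p$, the shift from $m/p$ to $\tfrac{m-1}{p}$ being negligible. This gives $(mB)^2 \lesssim \tfrac{m\log p}{p} + \log^2 p$, hence the claimed bound on $B^2$. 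For $W$ the key observation is that the crude pointwise estimate $\bigl(\Nb(u) - \tfrac{m-1}{p}\bigr)^2 \le (mB)^2$, which holds by the very definition of $B$, combined with the cyclic identity $\sum_u \Nb(u+\ell) = m$, yields $w(\ell) \le \tfrac{(mB)^2}{m^2}\sum_u \Nb(u+\ell) = \tfrac{(mB)^2}{m}$ for every $\ell$, so $W \le \tfrac{(mB)^2}{m} \lesssim \tfrac{\log p}{p} + \tfrac{\log^2 p}{m}$.

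Next I would control $\ybar = \tfrac1m\norm{Y}_1$ via the Poisson concentration of Lemma~\ref{concPoisGen}. Applying \eqref{unC} with $R = \ones$ gives $v = \ones^\T A x^* = m\norm{x^*}_1$, since each column of $A$ sums to $m$, and $b = 1$, so with $\theta \simeq \log p$ one obtains $\norm{Y}_1 \lesssim m\norm{x^*}_1 + \sqrt{m\norm{x^*}_1 \log p} + \log p$; dividing by $m$ and using $\sqrt{ab}\le \tfrac12(a+b)$ gives $\ybar \lesssim \norm{x^*}_1 + \tfrac{\log p}{m}$.

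Finally I would assemble the pieces on the intersection of these events. Here $\bigl(\sqrt{4W\log p}\bigr)^2 = 4W\log p \lesssim \tfrac{\log^2 p}{p} + \tfrac{\log^3 p}{m}$ and $\bigl(\sqrt{\ybar + \tfrac{5\log p}{3m}} + \sqrt{\tfrac{\log p}{m}}\bigr)^2 \lesssim \ybar + \tfrac{\log p}{m} \lesssim \norm{x^*}_1 + \tfrac{\log p}{m}$, and the product of these two quantities is precisely the target bound $\bigl(\tfrac{\log^2 p}{p} + \tfrac{\log^3 p}{m}\bigr)\bigl(\norm{x^*}_1 + \tfrac{\log p}{m}\bigr)$; meanwhile $\bigl(\tfrac{2B\log p}{3}\bigr)^2 \lesssim B^2\log^2 p \lesssim \tfrac{\log^3 p}{mp} + \tfrac{\log^4 p}{m^2}$, which is dominated by the cross terms of that product and hence absorbed. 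The main obstacle is recognizing that the deliberately crude bound $W \le (mB)^2/m$ is nonetheless sharp enough: this is exactly why the target appears in factorized form, since after multiplication by $\log p$ the bound on $W$ reproduces the first factor $\tfrac{\log^2 p}{p} + \tfrac{\log^3 p}{m}$ with no further loss.
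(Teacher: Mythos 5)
Your proposal is correct and follows essentially the same route as the paper: bound $\ybar$ by applying \eqref{unC} with $R=\ones$, bound $B$ by Bernstein's inequality on the counts $\Nb(u)$ (the paper's Lemma~\ref{Nb}), bound $W$ by the crude estimate $w(\ell)\le (mB)^2 m^{-2}\sum_u\Nb(u+\ell)=(mB)^2/m$ (the paper's Lemma~\ref{bornesupW}), and assemble on the intersection of the events with $\theta\simeq\log p$. The observation that the $B^2\log^2 p$ term is absorbed by the cross terms of the factorized bound is also exactly how the paper concludes.
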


\begin{proposition}\label{NonConstConv}
The non-constant weights given by \eqref{def:noncst-W}
satisfy Assumption~\asref{as:dk}{\hd} with probability larger than $1-C/p$ for some absolute positive constant $C$.
\end{proposition}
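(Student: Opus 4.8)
The plan is to invoke the \emph{observable} concentration bound \eqref{quatreC} of Lemma~\ref{concPoisGen} directly, exploiting the fact that the recentering of $\tY$ was engineered (via Lemma~\ref{centering}) precisely so that $\expect[\tY \mid U] = \tA x^*$, which annihilates the residual term $r_k$ of Section~\ref{sec:weights}. Consequently, for each fixed $k$ the quantity we must dominate is purely of the linear-in-$Y$ form $(\tA^\T(\tY - \tA x^*))_k = R_k^\T(Y - A x^*)$, where $R_k$ is the $k$-th row of $\frac{A^\T}{m} - \frac{m-1}{pm}\ones_p\ones_p^\T$, i.e. $(R_k)_\ell = \frac{1}{m}(\Nb(\ell-k) - \frac{m-1}{p})$, exactly as recorded in the preamble of Appendix~\ref{proofsddw}. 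Unlike the Bernoulli case, there is no degenerate U-statistic residual $T_2$ to control, so the argument reduces to a single application of the master concentration lemma.

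First I would identify the constants appearing in \eqref{quatreC} for the choice $R = R_k$. Since $\ell \mapsto \ell - k$ is a bijection of $\{0,\ldots,p-1\}$ for every fixed $k$, one has $b = \norm{R_k}_\infty = \max_\ell \frac{1}{m}|\Nb(\ell-k) - \frac{m-1}{p}| = \max_u \frac{1}{m}|\Nb(u) - \frac{m-1}{p}| = B$, so the deterministic bound $b = B$ holds uniformly over $k$. Likewise $R_2^\T Y = \sum_\ell (R_k)_\ell^2 Y_\ell = \sum_\ell \frac{1}{m^2}(\Nb(\ell-k) - \frac{m-1}{p})^2 Y_\ell = \hat v_k$. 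These two identifications are the only nontrivial bookkeeping, and they are what make the data-dependent threshold computable from the observations alone.

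Next I would set $\theta = 2\log p$ and substitute into \eqref{quatreC}. A direct computation shows that its threshold $(\sqrt{b^2\theta/2} + \sqrt{5b^2\theta/6 + R_2^\T Y})\sqrt{2\theta} + b\theta/3$ collapses exactly to the expression $\hd_k$ of \eqref{def:noncst-W}: indeed $\sqrt{2\theta} = \sqrt{4\log p}$, $\sqrt{b^2\theta/2} = \sqrt{B^2\log p}$, $5b^2\theta/6 = 5B^2\log p/3$, and $b\theta/3 = 2B\log p/3$. Hence \eqref{quatreC} yields, conditionally on the $U_i$'s, $\P(|(\tA^\T(\tY-\tA x^*))_k| \ge \hd_k \mid U) \le 3e^{-2\log p} = 3/p^2$ for each fixed $k$. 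A union bound over the $p$ indices $k \in \{0,\ldots,p-1\}$ gives $\P(\exists k : |(\tA^\T(\tY-\tA x^*))_k| > \hd_k \mid U) \le 3/p$; since this conditional bound is uniform in $U$, it also holds unconditionally, establishing Assumption~\asref{as:dk}{\hd} with probability at least $1 - C/p$ for $C = 3$.

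There is no genuine obstacle here; the one point worth stating carefully is the appeal to \eqref{quatreC} rather than \eqref{deuxC}. Because both $B$ and $\hat v_k$ are data-dependent, we cannot use the true variance proxy $v = R_2^\T A x^*$, which still involves the unknown $x^*$; instead \eqref{quatreC} is precisely the inequality that replaces $v$ by the observable surrogate built from $R_2^\T Y = \hat v_k$ (cf. the role of \eqref{troisC}), at the cost of only an additional factor in the failure probability. This is exactly the device that renders the weights both valid and computable, so the whole proof is essentially a one-line specialization of Lemma~\ref{concPoisGen}.
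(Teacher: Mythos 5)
Your proof is correct and is exactly the paper's argument: the paper's entire proof of Proposition~\ref{NonConstConv} is the single sentence that it follows from applying \eqref{quatreC} of Lemma~\ref{concPoisGen} to each vector $R_k$ with $\theta=2\log p$, and your write-up simply spells out the identifications $b=B$, $R_2^\T Y=\hat v_k$ and the union bound that the authors leave implicit.
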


\begin{proposition}\label{updownbounddk}
Under the notations of Proposition \ref{ConstConv}, there exists some absolute constants $\kappa_1,\kappa_2,c_1,c_2 $ and $C$ positive such that if $p\geq 5$ and if 
\begin{equation}\label{relationships}
\kappa_1 \log(p) \sqrt{p} \leq m \leq \kappa_2 p \log(p)^{-1},
\end{equation}
there exists  an event of probability larger than $1-C/p$  such that on this event
$$c_1\left( \frac{ x^*_k  \log p}{\nU} +  \frac{\log p}{p} \sum_{u\not = k} x^*_u
  + \frac{\log^2 p}{\nU^2} \right)\leq
\hd_k^2 \leq c_2\left( \frac{ x^*_k \log p }{\nU} + \frac{\log^2 p}{p} \sum_{u\not = k} x^*_u+ \frac{\log^4 p}{\nU^2}\right).$$
\end{proposition}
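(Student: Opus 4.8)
The plan is to prove that, on a high-probability event and up to absolute multiplicative constants, $\hd_k^2$ behaves like $\log(p)\,\hat{v}_k + B^2\log^2 p$; then to replace the observable $\hat{v}_k = V_k^\T Y$ by its conditional mean $v_k := V_k^\T A x^*$ using the Poisson concentration of Lemma~\ref{concPoisGen}; and finally to pin down the two remaining random quantities $B$ and $v_k$ by occupancy-count concentration in the range \eqref{relationships}.

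\emph{Deterministic reduction and passage to $v_k$.} Starting from \eqref{def:noncst-W}, I expand the square $\big(\sqrt{\hat{v}_k + \tfrac{5B^2\log p}{3}} + \sqrt{B^2\log p}\big)^2$ and control the cross term by $2ab \le a^2 + b^2$; since $\hat{v}_k \ge 0$ and $B \ge 0$, this produces absolute constants $0 < c_1 \le c_2$ with $c_1\big(\log(p)\,\hat{v}_k + B^2\log^2 p\big) \le \hd_k^2 \le c_2\big(\log(p)\,\hat{v}_k + B^2\log^2 p\big)$. Next, recall from Appendix~\ref{proofsddw} that $\hat{v}_k = V_k^\T Y$ with $V_{k,\ell} = (R_k)_\ell^2$, $\|V_k\|_\infty = B^2$, and $v_k := V_k^\T A x^* = \expect[\hat{v}_k \mid U_1,\dots,U_m]$. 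Applying \eqref{unC} of Lemma~\ref{concPoisGen} to $V_k$ and to $-V_k$ with $\theta = \kappa\log p$, the variance proxy is $\sum_\ell V_{k,\ell}^2 (Ax^*)_\ell \le B^2 v_k$, so that $\sqrt{2\sum_\ell V_{k,\ell}^2(Ax^*)_\ell\,\theta} \le B\sqrt{2 v_k\theta} \le \tfrac12 v_k + B^2\theta$; hence, on an event of probability at least $1 - C/p$,
$$ \tfrac12 v_k - c\,B^2\log p \ \le\ \hat{v}_k \ \le\ \tfrac32 v_k + c\,B^2\log p. $$
Inserting this into the reduction gives the upper bound $\hd_k^2 \lesssim \log(p)\,v_k + B^2\log^2 p$ at once; for the lower bound I combine $\hd_k^2 \gtrsim \log(p)\,\hat{v}_k \ge \tfrac12\log(p)\,v_k - c\,B^2\log^2 p$ with the ever-present estimate $\hd_k^2 \gtrsim B^2\log^2 p$ through a convex combination, absorbing the negative $B^2\log^2 p$ contribution, to obtain $\hd_k^2 \gtrsim \log(p)\,v_k + B^2\log^2 p$ as well.

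\emph{Controlling $B$ and $v_k$.} For $B$, a Bernstein bound on $\Nb(u) = \sum_{i}\1_{U_i = u}$ (mean $m/p$, variance at most $m/p$, summands in $\{0,1\}$) together with a union bound over the $p$ sites and $\theta = \kappa\log p$ yields $|\Nb(u) - \tfrac{m-1}{p}| \lesssim \sqrt{\tfrac{m\log p}{p}} + \log p$ for every $u$, whence $B \lesssim \sqrt{\tfrac{\log p}{mp}} + \tfrac{\log p}{m} \lesssim \tfrac{\log p}{m}$ in the range \eqref{relationships}; conversely $B \ge \tfrac{1}{2m}$ because $\max_u\Nb(u) \ge 1$ and $\tfrac{m-1}{p} \le \tfrac12$. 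This is what delivers the $\tfrac{\log^2 p}{m^2}$ and $\tfrac{\log^4 p}{m^2}$ terms. For $v_k$, I write $v_k = \tfrac{1}{m^2}\sum_u x^*_u\,\phi(k-u)$ with $\phi(d) := \sum_j \big(\Nb(j) - \tfrac{m-1}{p}\big)^2 \Nb(j+d)$ (indices mod $p$). Expanding the square and using the identity $n^3 = n + 3n(n-1) + n(n-1)(n-2)$ shows that each $\phi(d)$ is a degree-$\le 3$ polynomial in the multinomial counts, i.e.\ a $U$-statistic of order $\le 3$ in $U_1,\dots,U_m$, with $\expect\,\phi(0) = m\,(1 + o(1))$ and $\expect\,\phi(d) \simeq m^2/p$ for $d \neq 0$. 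Concentrating these via the $U$-statistic inequalities of \cite{ustats}, exactly as for $\Nu$ in Proposition~\ref{MatG}, one checks that in the regime $\sqrt{p}\log p \lesssim m \lesssim p/\log p$ the leading terms dominate the fluctuations, so that with probability at least $1 - C/p$ one has $\phi(0) \simeq m$ and $c_1' m^2/p \le \phi(d) \le c_2'\,m^2\log p/p$ for $d \neq 0$. Therefore $v_k \simeq \tfrac{x^*_k}{m} + \tfrac{1}{p}\sum_{u \neq k} x^*_u$, with an extra $\log p$ slack on the second term on the upper side; multiplying by $\log p$ and combining with the bounds on $B$ reproduces exactly the announced two-sided estimates on $\hd_k^2$.

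\emph{Main difficulty.} The crux is the last step: the two-sided control of $\phi(0)$ and $\phi(d)$, which are cubic forms in \emph{dependent} multinomial occupancy counts. The range \eqref{relationships} is precisely what makes this work. The upper constraint $m \lesssim p/\log p$ keeps parent collisions rare, so that $\sum_j \Nb(j)^3 \approx \sum_j \Nb(j) = m$ and the off-diagonal products $\sum_j \Nb(j)^2\Nb(j+d)$ remain at their ``independent'' order $m^2/p$ rather than being inflated by coincidences; the lower constraint $m \gtrsim \sqrt{p}\,\log p$ forces the $U$-statistic fluctuations to be of strictly lower order than these means, so the leading behaviour survives with high probability. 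Everything else is a routine (if lengthy) moment computation of the kind already carried out for the Bernoulli model.
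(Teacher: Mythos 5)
Your overall skeleton matches the paper's: you expand \eqref{def:noncst-W} to get $\hd_k^2\simeq \log(p)\,\hat v_k+B^2\log^2 p$ up to absolute constants, pass from $\hat v_k$ to $v_k=\sum_u w(k-u)x^*_u$ by Poisson concentration (the paper uses \eqref{quatreC} where you use \eqref{unC} applied to $\pm V_k$; both work and cost the same union bound), and control $B$ exactly as the paper does via Lemma~\ref{Nb} together with the observation that $\max_u\Nb(u)\ge 1$ and $(m-1)/p<1/2$ force $B\gtrsim 1/m$. Your identification of where the two constraints in \eqref{relationships} enter is also correct.

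The gap is in the step you yourself flag as the crux: the two-sided control of $\phi(0)=m^2w(0)$ and $\phi(d)=m^2w(d)$. You propose to treat $\phi(d)$ as a U-statistic of order $\le 3$ in $U_1,\dots,U_m$ and to concentrate it ``exactly as for $\Nu$ in Proposition~\ref{MatG}.'' This does not go through as stated: the inequality of \cite{ustats} invoked in Proposition~\ref{MatG} is for \emph{completely degenerate U-statistics of order two}, whereas $\phi(d)$ is a non-degenerate cubic form; to use that route you would first need its Hoeffding decomposition into degenerate components of orders $1$, $2$ and $3$, an order-$3$ concentration inequality with its own set of constants $(A,B,C,D,\dots)$, and then a verification that every fluctuation term is dominated by the means $m$ (for $d=0$) and $m^2/p$ (for $d\ne 0$) in the range \eqref{relationships} -- none of which is carried out, and the lower bound on $\phi(d)$ is exactly where this is delicate. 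The paper's Lemma~\ref{bornesw} avoids all of this by reducing the cubic sums to the order-two statistic $\Nu(d)$ that is \emph{already} controlled in Proposition~\ref{MatG}, combined only with the sup-norm bound $\Nb(u)\le M(\theta)$ from Lemma~\ref{Nb}: for the upper bounds one uses $\sum_u\Nb(u)^3\le m+(2M(\theta)-1)\sum_u\Nb(u)(\Nb(u)-1)$ with $\sum_u\Nb(u)(\Nb(u)-1)=\Nu(0)+m(m-1)/p$, and $\sum_u\Nb(u)^2\Nb(u+d)\le M(\theta)\bigl(\Nu(d)+m(m-1)/p\bigr)$ for $d\ne0$ (this $M(\theta)\lesssim\theta$ factor is precisely the source of the extra $\log p$ on the upper side); for the lower bounds one only needs $\Nb(u)^2\ge\Nb(u)$ and $|\Nu(d)|\le m\xi(\theta)$ with $m/(4p)>\xi(\theta)$, which is what the constraint $m\gtrsim\theta\sqrt p$ buys. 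So the statement is true and your reductions around it are sound, but the central estimate is asserted rather than proved, and the tool you point to is not the one that proves it.
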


\subsubsection{Assumption~\ref{as:dk} holds (proof of Propositions~\ref{ConstConv} and~\ref{NonConstConv})}

Proposition \ref{NonConstConv} is just the application of \eqref{quatreC} of Lemma \ref{concPoisGen} to each of the vectors $R_k$ with $\theta=2\log(p)$.

For Proposition \ref{ConstConv} note that 
$$v_k=(R_k)_2^\T A x^* = \sum_{u=0}^{p-1} w(k-u) x^*_u.$$
Hence all the $v_k$'s satisfy that
\begin{equation}\label{vkmaj}
v_k\leq W \norm{x^*}_1
\end{equation}

But one could apply Lemma  \ref{concPoisGen} with $R=-\ones$ to obtain that
$$\P\left ( -\ybar \geq - \norm{x^*}_{1} + \sqrt{\frac{2}{\nU} \norm{x^*}_{1} \theta} + \frac{\theta}{3\nU}\right ) \leq e^{-\theta},$$
 which is equivalent to
\begin{equation}\label{normest}
 \P\left ( \norm{x^*}_{1} \geq \left [\sqrt{\frac{\theta}{2\nU}}+\sqrt{\frac{5\theta}{6\nU}+\ybar}\right ]^2\right ) \leq e^{-\theta}.
 \end{equation}

 Therefore combining \eqref{deuxC} of Lemma \ref{concPoisGen} with
 $R_k$ with \eqref{vkmaj} and \eqref{normest} leads to the desired
 result, taking $\theta=2\log(p)$.

 \subsubsection{Bounds on the $\ave{V_k,\mathbb{N}}$s}

Let $w(\ell) := \ave{V_\ell,\mathbb{N}}$.
To derive bounds on the $w(\ell)$s, we need to introduce in addition to $\Omega_\Nu(\theta)$ another event, namely $\Omega_\Nb(\theta)$.
\begin{lemma}\label{Nb}
  There exists an event $\Omega_{\Nb}(\theta)$ of probability larger
  than $1-2pe^{-\theta}$ such that on $\Omega_{\Nb}(\theta)$, for all
  $u$ in $\{0,\ldots,p-1\}$,
$$\left|\Nb(u)- \frac{\nU}{p}\right| \leq \sqrt{2\frac{\nU}{p} \theta}+ \frac{\theta}{3}.$$
\end{lemma}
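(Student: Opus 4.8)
The plan is to recognize each $\Nb(u)$ as a sum of i.i.d.\ Bernoulli variables and apply a two-sided Bernstein inequality, followed by a union bound over the $p$ positions.

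First I would note that, by the definition \eqref{Ncount}, for each fixed $u\in\{0,\ldots,p-1\}$ we have $\Nb(u)=\sum_{i=1}^{\nU}\indic_{U_i=u}$, where the indicators $\indic_{U_i=u}$ are i.i.d.\ Bernoulli random variables with success probability $1/p$ (since each $U_i$ is uniform on $\{0,\ldots,p-1\}$). Consequently $\expect[\Nb(u)]=\nU/p$, and $\Nb(u)$ is a sum of independent bounded random variables, which places it exactly in the setting of a Bernstein concentration inequality.

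Next I would apply the (two-sided) Bernstein inequality, in the same normalized form already used in the proof of Proposition~\ref{GBerq}, to the centered variables $\indic_{U_i=u}-1/p$. The relevant parameters are the uniform bound $b=\max_i\|\indic_{U_i=u}-1/p\|_\infty\le 1$ and the variance factor $v=\sum_{i=1}^{\nU}\Var(\indic_{U_i=u})=\nU\,\tfrac{1}{p}\bigl(1-\tfrac{1}{p}\bigr)\le \nU/p$. Bernstein's inequality then gives, for each fixed $u$ and any $\theta>0$,
$$\Prob\!\left(\Bigl|\Nb(u)-\tfrac{\nU}{p}\Bigr|\ge \sqrt{2v\theta}+\tfrac{b\theta}{3}\right)\le 2e^{-\theta}.$$
Since $\sqrt{2v\theta}\le\sqrt{2\tfrac{\nU}{p}\theta}$ and $b\le 1$, the event inside this probability contains the event $\{|\Nb(u)-\nU/p|\ge \sqrt{2\tfrac{\nU}{p}\theta}+\tfrac{\theta}{3}\}$, so the latter deviation also has probability at most $2e^{-\theta}$.

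Finally I would take a union bound over the $p$ positions $u\in\{0,\ldots,p-1\}$: the event $\Omega_{\Nb}(\theta)$ on which the stated inequality holds simultaneously for all $u$ has complement of probability at most $p\cdot 2e^{-\theta}=2pe^{-\theta}$, yielding $\Prob(\Omega_{\Nb}(\theta))\ge 1-2pe^{-\theta}$. There is essentially no obstacle here; the only point requiring care is matching the constants in the statement exactly, i.e.\ verifying that the crude bounds $b\le 1$ and $v\le \nU/p$ reproduce the claimed form $\sqrt{2(\nU/p)\theta}+\theta/3$ rather than a looser one.
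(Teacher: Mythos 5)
Your proof is correct and follows exactly the paper's (one-line) argument: apply Bernstein's inequality to the $\nU$ i.i.d.\ indicators $\indic_{U_i=u}$, bound the variance factor by $\nU/p$ and the sup-norm by $1$, and union-bound over the $p$ positions. The constants $\sqrt{2(\nU/p)\theta}+\theta/3$ and the probability $1-2pe^{-\theta}$ come out exactly as claimed, so nothing is missing.
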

This is just a classical consequence  of Bernstein's inequality to the $\nU$ i.i.d.\,variables $\1_{U_i=u}$.
Thanks to this definition, one can prove the following bounds.

\begin{lemma}\label{bornesupW}
There exists an absolute constant $c$ such that for all $\theta>1$, on the event $\Omega_\Nb(\theta)$, of probability larger than $1-pe^{-\theta}$, 
$$W\leq c \left(\frac{\theta}{p}+\frac{\theta^2}{m}\right).$$
\end{lemma}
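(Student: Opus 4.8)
The plan is to bound each $w(\ell)$ uniformly in $\ell$ by working on the event $\Omega_\Nb(\theta)$ of Lemma~\ref{Nb}, where every count $\Nb(u)$ is close to its mean $\nU/p$. First I would record the deviation bound: on $\Omega_\Nb(\theta)$, for all $u$,
$$\left|\Nb(u) - \frac{\nU}{p}\right| \le \beta := \sqrt{\frac{2\nU\theta}{p}} + \frac{\theta}{3}.$$
Since $\frac{\nU-1}{p} = \frac{\nU}{p} - \frac{1}{p}$, the triangle inequality gives $\left|\Nb(u) - \frac{\nU-1}{p}\right| \le \beta + \frac{1}{p}$, and because $\theta > 1$ forces $\beta \ge \theta/3 > 1/3$ while $1/p < 1$, this is at most an absolute constant times $\beta$. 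Hence $\left(\Nb(u) - \frac{\nU-1}{p}\right)^2 \lesssim \beta^2$ uniformly in $u$.

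Next I would exploit that the remaining factor $\Nb(u+\ell)$ is nonnegative and sums to $\nU$. Pulling the uniform square bound out of the sum in the definition of $w(\ell)$,
$$w(\ell) = \frac{1}{\nU^2}\sum_{u=0}^{p-1}\left(\Nb(u)-\frac{\nU-1}{p}\right)^2\Nb(u+\ell) \lesssim \frac{\beta^2}{\nU^2}\sum_{u=0}^{p-1}\Nb(u+\ell).$$
Because the shift $u \mapsto u+\ell$ is understood modulo $p$, the inner sum equals $\sum_{u=0}^{p-1}\Nb(u) = \nU$, so $w(\ell) \lesssim \beta^2/\nU$ for every $\ell$, and therefore $W = \max_\ell w(\ell) \lesssim \beta^2/\nU$.

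Finally I would expand $\beta^2 = \left(\sqrt{2\nU\theta/p} + \theta/3\right)^2 \lesssim \nU\theta/p + \theta^2$, so that $W \lesssim \frac{1}{\nU}\left(\frac{\nU\theta}{p} + \theta^2\right) = \frac{\theta}{p} + \frac{\theta^2}{m}$, which is the claimed bound (recall $\nU = m$); the probability statement is inherited directly from Lemma~\ref{Nb}. This argument is a routine calculation, so there is no serious obstacle. The only points requiring a little care are the passage from the centering $\nU/p$ to $(\nU-1)/p$, handled by the $\theta>1$ lower bound on $\beta$, and the observation that the cyclic shift preserves the total mass $\sum_u\Nb(u) = \nU$, which is exactly what lets the nonnegative factor $\Nb(u+\ell)$ sum to $\nU$ without needing any further concentration control of its own.
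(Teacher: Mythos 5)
Your proof is correct and follows essentially the same route as the paper: bound $\bigl(\Nb(u)-\tfrac{\nU-1}{p}\bigr)^2$ uniformly on $\Omega_\Nb(\theta)$ via Lemma~\ref{Nb}, pull it out of the sum, use $\sum_u \Nb(u+\ell)=\nU$, and expand the square. The only cosmetic difference is that you absorb the $1/p$ shift from the centering into $\beta$ using $\theta>1$, whereas the paper carries it along and discards the resulting $1/(p^2 m)$ term at the end; both are valid.
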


\begin{proof}{Lemma~\ref{bornesupW}}
Recall that $W=\max w(\ell)$ with for fixed $\ell$
$$w(\ell)=\sum_{u=0}^{p-1} \frac{1}{m^2}
\left(\Nb(u)-\frac{m-1}{p}\right)^2 \Nb(u+\ell) = \ave{V_\ell,\Nb}.$$
Hence on $\Omega_\Nb(\theta)$ \
$$w(\ell)\leq  \frac{1}{m^2} \left(\sqrt{2\frac{\nU}{p} \theta}+ \frac{\theta}{3}+\frac{1}{p}\right)^2 ~~ \sum_{u=0}^{p-1}\Nb(u+\ell) \leq \square\frac{\1}{m} \left(\frac{\nU \theta}{p} + \theta^2+\frac{1}{p^2}\right)^2.$$
But $1/(p^2m)\leq \min(\theta/p, \theta^2/m)$, which gives the result.
\end{proof}

This bound can be refined for a particular range of values for $m$.
\begin{lemma}\label{bornesw}
If $p\geq 2$ and $m$ satisfies 
\begin{equation}\label{condmtheta}
5\max(2\kappa,1)\theta\sqrt{p}\leq m \leq p \theta^{-1},
\end{equation}
then there exists positive constants $c_1,c_2,c^{\prime}_1$ and $c^{\prime}_2$ such that if $\theta>3$, on $\Omega_\Nb(\theta)\cap\Omega_\Nu(\theta)$,
$$ c_1/m \leq w(0) \leq c_2/m$$
and for $\ell\not = 0$,
$$c^{\prime}_1/p \leq w(\ell) \leq c^{\prime}_2 \theta /p.$$
\end{lemma}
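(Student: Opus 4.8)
The plan is to treat the four inequalities separately, after recording on $\Omega_\Nb(\theta)\cap\Omega_\Nu(\theta)$ the three ingredients that drive everything. Write $c:=\frac{m-1}{p}$ and $\mu:=\frac{m}{p}$, so that under \eqref{condmtheta} one has $\mu\le 1/\theta<1/3$, hence $0<c<1$. First, Lemma~\ref{Nb} gives $\max_u\Nb(u)\le \mu+\sqrt{2\mu\theta}+\theta/3$, which because $\mu\le1/\theta$ and $\theta>3$ is at most $2\theta$. Second, $\sum_u\Nb(u)=m$ holds deterministically, while Proposition~\ref{MatG} yields the $U$-statistic identities $\sum_u\Nb(u)(\Nb(u)-1)=\Nu(0)+\frac{m(m-1)}{p}$ and, for $\ell\neq0$, $\sum_u\Nb(u)\Nb(u+\ell)=\Nu(\ell)+\frac{m(m-1)}{p}$, with $|\Nu(k)|\le m\xi$ and $\xi=\kappa(\theta/\sqrt p+\theta^2/m)$ on $\Omega_\Nu(\theta)$. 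Third, I would extract from \eqref{condmtheta} the purely arithmetic facts that carry the argument: $\theta\mu\le1$ (from $m\le p\theta^{-1}$), and $\theta^2\le\sqrt p/5$, $\xi\le\tfrac{6}{25}\mu$, hence $m\xi\le\tfrac12\,m(m-1)/p$ (from $m\ge5\max(2\kappa,1)\theta\sqrt p$, which forces $\theta\sqrt p/m$ and $\theta^2p/m^2$ to be small multiples of $1/\kappa$).

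For the lower bounds I would argue directly. Every term with $\Nb(u)\ge1$ satisfies $(\Nb(u)-c)^2\ge(1-c)^2$ while the $\Nb(u)=0$ terms vanish, so $m^2w(0)=\sum_u(\Nb(u)-c)^2\Nb(u)\ge(1-c)^2\sum_u\Nb(u)=(1-c)^2m$, giving $w(0)\ge (1-c)^2/m\ge\tfrac49/m$. For $\ell\neq0$ I would use $\Nb(u)^2\ge\Nb(u)$ to get $\sum_u\Nb(u)^2\Nb(u+\ell)\ge\sum_u\Nb(u)\Nb(u+\ell)$; expanding $m^2w(\ell)=\sum_u\Nb(u)^2\Nb(u+\ell)-2c\sum_u\Nb(u)\Nb(u+\ell)+c^2m$ and inserting the $U$-statistic identity then yields $m^2w(\ell)\ge(1-2c)\bigl(\tfrac{m(m-1)}{p}-m\xi\bigr)$, and the arithmetic fact $m\xi\le\tfrac12\,m(m-1)/p$ makes this at least a constant multiple of $m^2/p$, i.e.\ $w(\ell)\ge c'_1/p$.

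For the upper bounds the single mechanism is to replace the shifted factor by $\max_v\Nb(v)\lesssim\theta$ and then observe that the accompanying second-order sum has size $m(m-1)/p\approx m\mu$, so that the dangerous product is $\theta\cdot m\mu=(\theta\mu)\,m\le m$ for $w(0)$ and $\theta\cdot m\mu=\theta m^2/p$ for $w(\ell)$. Writing $\Nb^{(2)}(u)=\Nb(u)(\Nb(u)-1)$ and $\Nb^{(3)}(u)=\Nb(u)(\Nb(u)-1)(\Nb(u)-2)$, for $w(0)$ I would bound $m^2w(0)\le\sum_u\Nb(u)^3$ and use $\Nb^3=\Nb^{(3)}+3\Nb^{(2)}+\Nb$, so that $\sum_u\Nb(u)^3=\sum_u\Nb^{(3)}(u)+3\bigl(\Nu(0)+\tfrac{m(m-1)}{p}\bigr)+m$; bounding $\sum_u\Nb^{(3)}(u)\le\max_v\Nb(v)\sum_u\Nb^{(2)}(u)\lesssim\theta\bigl(\tfrac{m(m-1)}{p}+m\xi\bigr)$ and invoking $\theta\mu\le1$ and $m\xi\lesssim m\mu$ shows every term is $O(m)$, whence $w(0)\le c_2/m$. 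For $\ell\neq0$ I would split $\sum_u\Nb(u)^2\Nb(u+\ell)=\sum_u\Nb^{(2)}(u)\Nb(u+\ell)+\sum_u\Nb(u)\Nb(u+\ell)$, bound the first piece by $\max_v\Nb(v)\sum_u\Nb^{(2)}(u)\lesssim\theta\,m(m-1)/p=\theta m^2/p$ (using $\xi\lesssim\mu$ to absorb the $m\xi$ part) and the second by $m^2/p$, and control the remaining two terms of the expansion of $m^2w(\ell)$ by the same estimates, giving $m^2w(\ell)\lesssim\theta m^2/p$, i.e.\ $w(\ell)\le c'_2\theta/p$.

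The main obstacle is precisely the bookkeeping around the factor $\max_v\Nb(v)$. A naive uniform bound costs a factor $\theta$ in every third-order sum, and a priori this would spoil both the $\theta$-free bound claimed for $w(0)$ and the sharp $\theta/p$ bound for $w(\ell)$. The point that must be gotten exactly right is that this $\theta$ is always multiplied by a genuinely second-order sum of size $\approx m(m-1)/p=m\mu$ (not by $m$), so that the upper constraint $m\le p\theta^{-1}$, i.e.\ $\theta\mu\le1$, reabsorbs it; symmetrically, the lower constraint $m\ge5\max(2\kappa,1)\theta\sqrt p$ is exactly what makes the $U$-statistic fluctuation $m\xi$ a small fraction of the main term $m(m-1)/p$, without which the two lower bounds would fail. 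Verifying these two regime inequalities and propagating them cleanly through the three third-order sums is where essentially all the work lies; the rest reduces to the elementary integer inequalities $(\Nb(u)-c)^2\ge(1-c)^2$ on $\{\Nb(u)\ge1\}$ and $\Nb(u)^2\ge\Nb(u)$.
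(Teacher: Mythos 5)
Your proposal is correct and follows essentially the same route as the paper: expand $m^2w(\ell)$ into moment sums of $\Nb$, convert the second-order sums via the identities $\sum_u\Nb(u)(\Nb(u)-1)=\Nu(0)+m(m-1)/p$ and $\sum_u\Nb(u)\Nb(u+\ell)=\Nu(\ell)+m(m-1)/p$, control the third-order sums by factoring out $\max_u\Nb(u)\lesssim\theta$ from Lemma~\ref{Nb}, and use the two sides of \eqref{condmtheta} exactly as you describe (the upper side to absorb the extra $\theta$ via $\theta m/p\le1$, the lower side to make $m\xi$ a small fraction of $m(m-1)/p$), with the same elementary lower bounds $(\Nb(u)-c)^2\ge(1-c)^2$ and $\Nb(u)^2\ge\Nb(u)$. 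The only difference is cosmetic bookkeeping: you use falling-factorial identities where the paper splits the sum over $\{u:\Nb(u)>1\}$ and factors $M(\theta)$ out of $\Nb(u)^2$ directly.
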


\begin{proof}{Lemma~\ref{bornesw}}
Let $M(\theta)= m/p +\sqrt{2\frac{\nU}{p} \theta}+ \frac{\theta}{3}$ be the bound given by Lemma \ref{Nb}.
For the upper bounds, first remark that
$$w(0)=\frac{1}{\nU^2}\sum_u \Nb(u)^3 -2 \frac{\nU-1}{p\nU^2} \sum_u\Nb(u)^2 + \left (\frac{\nU-1}{p\nU}\right )^2 \sum_u \Nb(u).$$
But $\sum_u \Nb(u)=\nU$ and on $\Omega_{\Nb}(\theta),$
\begin{align*}
\sum_u \Nb(u)^3 \leq & \sum_{u / \Nb(u) \leq 1} \Nb(u) + \sum_{u / \Nb(u) > 1} \Nb(u)^3\\
\leq & \sum_{u / \Nb(u) \leq 1} \Nb(u) + M(\theta)\sum_{u / \Nb(u) > 1} \Nb(u)^2 \\
\leq & \sum_{u / \Nb(u) \leq 1} \Nb(u)  + M(\theta)\sum_{u / \Nb(u) > 1} \Nb(u)(\Nb(u)-1) + M(\theta)\sum_{u / \Nb(u) > 1} \Nb(u)\\
\leq & \sum_{u }\Nb(u) + (M(\theta)-1) \sum_{u / \Nb(u) > 1} \Nb(u) + M(\theta)\sum_{u / \Nb(u) > 1} \Nb(u)(\Nb(u)-1) \\
\leq & \nU + (2M(\theta)-1) \sum_{u / \Nb(u) > 1} \Nb(u)(\Nb(u)-1).
\end{align*}
One can also write $\sum_u\Nb(u)^2= \nU +\sum_u\Nb(u)(\Nb(u)-1).$ Therefore
$$ w(0) \leq \frac{1}{\nU}\left(1-\frac{\nU-1}{p}\right)^2  + \frac{1}{\nU^2}\left(2M(\theta)-1-2\frac{\nU-1}{p}\right)\sum_u\Nb(u)(\Nb(u)-1).$$
But
$$\sum_u\Nb(u)(\Nb(u)-1)= \Nu(0)+ \frac{\nU(\nU-1)}{p}$$
(see (\ref{U0})).
Therefore by  Proposition~\ref{MatG} on $\Omega_{\Nb}(\theta)\cap\Omega_{\Nu}(\theta)$
\begin{equation}\label{w0}
w(0) \leq \frac{1}{\nU} \left[\left(1-\frac{\nU-1}{p}\right)^2  + \left(2M(\theta)-1-2\frac{\nU-1}{p}\right) \left(\xi(\theta)+\frac{\nU-1}{p}\right) \right].
\end{equation}
But under \eqref{condmtheta}, one has that
$$\xi(\theta)\leq 2 \kappa\frac{\theta}{\sqrt{p}}$$
and
$$M(\theta) \leq K\theta,$$
for $K$ an absolute constant large enough. Moreover,   under \eqref{condmtheta}, we observe that
$$ \frac{\theta}{\sqrt{p}} \leq \frac{\nU}{p} \leq 1/\theta \leq 1.$$
This gives
$$w(0)\leq \frac{1}{m}+ \square\frac{\theta}{p}+ \square \frac{\theta^2}{m\sqrt{p}} \leq \frac{1}{m}+ \square\frac{\theta}{p},$$
which gives the result since \eqref{condmtheta} holds.

Similarly, by using \eqref{Unot0}, for $d\not =0$, on $\Omega_{\Nb}(\theta)\cap\Omega_{\Nu}(\theta)$,
\begin{align}
\nU^2w(d) =& \sum_u \Nb(u)^2 \Nb(u+d)-2\frac{\nU-1}{p} \sum_u\Nb(u)
              \Nb(u+d) + \left (\frac{\nU-1}{p}\right )^2
              \sum_u\Nb(u)\nonumber\\ 
\leq & \left (M(\theta) -2 \frac{\nU-1}{p}\right ) \left( \Nu(d)+
        \frac{\nU (\nU-1)}{p} \right ) + \nU \left (\frac{\nU-1}{p}\right
        )^2\nonumber\\ 
\leq & \nU\left(M(\theta) -2 \frac{\nU-1}{p}\right ) \left(
        \xi(\theta)+ \frac{(\nU-1)}{p} \right ) + \nU \left
        (\frac{\nU-1}{p}\right )^2\nonumber.
\end{align}
The same simplifications lead to the upper bound for $w(d)$.

For the lower bounds, remark that
by the right hand side of \eqref{condmtheta}, $(\nU-1)p^{-1}<1/2$.
Therefore 
$$(\Nb(u)-(\nU-1)p^{-1})^2\geq (1-(\nU-1)p^{-1})^2,$$
for all $\Nb(u)\geq 1$ and therefore
$$w(0) \geq\frac{ (1-(\nU-1)p^{-1})^2}{\nU^2} \sum_{u/\Nb(u)\geq 1} \Nb(u) = \frac{ (1-(\nU-1)p^{-1})^2}{\nU}\geq \frac{1}{4\nU}.$$
If \eqref{condmtheta} is true,
\begin{align}
 \nU/5 \geq& \max(2\kappa,1) \theta \sqrt{p}\\
\geq& \kappa\theta \sqrt{p} + \kappa  \theta p p^{-1/2}\\
\geq& \kappa \theta \sqrt{p} + \kappa  \theta^2 \frac{5p}{\nU} \\
\geq& \kappa (\theta \sqrt{p}+\theta^2 p \nU^{-1}) = p \xi(\theta).
\end{align}
But, by using \eqref{Unot0}, on $\Omega_{\Nu}(\theta)$, since $(\nU-1)p^{-1}<1/3$,
\begin{align*}
\nU^2 w(d)  \geq &  \sum_u \Nb(u)^2 \Nb(u+d) - 2 \frac{\nU-1}{p}
                    \sum_u \Nb(u) \Nb(u+d) + \nU \left
                    (\frac{\nU-1}{p}\right )^2\\ 
\geq & \left( 1 - 2 \frac{\nU-1}{p}\right ) \Nu(d) + \frac{\nU(\nU-1)}{p}
       \left( 1 -  \frac{\nU-1}{p}\right )\\
\geq & -\Bigg| 1 - 2 \frac{\nU-1}{p} \Bigg| \nU \xi(\theta) + \frac{\nU(\nU-1)}{p} \left( 1 -  \frac{\nU-1}{p}\right ) \\
\geq & \square \nU \left( \frac{\nU}{4p}-\xi(\theta)\right)\\
\geq & \square\frac{\nU^2}{20p}.
\end{align*}
\end{proof}

\subsubsection{Control of the constant weight (proof of Proposition \ref{upboundd})}
Let $\theta>1$. First remark that \eqref{unC} with $R=\ones$ gives that with probability larger than $1-e^{-\theta}$
$$\bar{Y} \leq \square \left[\norm{x^*}_1+\frac{\theta}{m}\right].$$
Moreover using Lemma \ref{Nb}, on $\Omega_\Nb(\theta)$,
$$B\leq \square \left[\sqrt{\frac{\theta}{mp}}+\frac{\theta}{m}\right].$$
Combining this with Lemma \ref{bornesupW} and taking $\theta=2\log(p)$ gives
\begin{align*}
\hd^2 \leq & \square \left[ W \theta \left(\norm{x^*}_1+\frac{\theta}{m}\right) + \theta^2 \left(\frac{\theta}{mp}+\frac{\theta^2}{m^2}\right)\right]\\
\leq & \square \left[ \left(\frac{\theta}{p}+\frac{\theta^2}{m}\right) \theta \left(\norm{x^*}_1+\frac{\theta}{m}\right) + \theta^2 \left(\frac{\theta}{mp}+\frac{\theta^2}{m^2}\right)\right],
\end{align*}
which implies the result.
\subsubsection{Control of the non-constant weights (proof of Proposition \ref{updownbounddk})}

Let $\theta=2\log(p)$ (since $p\geq 5,$ this  ensures that $\theta>3$). Applying \eqref{quatreC} of Lemma \ref{concPoisGen} to $(R_k)_2$ gives that with probability larger than $1-pe^{-\theta}$,
$$\hat{v}_k = \ave{V_k,Y} \leq \square \left[v_k + B^2\theta\right].$$
But since 
$$v_k=\sum_u w(k-u) x^*_u,$$
one can use Lemma \ref{bornesw} (by choosing $\kappa_1, \kappa_2$ such that \eqref{condmtheta} holds) to show that
\begin{align*}
\hd_k^2  \leq & \square \left[v_k\theta+B^2\theta^2\right] \\
 \leq &  \square \left[ \frac{x^*_k\theta}{m} + \sum_{u\not = k} x^*_u \frac{\theta^2}{p} + \frac{\theta^4}{m^2}\right],
\end{align*}
since $\theta^2/m \geq \theta / p$.

For the lower bound, the arguments are similar
\begin{align*}
\hd_k^2  \geq & \square \left[v_k\theta+B^2\theta^2\right] \\
 \geq &  \square \left[ \frac{x^*_k\theta}{m} + \sum_{u\not = k} x^*_u \frac{\theta}{p} + B^2 \theta^2\right],
\end{align*}
but since $(m-1)/p <1/3$ and since there is at least one $\Nb(u)\geq 1$ for some $u$, then $B>2/3 m^{-1}$. Hence
$$\hd_k^2 \geq \square \left[ \frac{x^*_k\theta}{m} + \sum_{u\not = k} x^*_u \frac{\theta}{p} + \frac{\theta^2}{m^2}\right],$$
which gives the result.

\subsection{Proof of Proposition~\ref{prop:OLS2}}
\begin{proof}{Proposition~\ref{prop:OLS2}}
The first part of the proof follows the proof of
Proposition~\ref{prop:OLS}, yielding
$$
\| \xls-x^*\|_2^2=\|\tG_{S^*}^{-1}\tA_{S^*}^H(\tY -\tA x^*)\|_2^2.
$$
By Proposition~\ref{GxiConv}, the maximum eigenvalue of
$\tG^{-1}_{S^*}$ is bounded by $\frac{1}{1-\sqrt{s^* \xi}} \le c^{\prime}$
  under \eqref{sparse-conv-L}, yielding the result.
\end{proof}

\end{document}